\pgfplotsset{compat=1.15}
\author{NN}
\title{Reverse Alexandrov--Fenchel inequalities for zonoids}
\newcommand{\R}{\mathbb{R}}
\newcommand{\N}{\mathbb{N}}
\newcommand{\BS}{\mathbb{S}}
\newcommand{\cK}{\mathcal{K}}
\DeclareMathOperator{\lin}{lin}
\DeclareMathOperator{\spt}{supp}
\DeclareMathOperator{\MG}{G}
\DeclareMathOperator{\MA}{A}
\newtheorem{lemma}{Lemma}[section]
\newtheorem{theo}[lemma]{Theorem}
\newtheorem{conj}[lemma]{Conjecture}
\newtheorem{prop}[lemma]{Proposition}
\newtheorem{remark}[lemma]{Remark}
\newtheorem{remarks}[lemma]{Remarks}
\begin{document}

\author{ K\'aroly J. B\"or\"oczky\footnotemark[1], Daniel Hug\footnotemark[2]}
\date{}
\renewcommand{\thefootnote}{\fnsymbol{footnote}}

\footnotetext[1]{Alfr\'ed R\'enyi Institute of Mathematics, Re\'altanoda u.~13-15, H-1053 Budapest, Hungary. \newline
Central European University,
Nador utca 9, H-1051 Budapest, Hungary.
 \newline
E-mail: boroczky.karoly.j@renyi.mta.hu}

\footnotetext[2]{Karlsruhe Institute of Technology (KIT), D-76128 Karlsruhe, Germany. E-mail: daniel.hug@kit.edu}

\maketitle

\begin{abstract}
The Alexandrov--Fenchel inequality bounds from below the square of the mixed
volume $V(K_1,K_2,K_3,\ldots,K_n)$ of convex bodies $K_1,\ldots,K_n$ in $\R^n$ by the  product of the mixed volumes $V(K_1,K_1,K_3,\ldots,K_n)$ and $V(K_2,K_2,K_3,\ldots,K_n)$. As a consequence, for integers $\alpha_1,\ldots,\alpha_m\in\N$ with $\alpha_1+\cdots+\alpha_m=n$ the product   $V_n(K_1)^{\frac{\alpha_1}{n}}\cdots V_n(K_m)^{\frac{\alpha_m}{n}} $ of suitable powers of the volumes $V_n(K_i)$ of the convex bodies $K_i$, $i=1,\ldots,m$, is a lower bound for the
mixed volume $V(K_1[\alpha_1],\ldots,K_m[\alpha_m])$, where $\alpha_i$ is the multiplicity with which $K_i$ appears in the mixed volume. It has been conjectured by Ulrich Betke and Wolfgang Weil that there is a reverse inequality, that is, a sharp upper bound for the
mixed volume $V(K_1[\alpha_1],\ldots,K_m[\alpha_m])$ in terms of the product of the intrinsic volumes $V_{\alpha_i}(K_i)$, for $i=1,\ldots,m$. The case where $m=2$, $\alpha_1=1$, $\alpha_2=n-1$ has recently been settled by the present authors (2020). The case where $m=3$, $\alpha_1=\alpha_2=1$, $\alpha_3=n-2$ has been treated by Artstein-Avidan, Florentin, Ostrover (2014) under the assumption that $K_2$ is a zonoid and $K_3$ is the Euclidean unit ball. The case where $\alpha_2=\cdots=\alpha_m=1$, $K_1$ is the unit ball and $K_2,\ldots,K_m$ are zonoids has been considered by Hug, Schneider (2011).
Here we substantially generalize these previous contributions, in cases where most of the bodies are zonoids, and thus we provide further evidence supporting the conjectured reverse Alexandrov--Fenchel inequality. The equality cases in all considered inequalities are characterized. More generally, stronger stability results are established as well.

\smallskip
{\bf Keywords}. {Geometric inequality, Brunn--Minkowski theory, Alexandrov--Fenchel  inequality, mixed volume, intrinsic volume, zonoid, stability result}

\smallskip
{\bf MSC}. Primary  52A39, 52A40, 52A10; Secondary 52A25, 52A38.
\end{abstract}

\section{Introduction}
Mixed volumes  in Euclidean space $\R^n$ are a cornerstone of the Brunn--Minkowski theory of convex bodies. They arise in a natural way as the coefficients of the polynomial expansion of the volume of a Minkowski combination of convex bodies, thus providing a far reaching generalization of the basic Steiner formula. Let $K_1,\ldots,K_n$ be (non-empty) compact convex sets (convex bodies) in $\R^n$ and write $\cK^n$ for the space of convex bodies in $\R^n$. By polarization, the mixed volume $V(K_1,\ldots,K_n)$ of $K_1,\ldots,K_n\in \cK^n$ can be defined explicitly by
$$
V(K_1,\ldots,K_n)=\frac{1}{n!}\sum_{k=1}^n(-1)^{n+k}\sum_{1\le i_1<\cdots<i_k\le n}
V_n(K_{i_1}+\cdots+K_{i_k}),
$$
where $V_n$ denotes the volume functional.
In the following, we write
$$
V(K_1[\alpha_1],\ldots,K_m[\alpha_m])=V(K_1,\ldots,K_1,\ldots,K_m,\ldots,K_m)
$$
for the mixed volume of $\alpha_i$ copies of $K_i$, $i=1,\ldots,m$, where the integers $\alpha_1,\ldots,\alpha_m\in \N_0$ satisfy $\alpha_1+\cdots+\alpha_m=n$ (if $\alpha_i=0$, then $K_i$ is omitted). We refer to \cite[Chapter 5]{Sch14} (see also \cite[Chapter 3.3]{HW2020}) for an introduction to mixed volumes, a detailed study of their properties and further references.

As functionals of convex bodies, mixed volumes provide far reaching conceptual generalizations of basic
functionals such as surface area and mean width or other intrinsic volumes. The explicit connection to the $i$th intrinsic volume $V_i$ in $\R^n$ is given by the relation
$$
V_i(K)=\frac{\binom{n}{i}}{\kappa_{n-i}}V(K [i],B^n[n-i]),\quad i\in\{0,\ldots,n\},
$$
where $K\subset\R^n$ is a convex body, $B^n$ denotes the Euclidean unit ball of $\R^n$, and $\kappa_j=\pi^{\frac{j}{2}}/\Gamma(\frac{j}{2}+1)$ is  the $j$-dimensional volume of the $j$-dimensional unit ball in $\R^j$. The intrinsic volumes can also be introduced independently of their connection to mixed volumes. For another approach, let $K$ be a convex body in $\R^n$.  We write $K|A$ for the orthogonal projection of
 $K$ onto an affine subspace $A$ of $\R^n$. If $\MG(n,i)$ is the linear Grassmannian of $i$-dimensional linear subspaces of $\R^n$ and  $\nu_{i}$ denotes the Haar probability measure on $\MG(n,i)$, then
$$
V_i(K)=\binom{n}{i}^{-1}\frac{\kappa_i\kappa_{n-i}}{\kappa_n}
\int_{\MG(n,i)}V_i(C|L)\,\nu_{i}(dL),\quad i\in\{0,\ldots,n\}.
$$
In addition, we have $V_0(K)=1$, $V_n(K)=V(K[n])=V(K,\ldots,K)=:V(K)$ is the volume (the $n$-dimensional Lebesgue measure) of $K$, and $V_i(K)$ is the $i$-dimensional Lebesgue measure $\lambda_i(K)$ of $K$ if $\dim(K)\leq i$.

 Mixed volumes  exhibit deep and surprising connections and have applications to seemingly unrelated subjects such as combinatorics, algebraic geometry or probability.
One of the fundamental results for mixed volumes is the Alexandrov--Fenchel inequality which states that
$$
V(K_1,K_2,K_3,\ldots,K_n)^2\ge V(K_1[2],K_3,\ldots,K_n)\cdot V(K_2[2],K_3,\ldots,K_n).
$$
Various approaches to this inequality have been developed (see \cite{Sch14,HW2020,CLMS,SvH,Wang} and the literature cited there). However, despite considerable effort (see Schneider \cite{Sch90a,Sch14}) and substantial recent progress (see Shenfeld, van Handel \cite{SvH,SvH2,SvH3}), the confirmation of the  conjectured classification of all equality cases by Schneider \cite{Sch85,Schneider1994a,Schneider1994b} remains a challenging open problem.
It follows from the Alexandrov--Fenchel inequality that if $K_1,\ldots,K_m$  are compact convex sets in $\R^n$  and $\alpha_1,\ldots,\alpha_m\in\N$ satisfy $\alpha_1+\cdots+\alpha_m=n$, then
$$
V(K_1[\alpha_1],\ldots,K_m[\alpha_m])\geq V_n(K_1)^{\frac{\alpha_1}n}\cdots  V_n(K_m)^{\frac{\alpha_m}n}.
$$

As a reverse counterpart of the Alexandrov--Fenchel inequality, we state the following conjecture.

\begin{conj}
\label{ReverseAlexandrovFenchel}
 If $K_1,\ldots,K_m$, $m\leq n$, are compact convex sets in $\R^n$,  and $\alpha_1,\ldots,\alpha_m\in\N$ satisfy
$\alpha_1+\cdots+\alpha_m=n$, then
$$
\binom{n }{ \alpha_1,\ldots, \alpha_m}
V(K_1[\alpha_1],\ldots,K_m[\alpha_m])\leq V_{\alpha_1}(K_1)\cdots  V_{\alpha_m}(K_m).
$$
If $\dim(K_i)\ge \alpha_i$ for $i=1,\ldots,m$, then equality holds if and only if
$\dim(K_i)=\alpha_i$ for $i=1,\ldots,m$ and the affine hulls of $K_1,\ldots,K_m$ are pairwise orthogonal.
\end{conj}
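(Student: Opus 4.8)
The plan is to establish the inequality, together with its equality cases, first in the situation where all the bodies $K_i$ are zonoids, which is where the method is sharpest and where the present paper (together with some mixed cases and stability versions) makes progress; the full conjecture for arbitrary convex bodies I expect to be beyond reach. Recall that every zonoid $Z$, after a suitable translation, has support function $h_Z(u)=\tfrac12\int_{\BS^{n-1}}|\langle u,v\rangle|\,\rho(dv)$ for an even finite Borel measure $\rho$ on $\BS^{n-1}$, its \emph{generating measure}. By multilinearity and continuity of mixed volumes, this representation yields the classical determinant formulas: writing $\nabla_k(w_1,\dots,w_k)$ for the $k$-dimensional volume of the parallelepiped spanned by $w_1,\dots,w_k$, if $Z_i$ has generating measure $\rho_i$ then
$$
V(Z_1[\alpha_1],\dots,Z_m[\alpha_m])=\frac{1}{n!}\int\!\cdots\!\int \bigl|\det\bigl(v_{1,1},\dots,v_{1,\alpha_1},\dots,v_{m,1},\dots,v_{m,\alpha_m}\bigr)\bigr|\,\prod_{i=1}^m\prod_{k=1}^{\alpha_i}\rho_i(dv_{i,k}),
$$
and
$$
V_{\alpha_i}(Z_i)=\frac{1}{\alpha_i!}\int\!\cdots\!\int \nabla_{\alpha_i}(v_{i,1},\dots,v_{i,\alpha_i})\,\prod_{k=1}^{\alpha_i}\rho_i(dv_{i,k}).
$$
Since $\binom{n}{\alpha_1,\dots,\alpha_m}=n!/(\alpha_1!\cdots\alpha_m!)$, the multinomial coefficient cancels exactly against the factorials, and both sides of the desired inequality become integrals against the same product measure $\rho_1^{\alpha_1}\otimes\cdots\otimes\rho_m^{\alpha_m}$ with the same prefactor $1/(\alpha_1!\cdots\alpha_m!)$. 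Hence it suffices to prove the pointwise estimate
$$
\bigl|\det(v_{1,1},\dots,v_{m,\alpha_m})\bigr|\le\prod_{i=1}^m\nabla_{\alpha_i}(v_{i,1},\dots,v_{i,\alpha_i})
$$
for all choices of the vectors.

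This pointwise inequality is elementary. The left-hand side is the $n$-volume of the parallelepiped spanned by all $n$ vectors; compute it by the base-times-height principle applied successively to the $m$ blocks. The first block contributes precisely $\nabla_{\alpha_1}(v_{1,1},\dots,v_{1,\alpha_1})$, and each subsequent block $i$ contributes the $\alpha_i$-volume of the parallelepiped spanned by the orthogonal projections of $v_{i,1},\dots,v_{i,\alpha_i}$ onto the orthogonal complement of the span of the preceding blocks, which is at most $\nabla_{\alpha_i}(v_{i,1},\dots,v_{i,\alpha_i})$ because orthogonal projection does not increase volume. Taking the product of the $m$ estimates proves the claim, and equality holds if and only if every block is linearly independent and each projection preserved volume, that is, if and only if the linear hulls of the $m$ blocks are pairwise orthogonal.

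For the equality characterization in the zonoidal case, integrating the pointwise inequality shows that equality forces the pointwise equality for $(\rho_1^{\alpha_1}\otimes\cdots\otimes\rho_m^{\alpha_m})$-almost every tuple whose blocks are all linearly independent. Assuming $\dim Z_i\ge\alpha_i$, the support of $\rho_i$ linearly spans the direction space $L_i$ of $Z_i$, with $\dim L_i=\dim Z_i\ge\alpha_i$, so there is a positive-measure set of linearly independent $\alpha_i$-tuples drawn from $\spt\rho_i$. A density argument then promotes the almost-everywhere orthogonality of block spans to the statement that $v\perp w$ whenever $v\in\spt\rho_i$ and $w\in\spt\rho_j$ with $i\neq j$: if some such $v,w$ were not orthogonal, one builds a positive-measure family of fully linearly independent tuples with $v$ in block $i$ and $w$ in block $j$, contradicting the almost-everywhere orthogonality. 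Thus $L_1,\dots,L_m$ are pairwise orthogonal, whence $\dim L_1+\cdots+\dim L_m\le n=\alpha_1+\cdots+\alpha_m$; combined with $\dim L_i\ge\alpha_i$ this forces $\dim Z_i=\dim L_i=\alpha_i$ for all $i$ and the affine hulls of the $Z_i$ (translates of the $L_i$) to be pairwise orthogonal. The reverse implication — that $\dim K_i=\alpha_i$ together with pairwise orthogonal affine hulls yields equality, with both sides equal to $\lambda_{\alpha_1}(K_1)\cdots\lambda_{\alpha_m}(K_m)$ — is valid for arbitrary convex bodies and is the standard product formula for mixed volumes of bodies contained in complementary orthogonal subspaces.

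The main obstacle is that the determinant representation underlying the entire argument is available only for zonoids, and no analogous pointwise reduction is known for general convex bodies; this is precisely why Conjecture~\ref{ReverseAlexandrovFenchel} is open in general. To extend the result to the situation where most of the $K_i$ are zonoids, the strategy is to expand only the zonoidal bodies into their generating segments and to absorb each such segment into the remaining, non-zonoidal bodies via orthogonal projection: using an identity of the form $V(K[\alpha],[0,u_1],\dots,[0,u_{n-\alpha}])=c\,\nabla_{n-\alpha}(u_1,\dots,u_{n-\alpha})\,V_\alpha\bigl(K\,|\,(\lin\{u_1,\dots,u_{n-\alpha}\})^\perp\bigr)$ together with the monotonicity $V_\alpha(K|W)\le V_\alpha(K)$ under projection, one lowers the number of non-zonoidal factors step by step, ultimately reducing to the previously settled low-complexity cases (for instance $m=2$ with $\alpha_1=1$, or the cases involving the Euclidean ball). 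Carrying the equality discussion through these projection steps, and — for the announced stability statements — replacing the pointwise determinant inequality by a quantitative version that bounds the deficit $\prod_i\nabla_{\alpha_i}-|\det|$ from below in terms of the angles between the block spans, is where most of the technical effort will go.
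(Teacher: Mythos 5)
The statement you address is formulated in the paper as a conjecture and is verified there only under zonoidal hypotheses (Theorems~\ref{ReverseAlexandrovFenchelKZ} and~\ref{ReverseAlexandrovFenchelKKZ}); you correctly flag this and do not claim a full proof, so your proposal and the paper are in the same position with respect to the general statement. For the all-zonoid case your argument is correct and is essentially the paper's. Your pointwise block-Hadamard bound $|\det(v_{1,1},\ldots,v_{m,\alpha_m})|\le\prod_{i=1}^m\nabla_{\alpha_i}(v_{i,1},\ldots,v_{i,\alpha_i})$ is exactly the factorization~\eqref{eqC} combined with the bracket bound~\eqref{Number1}, and integrating it against the product of generating measures (after matching conventions: your $\rho_i$ is $2\varrho(Z_i,\cdot)$, which absorbs the factor $2^n$ in the paper's determinant formula) reproduces the estimate~\eqref{zonolate}, i.e.\ the case $\gamma=\beta=0$ of Theorem~\ref{ReverseAlexandrovFenchelKZ}. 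Your support/density argument for equality likewise matches the paper's, which runs the same reasoning at the level of $\spt\varrho_{(\alpha_i)}(Z_i,\cdot)$ on the Grassmannian.

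Your sketch for the mixed case contains the right two ingredients --- the expansion of the zonoidal factors via the mixed-volume formula and the projection monotonicity $V_\alpha(K|W)\le V_\alpha(K)$ --- and this is exactly what~\eqref{metaref1} together with Lemma~\ref{proj} deliver. The one difference worth flagging is that the paper does not peel off non-zonoidal factors step by step: it performs a single expansion and then bounds the resulting $\beta$-dimensional mixed volume of the projected non-zonoidal bodies in one stroke, by Lemma~\ref{proj} when a single body $K$ with multiplicity $\gamma$ (plus powers of $B^n$) carries all the non-zonoidal mass (Theorem~\ref{ReverseAlexandrovFenchelKZ}), and by the reverse Minkowski inequality of~\cite{BH} applied to the projected pair when there are two arbitrary bodies with one of multiplicity one (Theorem~\ref{ReverseAlexandrovFenchelKKZ}). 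The equality discussion is then read off directly from the chain of estimates, rather than by descent on the number of non-zonoidal factors, and the stability versions indeed require a quantitative form of your determinant inequality, which the paper obtains via Proposition~\ref{projstab} rather than a lower bound on $\prod_i\nabla_{\alpha_i}-|\det|$ itself.
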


We note that $V(K_1[\alpha_1],\ldots,K_m[\alpha_m])>0$ if and only if there exist
$\alpha_i$-dimensional convex sets $C_i\subset K_i$ for $i=1,\ldots,m$ such that
$C_1+\cdots+C_m$ is $n$-dimensional.

\medskip

Conjecture \ref{ReverseAlexandrovFenchel} was suggested by  Betke and Weil \cite[Section 6]{BW} in a less explicit form and verified in the plane (but see also \cite{Artstein,Bartha} for related recent work). For $n\geq 3$, the conjecture is proved by B\"or\"oczky and Hug \cite{BH} in the particular case where $m=2$, $\alpha_1=1$ and $\alpha_2=n-1$, which provides a reverse form of the   Minkowski inequality.

\begin{theo}[Reverse Minkowski Inequality, \cite{BH}]
\label{ReverseMinkowski}
 If $K_1,K_2$ are compact convex sets in $\R^n$, then
$$
n V(K_1[1],K_2[n-1])\leq V_1(K_1)\cdot  V_{n-1}(K_2).
$$
If $\dim(K_1)\ge 1$ and $\dim(K_2)\ge n-1$, then equality holds if and only if
$\dim(K_1)=1$,  $\dim(K_2)=n-1$ and the affine hulls of $K_1,K_2$ are pairwise orthogonal.
\end{theo}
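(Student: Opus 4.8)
The plan is to reduce the inequality to a pointwise bound on the support function $h_{K_1}$, after moving $K_1$ to a good position. Recall the representation $n\,V(K_1[1],K_2[n-1])=\int_{\BS^{n-1}}h_{K_1}(u)\,S_{n-1}(K_2,du)$, where $S_{n-1}(K_2,\cdot)$ is the surface area measure of $K_2$; this measure satisfies $\int_{\BS^{n-1}}u\,S_{n-1}(K_2,du)=o$ and $S_{n-1}(K_2,\BS^{n-1})=2V_{n-1}(K_2)$ whenever $\dime K_2\ge n-1$ (and $S_{n-1}(K_2,\cdot)=0$, so both sides of the claimed inequality vanish, if $\dime K_2\le n-2$). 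Since also $V_1(K_1)=\kappa_{n-1}^{-1}\int_{\BS^{n-1}}h_{K_1}(u)\,du$, and since $V(K_1[1],K_2[n-1])$ is translation invariant in $K_1$, the theorem will follow at once from the pointwise estimate $h_{K_1}(u)\le\tfrac12 V_1(K_1)$ for all $u\in\BS^{n-1}$, valid after translating $K_1$ so that the center of its smallest enclosing ball is the origin.

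The heart of the matter is thus the following lemma: \emph{for every convex body $K\subset\R^n$ the circumradius $R(K)$ satisfies $R(K)\le\tfrac12 V_1(K)$, with equality if and only if $K$ is a segment.} To prove it, translate $K$ so that the circumcenter is $o$; then $K\subset R\,B^n$ with $R=R(K)$, and there are contact points $p_0,\dots,p_k\in K\cap R\,\BS^{n-1}$ and weights $\lambda_0,\dots,\lambda_k>0$ with $\sum_i\lambda_i=1$ and $\sum_i\lambda_ip_i=o$ (the standard characterization of the minimal enclosing ball). Set $P=\mathrm{conv}\{p_0,\dots,p_k\}\subseteq K$ and $\hat p_i=p_i/R$. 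For $u\in\BS^{n-1}$ let $\Lambda^+(u),\Lambda^-(u)$ denote the total weight of the indices with $\langle\hat p_i,u\rangle>0$, resp.\ $<0$, and put $S(u)=\sum_i\lambda_i\langle\hat p_i,u\rangle^+=\sum_i\lambda_i\langle\hat p_i,u\rangle^-$ (the two sums agree because $\sum_i\lambda_i\langle\hat p_i,u\rangle=0$). Bounding $\max_i\langle\hat p_i,u\rangle$ and $-\min_i\langle\hat p_i,u\rangle$ below by the corresponding weighted averages, and then applying the arithmetic--harmonic mean inequality together with $\Lambda^+(u)+\Lambda^-(u)\le1$, we obtain for the width $w_P(u)=h_P(u)+h_P(-u)$
$$\frac1R\,w_P(u)=\max_i\langle\hat p_i,u\rangle-\min_i\langle\hat p_i,u\rangle\ \ge\ S(u)\Bigl(\tfrac1{\Lambda^+(u)}+\tfrac1{\Lambda^-(u)}\Bigr)\ \ge\ 4\,S(u).$$
Integrating over $\BS^{n-1}$, and using $\int_{\BS^{n-1}}\langle\hat p_i,u\rangle^+\,du=\kappa_{n-1}$ (whence $\int_{\BS^{n-1}}S(u)\,du=\kappa_{n-1}$) and $\int_{\BS^{n-1}}w_P(u)\,du=2\kappa_{n-1}V_1(P)$, we get $2\kappa_{n-1}V_1(P)\ge4R\kappa_{n-1}$, so $V_1(K)\ge V_1(P)\ge2R$ by monotonicity of $V_1$. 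Equality forces $K=P$ (strict monotonicity of $V_1$) and equality in all the estimates above for a.e.\ $u$; a short argument then shows this requires $k=1$, i.e.\ $p_1=-p_0$ and $K=[p_0,-p_0]$ is a segment.

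Granting the lemma, translate $K_1$ so that its circumcenter is $o$; then $h_{K_1}(u)\le R(K_1)$ for all $u$, hence
$$n\,V(K_1[1],K_2[n-1])=\int_{\BS^{n-1}}h_{K_1}\,S_{n-1}(K_2,\cdot)\ \le\ R(K_1)\,S_{n-1}(K_2,\BS^{n-1})\ =\ 2R(K_1)\,V_{n-1}(K_2)\ \le\ V_1(K_1)\,V_{n-1}(K_2).$$
For the equality case, assume $\dime K_1\ge1$, $\dime K_2\ge n-1$, and equality throughout. The lemma and $2R(K_1)=V_1(K_1)$ force $K_1$ to be a segment; after the translation above, $K_1=[-be,be]$ for a unit $e$ and $b>0$, with $R(K_1)=b$ and $\dime K_1=1$. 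Equality in the first inequality then forces $h_{K_1}(u)=b$ for $S_{n-1}(K_2,\cdot)$-a.e.\ $u$; since $h_{K_1}(u)=b\,|\langle e,u\rangle|$ equals $b$ only at $u=\pm e$, the measure $S_{n-1}(K_2,\cdot)$ is concentrated on $\{e,-e\}$, and combined with $\int u\,S_{n-1}(K_2,du)=o$ this yields $S_{n-1}(K_2,\cdot)=\lambda_{n-1}(K_2)(\delta_e+\delta_{-e})$, i.e.\ $\dime K_2=n-1$ with $\mathrm{aff}\,K_2\perp e$, so the affine hulls of $K_1$ and $K_2$ are orthogonal; conversely such configurations give equality by a direct computation. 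In particular equality is impossible when $\dime K_2=n$, as claimed.

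The main obstacle is the lemma $R(K)\le\tfrac12 V_1(K)$, and especially the uniqueness of the extremal body: the naive estimates (e.g.\ $R(K)\le\mathrm{diam}(K)\le V_1(K)$, or bounding $h_P(u)$ below by a single $\langle p_i,u\rangle^+$) all lose a factor $2$, and it is precisely the balancing identity $\sum_i\lambda_ip_i=o$ at the circumcenter — used through the two-sided weighted averages and the arithmetic--harmonic mean inequality — that recovers the sharp constant and pins down the segment as the only equality case. Everything else is bookkeeping with the standard integral representations of $V(K_1[1],K_2[n-1])$ and $V_1$.
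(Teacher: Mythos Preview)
The paper does not give its own proof of Theorem~\ref{ReverseMinkowski}; it is quoted from \cite{BH} and then used as a black box in the proof of Theorem~\ref{ReverseAlexandrovFenchelKKZ}. So there is no ``paper's proof'' to compare against here.

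That said, your argument is correct and is essentially the same strategy as in \cite{BH}: reduce the mixed-volume inequality, via the representation $nV(K_1,K_2[n-1])=\int h_{K_1}\,dS_{n-1}(K_2,\cdot)$ and the identity $S_{n-1}(K_2,\BS^{n-1})=2V_{n-1}(K_2)$, to the pointwise bound $h_{K_1}\le R(K_1)$ (after centering at the circumcenter) together with the key lemma $2R(K)\le V_1(K)$ with equality only for segments. Your proof of the lemma via the contact-point decomposition $\sum\lambda_ip_i=o$, the two-sided weighted-average bound for the width, and the AM--HM step is clean and yields the sharp constant; the equality analysis (forcing $k=1$ and hence a segment) is routine once one observes that two non-antipodal, non-equal contact points create an open set of directions where the ``all positive inner products equal'' condition fails. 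The equality discussion for the main inequality (support of $S_{n-1}(K_2,\cdot)$ forced into $\{\pm e\}$, hence $\dim K_2=n-1$ and ${\rm aff}\,K_2\perp e$) is also correct. One minor remark: the boundary cases $\Lambda^\pm(u)=0$ should be noted explicitly (there $S(u)=0$ and the width bound is trivial), but this does not affect the argument.
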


Let $\langle \cdot\,,\cdot\rangle$ denote the Euclidean scalar product.
For a convex body $K$ in $\R^n$, the support function $h(K,\cdot):\R^n\to\R$ of $K$ is defined by $h(K,u)=\max\{\langle x,u\rangle:x\in K\}$ for $u\in\R^n$.
It is a convex and (positively) homogeneous function on $\R^n$, where (positive) homogeneity means that $h(K,\lambda u)=\lambda\,h(K,u)$
for $\lambda\geq 0$ and $u\in\R^n$. It is known that any convex and homogeneous function on $\R^n$ is the support function of a uniquely determined non-empty compact convex set.
We say that a compact convex set $Z$ in $\R^n$ is a zonoid if there exists
a non-trivial finite even generating Borel measure $\varrho(Z,\cdot)$ on $\BS^{n-1}$ such that
$$
h(Z,u)=\int_{\BS^{n-1}}|\langle u,v\rangle|\, \varrho(Z,dv),\qquad u\in\R^n.
$$
In fact, we do not have to assume that zonoids are centred at the origin $o$ of $\R^n$, but could also consider arbitrary translates in the following, since all relevant functionals are translation invariant.

In this paper, we verify Conjecture~\ref{ReverseAlexandrovFenchel} if all the  compact convex sets but possibly one (say $K_1$ with multiplicity $\alpha_1$) are zonoids or if $K_1,K_2$ are arbitrary compact convex sets, $K_1$ (say) has multiplicity $\alpha_1=1$, $K_2$ has arbitrary multiplicity and the remaining bodies are zonoids. The following
partial confirmation of Conjecture~\ref{ReverseAlexandrovFenchel} admits the additional inclusion of several copies of the unit ball $B^n$. Thus we obtain generalizations of partial results available in the literature (see the subsequent comments).

\begin{theo}
\label{ReverseAlexandrovFenchelKZ}
Let integers $0\le \gamma\le\beta\le n$,  $0\leq m\leq n$ and  $\alpha_1,\ldots,\alpha_m\geq 1$ with
   $\alpha_1+\cdots+\alpha_m=n-\beta$ be given. If $Z_1,\ldots,Z_{m}$ are zonoids and $K$ is a compact convex set in $\R^n$, then
\begin{align}
&\binom{n }{\gamma, \beta-\gamma,\alpha_1,\ldots, \alpha_m}
V(K[\gamma],B^n[\beta-\gamma], Z_1[\alpha_1],\ldots,Z_{m}[\alpha_{m}])\nonumber\\
&\qquad \qquad
\leq \kappa_{\beta-\gamma}V_\gamma(K) V_{\alpha_1}(Z_1)\cdots      V_{\alpha_m}(Z_m).
\label{refeq1.3}
\end{align}
If $\dim(Z_i)\geq \alpha_i$ for $i=1,\ldots,m$ and $\dim(K)\ge {\gamma}$, then
 equality holds if and only if the affine hulls of $K,Z_1,\ldots,Z_{m}$ are pairwise orthogonal.

If $\gamma=\beta$,  $\dim(Z_i)\geq \alpha_i$ for $i=1,\ldots,m$ and $\dim(K)\ge {\gamma}$, then
 equality holds if and only if the affine hulls of $K,Z_1,\ldots,Z_{m}$ are pairwise orthogonal. Then we also have $\dim(K)=\gamma$ and $\dim(Z_i)=\alpha_i$ for $i=1,\ldots,m$.
\end{theo}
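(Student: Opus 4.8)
The plan is to reduce the general statement to the already-known Reverse Minkowski Inequality (Theorem~\ref{ReverseMinkowski}) by peeling off the zonoids one at a time, using the integral representation of a zonoid's support function together with the monotonicity and multilinearity of mixed volumes. Concretely, write $h(Z_m,\cdot)=\int_{\BS^{n-1}}|\langle\cdot,v\rangle|\,\varrho(Z_m,dv)$; since $|\langle x,v\rangle|$ is the support function of the segment $S_v=[-v,v]/2$ (up to normalisation), one has $Z_m=\int S_v\,\varrho(Z_m,dv)$ in the sense of Minkowski integrals, and hence by multilinearity
\begin{equation*}
V(K[\gamma],B^n[\beta-\gamma],Z_1[\alpha_1],\ldots,Z_m[\alpha_m])
=\int_{(\BS^{n-1})^{\alpha_m}} V(K[\gamma],B^n[\beta-\gamma],Z_1[\alpha_1],\ldots,S_{v_1},\ldots,S_{v_{\alpha_m}})\,\varrho(Z_m,dv_1)\cdots\varrho(Z_m,dv_{\alpha_m}).
\end{equation*}
The key point is then a \emph{pointwise} estimate for the integrand: one shows that for fixed segments (equivalently, unit vectors $v_1,\ldots,v_{\alpha_m}$) the mixed volume involving those segments is bounded by $\tfrac{1}{\alpha_m!}$ times $\lambda_{\alpha_m}$ of the parallelepiped they span, multiplied by the mixed volume of the projections of the remaining bodies onto the orthogonal complement $L$ of $\mathrm{lin}\{v_1,\ldots,v_{\alpha_m}\}$; and then that integrating this bound over $\varrho(Z_m,\cdot)^{\alpha_m}$ and using $V_{\alpha_m}(Z_m)$ as the normalising constant (via the projection formula for intrinsic volumes, or directly via $V_{\alpha_m}(Z_m)=\binom{n}{\alpha_m}\kappa_{n-\alpha_m}^{-1}V(Z_m[\alpha_m],B^n[n-\alpha_m])$ together with the zonoid structure) recovers the factor $V_{\alpha_m}(Z_m)$ while replacing the ambient bodies by their projections. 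Iterating over $m,m-1,\ldots,2$ strips away all zonoids and leaves a mixed-volume expression of the form $V(\widetilde K[\gamma],B^{n'}[\beta-\gamma])$ in a lower-dimensional space, which after the ball is also absorbed reduces to the case $m=1$, i.e.\ essentially to Theorem~\ref{ReverseMinkowski} (or to its trivial $\gamma\in\{0,n\}$ instances).

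For the $\gamma=\beta$ case specifically, the ball $B^n$ drops out ($\beta-\gamma=0$, $\kappa_0=1$), so the claim is the pure zonoid statement
\begin{equation*}
\binom{n}{\gamma,\alpha_1,\ldots,\alpha_m}V(K[\gamma],Z_1[\alpha_1],\ldots,Z_m[\alpha_m])\le V_\gamma(K)V_{\alpha_1}(Z_1)\cdots V_{\alpha_m}(Z_m),
\end{equation*}
and here the zonoid-stripping described above is cleanest: each integration step is an identity, and the only inequality used is the single one that compares, for a fixed orthogonal decomposition, the mixed volume $V(\widetilde K[\gamma],\text{segments})$ with the product $\lambda_\gamma(\widetilde K|L)\cdot(\text{edge lengths})$. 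That single inequality is itself a consequence of the decomposition $V_n(A+B)\ge V_j(A)V_{n-j}(B)$ for $A\subset L^\perp$, $B\subset L$, i.e.\ of the orthogonality case of Alexandrov--Fenchel, with equality iff $A$ and $B$ are orthogonal. Chasing equality back through the (finitely many, but possibly after a decomposition into atoms / a limiting argument for general measures $\varrho(Z_i,\cdot)$) integration steps forces each generating measure to be concentrated on a subspace orthogonal to all the others and to $K$, and forces $\dim(Z_i)=\alpha_i$, $\dim(K)=\gamma$; this is exactly the asserted equality characterisation.

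The main obstacle, as usual in this circle of ideas, is the equality discussion rather than the inequality: the stripping procedure uses integral representations, and to conclude that equality in the integrated inequality forces equality $\varrho(Z_m,\cdot)$-almost everywhere in the pointwise bound, and then to translate ``$\varrho(Z_m,\cdot)$-a.e.\ the span of the chosen vectors is orthogonal to the rest'' into ``$\mathrm{supp}\,\varrho(Z_m,\cdot)$ lies in a fixed subspace orthogonal to the other bodies'', requires care with null sets, with the possible degeneracy $\dim Z_i>\alpha_i$ in the non-$\gamma=\beta$ range, and with the interaction between successive stripping steps (the body $K$ is replaced by a projection, so one must track that orthogonality is not lost). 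I expect this to be handled by first treating zonoids whose generating measures are finite sums of atoms (where everything is a genuine finite multilinear expansion and the equality analysis is combinatorial), and then passing to general zonoids by approximation, using the already-established equality cases of Theorem~\ref{ReverseMinkowski} as the base of the induction and a continuity/compactness argument to upgrade to the stability statement alluded to at the end of the introduction.
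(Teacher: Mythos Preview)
Your overall strategy---expand each zonoid as a superposition of segments and use the exact projection formula for mixed volumes with segment arguments---is precisely the mechanism behind formula~\eqref{genmixvol}/\eqref{metaref1} in the paper, so in that sense you are on the right track. However, two points are misidentified.

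First, the base case. After all zonoids are stripped you are left, for each choice of directions spanning a subspace $U=U_1+\cdots+U_m$, with
\[
v^{(\beta)}\bigl((K|U^\perp)[\gamma],B^\beta[\beta-\gamma]\bigr)
=\binom{\beta}{\gamma}^{-1}\kappa_{\beta-\gamma}\,V_\gamma(K|U^\perp).
\]
What you then need is \emph{not} Theorem~\ref{ReverseMinkowski}, which concerns $V(K_1[1],K_2[n-1])$ and plays no role here, but the monotonicity $V_\gamma(K|U^\perp)\le V_\gamma(K)$ together with its equality characterisation (Lemma~\ref{proj} in the paper, proved via Steiner symmetrisation and Crofton). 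Your sentence ``a consequence of the decomposition $V_n(A+B)\ge V_j(A)V_{n-j}(B)$ for $A\subset L^\perp$, $B\subset L$, with equality iff $A$ and $B$ are orthogonal'' is confused: for such $A,B$ this is already an identity, and in any case it does not yield $V_\gamma(K|U^\perp)\le V_\gamma(K)$. The second inequality the paper uses, $[U_1,\ldots,U_m]_{n-\beta}\le 1$, is what replaces your iterative stripping by a single step; in your iterative version you would instead need Lemma~\ref{proj} at every stage (to bound $V_{\alpha_i}(Z_i|L)\le V_{\alpha_i}(Z_i)$), so the missing lemma is even more central to your scheme than to the paper's.

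Second, the equality analysis. Your plan to establish equality cases by first treating atomic generating measures and then passing to the limit does not work: equality conditions are generally not preserved under approximation (you cannot deduce that the limiting bodies satisfy equality \emph{in the rigid geometric form} just because approximants do, nor can you feed information back from the limit to the approximants). The paper instead argues directly: equality in the integrated inequality forces equality $\varrho_{(\alpha_i)}(Z_i,\cdot)$-a.e.\ in the pointwise bounds $[U_1,\ldots,U_m]_{n-\beta}\le 1$ and $V_\gamma(K|U^\perp)\le V_\gamma(K)$; since these integrands are continuous in $(U_1,\ldots,U_m)$, equality holds on the \emph{supports}, and the support of $\varrho_{(\alpha_i)}(Z_i,\cdot)$ spans $\lin(Z_i)$. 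This immediately gives pairwise orthogonality without any limiting procedure.
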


\begin{remarks}{\rm
\begin{enumerate}
\item[(a)] In the special case where $\gamma=\beta$ we thus confirm the conjecture
in the case where one of the bodies is arbitrary and  the others are zonoids.
\item[(b)] Since $B^n$ is a zonoid, Theorem \ref{ReverseAlexandrovFenchelKZ} also yields that
\begin{align*}
&\binom{n }{\gamma, \beta-\gamma,\alpha_1,\ldots, \alpha_m}
V(K[\gamma],B^n[\beta-\gamma], Z_1[\alpha_1],\ldots,Z_{m}[\alpha_{m}])\\
&\qquad \qquad
\leq V_\gamma(K)V_{\beta-\gamma}(B^n) V_{\alpha_1}(Z_1)\cdots      V_{\alpha_m}(Z_m),
\end{align*}
However, since $\kappa_{\beta-\gamma}\le V_{\beta-\gamma}(B^n)$ with strict inequality unless $\beta-\gamma=n$ (see Lemma \ref{proj} below), this is weaker than \eqref{refeq1.3}.
\item[(c)] In the special case $\gamma=1$, $\beta=n-1$ and $m=\alpha_1=1$, and writing  $Z_1=Z$, Theorem \ref{ReverseAlexandrovFenchelKZ} yields
    $$
    V(K,Z,B^n[n-2])\le \frac{\kappa_{n-2}}{n(n-1)} V_1(K)V_1(Z).
    $$
    If $K,Z$ are at least one-dimensional, then equality holds if and only if $K$ and $Z$ lie in orthogonal affine subspaces.
Since $\kappa_{n-1}V_1(K)=nV(K,B^n[n-1])$, this recovers Theorem 1.2 in \cite{Artstein}.
\item[(d)] The special case where $\gamma=0$, $\beta=n-m$ and $\alpha_1=\cdots=\alpha_{m}=1$, which involves only
zonoids with multiplicity one (except for the unit ball $B^n$), has been established in \cite[Theorem 2]{HS11}.
However, the main contribution in \cite{HS11} was the proof of the inequality
\begin{equation}\label{eqzonohs}
V(Z_1,\ldots,Z_m,B^n[n-m])\ge 2^m\kappa_{n-m}
\end{equation}
for zonoids $Z_1,\ldots,Z_m$ in $\R^n$ with isotropic generating measures (which amounts to zonoids in special positions). Here equality holds for $m\ge 2$ if and only if $Z_1=\cdots=Z_m$ is a cube of side length $2$ (for $m=1$ the inequality is an equality). From a very special case of \eqref{eqzonohs}, inequalities for intrinsic volumes which are reverse to classical ones, have been derived in \cite{HS11} (see \cite{Brazitikos,PPV} for related recent investigations). Such reverse inequalities are based on an intermediate maximization or minimization of the relevant functionals over affine equivalence classes of convex bodies, as suggested for instance already in classical work by Behrend \cite{Behrend} (see also \cite[Introduction]{HS11} and \cite{Lutwak1}).
\item[(e)] Reverse inequalities obtained by an affine optimization process as indicated in (d) or involving only affine invariant functionals  (see \cite{Lutwak1,MilmanE2020} or \cite[Chapter 10]{Sch14} and the literature cited there) had a substantial
impact on the Brunn--Minkowski theory. The major breakthrough came with Ball's reverse isoperimetric inequality \cite{Ball91}. Since then, this line of research has been explored repeatedly (see, e.g., \cite{Ball91b,Ball2003,Barthe1,Barthe2,Li,Lutwak2,Lutwak3,Lutwak4,Schmuck} and the literature cited there). It should be pointed out, however, that Conjecture \ref{ReverseAlexandrovFenchel} is not concerned with an affine invariant inequality although mixed volumes are invariant with respect to the simultaneous application of a volume preserving affine map to all convex bodies involved.
\item[(f)] Motivated by the interpretation of the classical B\'ezout inequality in algebraic
geometry in terms of  mixed volumes, Saroglou, Soprunov, Zvavitch \cite{SSZ} showed that
\begin{equation}\label{SSZinequ}
V(L_1,\ldots,L_n)V(K)\le n\, V(L_1,K[n-1])V(L_2,\ldots,L_n,K)
\end{equation}
for all convex bodies $K,L_1,\ldots,L_n\subset\R^n$, by combining an inequality due to Discant (which provides a lower bound for the relative inradius of one convex body with respect to another convex body) with the monotonicity of the mixed volumes. More general inequalities in this spirit have been
found by  Xiao \cite{Xiao}. Although (some of) these inequalities turn out to be sharp, they do not imply
the reverse Minkowski inequality by special choices of $L_i,K$. For instance, the natural choice
$K=B^n$, $L_1=:K_1$,
$L_2=\cdots=L_n=:K_2$ yields
$$
V(K_1,K_2[n-1])\le  \frac{2\kappa_{n-1}}{\kappa_n} \cdot \frac{1}{n}\, V_1(K_1)\, V_{n-1}(K_2)
$$
with
$$
\frac{2\kappa_{n-1}}{\kappa_n}\sim \sqrt{\frac{2}{\pi}}\cdot \sqrt{n}.
$$
\end{enumerate}
}
\end{remarks}

In addition to Theorem \ref{ReverseAlexandrovFenchelKZ}, which provides support to the conjectured reverse Alexandrov--Fenchel inequality, we also verify the following generalization of the reverse Minkowski inequality  from \cite{BH}, stated as  Theorem~\ref{ReverseMinkowski}.

\begin{theo}
\label{ReverseAlexandrovFenchelKKZ}
  Let $K_1,\ldots,K_m$ be compact convex sets in $\R^n$, $2\le m\le n$, and let the  integers  $\alpha_1,\ldots,\alpha_m\geq 1$ satisfy
$\alpha_1+\cdots+\alpha_m=n$. If $\alpha_1=1$ and $K_3,\ldots,K_m$ are zonoids, then
\begin{equation}\label{ineq1.5}
\binom{n }{ \alpha_1,\ldots, \alpha_m}
V(K_1[\alpha_1],\ldots,K_m[\alpha_m])\leq V_{\alpha_1}(K_1)\cdots   V_{\alpha_m}(K_m).
\end{equation}
If $\dim(K_i)\ge \alpha_i$ for $i=1,\ldots,m$, then equality holds if and only if
$\dim(K_i)=\alpha_i$ for $i=1,\ldots,m$ and the affine hulls of $K_1,\ldots,K_m$ are pairwise orthogonal.
\end{theo}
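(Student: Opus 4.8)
I would argue by induction on $m$, peeling off one zonoid at a time, with the reverse Minkowski inequality (Theorem~\ref{ReverseMinkowski}) as the base case $m=2$: there $\alpha_1=1$, $\alpha_2=n-1$, no zonoid is present, and \eqref{ineq1.5} together with its equality characterization is precisely Theorem~\ref{ReverseMinkowski}. Throughout one may assume $\dim K_i\ge\alpha_i$ for all $i$, since otherwise some $\alpha_i$-dimensional $C_i\subset K_i$ as in the remark after Conjecture~\ref{ReverseAlexandrovFenchel} cannot be chosen, so both sides of \eqref{ineq1.5} vanish.

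For the inductive step let $m\ge 3$, so $K_m$ is a zonoid with a finite even generating measure $\varrho=\varrho(K_m,\cdot)$ on $\BS^{n-1}$, and $h(K_m,\cdot)=\int_{\BS^{n-1}}h([-v,v],\cdot)\,\varrho(dv)$. Since, for fixed bodies in the remaining slots, the mixed volume is obtained by integrating the support function against a mixed area measure, it is additive and positively homogeneous with respect to this integral representation; using also its symmetry and translation invariance (so that $[-v,v]$ may be replaced by the segment $[o,v]$ at the cost of a factor $2$ per slot), Fubini's theorem gives
\[
V(K_1,K_2[\alpha_2],\ldots,K_m[\alpha_m])=2^{\alpha_m}\!\!\int_{(\BS^{n-1})^{\alpha_m}}\!\!\! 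V\bigl(K_1,K_2[\alpha_2],\ldots,K_{m-1}[\alpha_{m-1}],[o,v_1],\ldots,[o,v_{\alpha_m}]\bigr)\,\varrho^{\alpha_m}(d(v_1,\ldots,v_{\alpha_m})).
\]
For $\varrho^{\alpha_m}$-almost every tuple the vectors $v_1,\ldots,v_{\alpha_m}$ are linearly independent (the remaining tuples contribute $0$, since then the segment slots force the integrand to vanish); put $W=\lin\{v_1,\ldots,v_{\alpha_m}\}$. Iterating the elementary identity $V(M_1,\ldots,M_{n-1},[o,v])=\tfrac{\|v\|}{n}\,V^{(n-1)}(M_1|v^\perp,\ldots,M_{n-1}|v^\perp)$ (with $V^{(k)}$ the mixed volume in the relevant $k$-dimensional space) and noting that the successive projection lengths multiply to the $\alpha_m$-dimensional volume $\nabla_{\alpha_m}(v_1,\ldots,v_{\alpha_m})$ of the parallelepiped spanned by $v_1,\ldots,v_{\alpha_m}$ while the divisors multiply to $n!/(n-\alpha_m)!$, one gets
\[
V\bigl(K_1,\ldots,K_{m-1}[\alpha_{m-1}],[o,v_1],\ldots,[o,v_{\alpha_m}]\bigr)=\frac{(n-\alpha_m)!}{n!}\,\nabla_{\alpha_m}(v_1,\ldots,v_{\alpha_m})\,V^{(n-\alpha_m)}\!\bigl(K_1|W^\perp,(K_2|W^\perp)[\alpha_2],\ldots,(K_{m-1}|W^\perp)[\alpha_{m-1}]\bigr).
\]
Now $K_3|W^\perp,\ldots,K_{m-1}|W^\perp$ are again zonoids, $K_1|W^\perp$ and $K_2|W^\perp$ are arbitrary compact convex sets, and the multiplicities $1,\alpha_2,\ldots,\alpha_{m-1}$ add up to $n-\alpha_m=\dim W^\perp$, so the induction hypothesis applies in $W^\perp$; combined with the monotonicity $V_{\alpha_i}(K_i|W^\perp)\le V_{\alpha_i}(K_i)$ of intrinsic volumes under projection (Lemma~\ref{proj}) it yields $\binom{n-\alpha_m}{1,\alpha_2,\ldots,\alpha_{m-1}}V^{(n-\alpha_m)}(K_1|W^\perp,\ldots)\le V_1(K_1)V_{\alpha_2}(K_2)\cdots V_{\alpha_{m-1}}(K_{m-1})$.

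It remains to integrate over $\varrho^{\alpha_m}$ and simplify. Applying the same iterated segment identity once more, with the remaining $n-\alpha_m$ slots now occupied by $B^n$, shows that $\int_{(\BS^{n-1})^{\alpha_m}}\nabla_{\alpha_m}(v_1,\ldots,v_{\alpha_m})\,\varrho^{\alpha_m}(d(v_1,\ldots,v_{\alpha_m}))$ is an explicit multiple of $V(K_m[\alpha_m],B^n[n-\alpha_m])$, which equals $\frac{\kappa_{n-\alpha_m}}{\binom{n}{\alpha_m}}V_{\alpha_m}(K_m)$ by the defining formula for intrinsic volumes quoted in the introduction. Substituting this, all powers of $2$, the factorials, and the factor $\kappa_{n-\alpha_m}$ cancel exactly, and since $\binom{n-\alpha_m}{1,\alpha_2,\ldots,\alpha_{m-1}}\binom{n}{\alpha_m}=\binom{n}{1,\alpha_2,\ldots,\alpha_m}$ one is left with $V(K_1,K_2[\alpha_2],\ldots,K_m[\alpha_m])\le\binom{n}{1,\alpha_2,\ldots,\alpha_m}^{-1}V_1(K_1)V_{\alpha_2}(K_2)\cdots V_{\alpha_m}(K_m)$, which is \eqref{ineq1.5}; this completes the induction for the inequality.

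For the equality case I would trace the equality conditions through the chain. Equality in \eqref{ineq1.5} forces, for $\varrho^{\alpha_m}$-almost every independent tuple $(v_1,\ldots,v_{\alpha_m})$ with $W=\lin\{v_1,\ldots,v_{\alpha_m}\}$, both equality in the induction hypothesis applied in $W^\perp$ — hence, by the inductive equality characterization, $\dim(K_i|W^\perp)=\alpha_i$ and pairwise orthogonal affine hulls for $i=1,\ldots,m-1$ — and equality $V_{\alpha_i}(K_i|W^\perp)=V_{\alpha_i}(K_i)$ in Lemma~\ref{proj}, which together with $\dim(K_i|W^\perp)=\alpha_i$ forces $K_i$ (suitably translated) into $W^\perp$, so $\dim K_i=\alpha_i$ and $\mathrm{aff}(K_i)\perp W$. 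Since $W$ may be any $\alpha_m$-dimensional subspace spanned by points of $\spt\varrho$, and $\lin(\spt\varrho)=\lin(K_m-K_m)$, a density argument then gives $\mathrm{aff}(K_i)\perp\lin(K_m)$ for $i\le m-1$ together with pairwise orthogonality of $\mathrm{aff}(K_1),\ldots,\mathrm{aff}(K_{m-1})$; the dimension count $1+\alpha_2+\cdots+\alpha_{m-1}+\dim K_m\le n=1+\alpha_2+\cdots+\alpha_m$ finally yields $\dim K_m=\alpha_m$. Conversely, if $\mathrm{aff}(K_1),\ldots,\mathrm{aff}(K_m)$ are pairwise orthogonal and $\dim K_i=\alpha_i$ for all $i$, then the mixed volume factorizes as $\binom{n}{1,\alpha_2,\ldots,\alpha_m}^{-1}\prod_i\lambda_{\alpha_i}(K_i)=\binom{n}{1,\alpha_2,\ldots,\alpha_m}^{-1}\prod_i V_{\alpha_i}(K_i)$, so equality holds. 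I expect the equality analysis to be the main obstacle — in particular controlling the quantifier ``for $\varrho^{\alpha_m}$-almost every $W$'' and combining it correctly with the equality case of Lemma~\ref{proj} and the dimension bookkeeping — whereas the inequality itself reduces cleanly to the reverse Minkowski inequality via projections; the remaining routine points (the interchange of mixed volume and integral, and the degenerate configurations) are handled by standard facts and by the positivity criterion recalled after Conjecture~\ref{ReverseAlexandrovFenchel}.
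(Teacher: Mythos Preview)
Your argument is correct and follows essentially the same strategy as the paper: use the zonoid integral representation of the mixed volume to project the bodies $K_1,K_2$ into the orthogonal complement of the zonoid directions, apply the reverse Minkowski inequality (Theorem~\ref{ReverseMinkowski}) there, and control the loss via Lemma~\ref{proj}. The only difference is organizational: you peel off one zonoid at a time by induction on $m$, whereas the paper removes all of $K_3,\ldots,K_m$ in a single step via the projection generating measures and formula~\eqref{zonoidinmixedvol3}, which produces an extra bracket factor $[U_1,\ldots,U_m]_{n-\beta}\le 1$ that your inductive scheme sidesteps.
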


\medskip

Stability results provide strengthened versions of geometric inequalities.  The first stability forms of the Brunn--Minkowski inequality were due to Minkowski himself
(see Groemer \cite{Gro93}).
If the distance of the convex bodies is measured in terms of the so-called Hausdorff distance, then
Diskant \cite{Dis73} and Groemer \cite{Gro88} provided close to optimal stability versions
(see Groemer \cite{Gro93}). However, the natural distance is in terms of the volume of the symmetric difference, and the
essentially optimal result is due to Figalli, Maggi, Pratelli \cite{FMP09,FMP10}.
It has been slightly improved subsequently by Segal \cite{Seg12}, and even further by Kolesnikov, Milman \cite{KoM}.
The paper Eldan, Klartag \cite{ElK14} discusses ``isomorphic" stability versions of the Brunn--Minkowski inequality.

We note that the stability of the Brunn--Minkowski inequality has been considered even if the sets may not be convex.
The case $n=1$ is clarified by Frieman (see Christ \cite{Chr12}), and the case $n=2$ by van Hintum, Spink, Tiba \cite{HST2}.
If $n\geq 3$, then partial results with essentially optimal error term are due to Barchiesi, Julin \cite{BaJ17} (when one of the sets is convex) and
to Hintum, Spink, Tiba \cite{HST} (when the two sets coincide), and a stability formula (with not optimal error term) for any pair of bounded measurable sets
 is provided by
 Figalli, Jerison \cite{FiJ17}.

Schneider  \cite{Sch90b} and Martinez-Maure \cite{MaM17} provide  stability versions of the Alexandrov--Fenchel inequality if the bodies involved have $C^2_+$ boundaries. For some additional recent related stability results, see \cite{Livshyts,BH17,BFH19,BFH21}.

\medskip

In the following, we establish stability results for Theorems~\ref{ReverseAlexandrovFenchelKZ}
and \ref{ReverseAlexandrovFenchelKKZ}. As a preparation, we need the following notation.
For integers $\alpha_1,\ldots,\alpha_m\in\N$ with
$\alpha_1+\cdots+\alpha_m=n$ and for
 linear subspaces $L_1,\ldots,L_m\subset\R^n$ with $\text{dim}(L_i)=\alpha_i$,  the bracket $[L_1,\ldots,L_m]$ is defined as
$$
[L_1,\ldots,L_m]=|\det(u_1,\ldots,u_{\alpha_1},\ldots,u_{n-\alpha_m+1},\ldots,u_n)|,
$$
where $u_1,\ldots,u_{\alpha_1}$ is an orthonormal basis of $L_1$, \ldots, $u_{n-\alpha_m+1},\ldots,u_n$ is an orthonormal basis of $L_m$. We observe that $0\leq [L_1,\ldots,L_m]\leq 1$, with $[L_1,\ldots,L_m]>0$ if and only if
$L_1,\ldots,L_m$ span $\R^n$, and $[L_1,\ldots,L_m]=1$ if and only if $L_1,\ldots,L_m$ are pairwise orthogonal.

For $i=1,\ldots,n$ we denote by $r_i(K)$ the largest radius of $i$-dimensional ball contained in $K$. In particular, $2r_1(K)$ is the diameter of $K$.

We recall that  B\"or\"oczky and Hug \cite{BH} proved a stability version of the reverse Minkowski inequality (see
Theorem~\ref{ReverseMinkowski}).
We verify various stability versions of Theorems~\ref{ReverseAlexandrovFenchelKZ}
and \ref{ReverseAlexandrovFenchelKKZ} in Sections~\ref{secstability} and \ref{sec:5}. In order to avoid technical details at this point,
here we only quote the stability version of Theorem~\ref{ReverseAlexandrovFenchelKZ} in the case where all compact convex sets are zonoids.

\begin{theo}
\label{ReverseAlexandrovFenchelZstab}
Let $\varepsilon\in[0,1]$ and $2\leq m\leq n$. Let $Z_1,\ldots,Z_m$ be zonoids in $\R^n$, and let
$\alpha_1,\ldots,\alpha_m\geq 1$ be integers with $\alpha_1+\cdots+\alpha_m=n$. If
$$
0<V_{\alpha_1}(Z_1)\cdots  V_{\alpha_m}(Z_m)\le (1+\varepsilon)
\binom{n }{ \alpha_1,\ldots, \alpha_m}
V(Z_1[\alpha_1],\ldots,Z_{\alpha_m}[\alpha_m]),
$$
then there exist  $L_i\in\MG(n,\alpha_i)$ for $i=1,\ldots,m$
  such that
$$[L_1,\ldots,L_m]\geq 1-n^{10}2^{\frac{n}{2}}\sqrt{\varepsilon}
$$
and
$$
Z_i\subset Z_i|L_i+n^{\frac{9}{2}}2^{\frac{n}{2}}r_{\alpha_i}(Z_i)\sqrt{\varepsilon}\,B^n\quad\text{for } i=1,\ldots,m.
$$
\end{theo}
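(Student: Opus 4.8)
\medskip
\noindent\textbf{Proof plan.}
The plan is to open up the inequality of Theorem~\ref{ReverseAlexandrovFenchelKZ} in its all-zonoid instance ($\gamma=\beta=0$) into an integral over $(\BS^{n-1})^n$ by means of the generating measures $\varrho_i=\varrho(Z_i,\cdot)$, and then to exploit the near-equality. First I would use the standard representations (see \cite{Sch14})
$$
\binom{n}{\alpha_1,\dots,\alpha_m}V(Z_1[\alpha_1],\dots,Z_m[\alpha_m])=\frac{2^n}{\alpha_1!\cdots\alpha_m!}\int\bigl|\det(v_{1,1},\dots,v_{m,\alpha_m})\bigr|\prod_{i=1}^m\prod_{j=1}^{\alpha_i}\varrho_i(dv_{i,j})
$$
together with $V_{\alpha_i}(Z_i)=\frac{2^{\alpha_i}}{\alpha_i!}\int\lambda_{\alpha_i}(\Pi(v_{i,1},\dots,v_{i,\alpha_i}))\,\varrho_i^{\otimes\alpha_i}$, where $\Pi(w_1,\dots,w_k)$ denotes the parallelepiped spanned by $w_1,\dots,w_k$ and $\lambda_k$ its $k$-dimensional volume. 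Introducing the finite measures $d\mu_i:=\lambda_{\alpha_i}(\Pi(v_{i,1},\dots,v_{i,\alpha_i}))\,\varrho_i^{\otimes\alpha_i}$ on $(\BS^{n-1})^{\alpha_i}$, of total mass $M_i:=\frac{\alpha_i!}{2^{\alpha_i}}V_{\alpha_i}(Z_i)$, setting $\mu:=\bigotimes_{i=1}^m\mu_i$, and writing $L_i(\vec v)=\lin\{v_{i,1},\dots,v_{i,\alpha_i}\}$, the identity $|\det(v_{1,1},\dots,v_{m,\alpha_m})|=[L_1(\vec v),\dots,L_m(\vec v)]\cdot\prod_i\lambda_{\alpha_i}(\Pi(v_{i,1},\dots,v_{i,\alpha_i}))$ (valid $\mu$-a.e., since the weight vanishes on degenerate tuples) turns the hypothesis into
$$
\int\bigl(1-[L_1(\vec v),\dots,L_m(\vec v)]\bigr)\,d\mu(\vec v)\ \le\ \varepsilon\,M,\qquad M:=\prod_{i=1}^m M_i=\mu\bigl((\BS^{n-1})^n\bigr)>0 .
$$
This is the only place where the assumption enters, and the whole statement is invariant under rescaling each $Z_i$ separately.

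The next step is a stability version of the bound $[L_1,\dots,L_m]\le1$ which, as in Theorem~\ref{ReverseAlexandrovFenchelKZ}, is Fischer's inequality $\det G\le\prod_i\det G_i$ for the block Gram matrix $G$ of $(v_{1,1},\dots,v_{m,\alpha_m})$ with diagonal blocks $G_i$ and off-diagonal blocks $B_{ij}$. I would prove that $\prod_i\det G_i-\det G$ is bounded below by a dimensional constant times $(\min_i\det G_i)\sum_{i<j}\|B_{ij}\|_{\mathrm F}^2\big/\max_i\|G_i\|_{\mathrm{op}}$ (the care lying in allowing ill-conditioned $G_i$), insert this into the previous display, and integrate out all groups except $i$ and $j$. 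Since $\|B_{ij}\|_{\mathrm F}^2=\sum_{a,b}\langle v_{i,a},v_{j,b}\rangle^2$, this would yield for every pair $i\ne j$
$$
\operatorname{tr}\bigl(\widetilde A_i\widetilde A_j\bigr)\ \le\ C(n)\,\varepsilon\,M_iM_j,\qquad \widetilde A_i:=\int v_{i,1}v_{i,1}^{\!\top}\,d\mu_i\ \succeq 0,\quad\operatorname{tr}\widetilde A_i=M_i,
$$
so the parallelepiped-weighted second moments $\widetilde A_i$ of the $\varrho_i$ are pairwise almost supported on orthogonal subspaces.

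Then I would take $L_i\in\MG(n,\alpha_i)$ to be the span of the eigenvectors of $\widetilde A_i$ belonging to its $\alpha_i$ largest eigenvalues. A rank/trace bookkeeping (using $\sum_i\alpha_i=n$ and the smallness of all $\operatorname{tr}(\widetilde A_i\widetilde A_j)$) forces the $L_i$ to be pairwise nearly orthogonal and the lowest $n-\alpha_i$ eigenvalues of $\widetilde A_i$ to be small; quantitatively I expect $\sum_{i<j}\operatorname{tr}(P_{L_i}P_{L_j})\le C'(n)\sqrt\varepsilon$ with $C'(n)\le n^{10}2^{n/2}$ (here $P_L$ is the orthogonal projection onto $L$), and since $1-[L_1,\dots,L_m]\le\sum_{i<j}\operatorname{tr}(P_{L_i}P_{L_j})$ once the right side is small, this gives the asserted bracket estimate. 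For the containment I would transfer concentration of $\widetilde A_i$ to concentration of $\varrho_i$ itself: the "tilted" tuples — one vector far from $L_i$, the other $\alpha_i-1$ lying in $L_i$ — carry $\mu$-mass at least of order $r_{\alpha_i}(Z_i)^{\alpha_i-1}$ times the $\varrho_i$-mass located at a given distance from $L_i$ (here the monotonicity of intrinsic volumes under projection, Lemma~\ref{proj}, enters through a lower bound on the relevant $(\alpha_i-1)$-dimensional intrinsic volumes of sections of $Z_i$), whence $\int_{\BS^{n-1}}\operatorname{dist}(v,L_i)\,\varrho_i(dv)\le n^{9/2}2^{n/2}r_{\alpha_i}(Z_i)\sqrt\varepsilon$. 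Finally, from $h(Z_i,u)-h(Z_i|L_i,u)=h(Z_i,u)-h(Z_i,\pi_{L_i}u)\le|u|\int_{\BS^{n-1}}\operatorname{dist}(v,L_i)\,\varrho_i(dv)$ for all $u\in\R^n$ one reads off $Z_i\subset Z_i|L_i+n^{9/2}2^{n/2}r_{\alpha_i}(Z_i)\sqrt\varepsilon\,B^n$.

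The hard part will be this last transfer step: obtaining a bound on $\int\operatorname{dist}(v,L_i)\,\varrho_i(dv)$ at the scale of the (possibly tiny) inradius $r_{\alpha_i}(Z_i)$ rather than at the scale $\varrho_i(\BS^{n-1})$ or $V_{\alpha_i}(Z_i)^{1/\alpha_i}$, either of which may be far larger. The weight $\lambda_{\alpha_i}(\Pi(\cdot))$ simultaneously suppresses mass of $\varrho_i$ almost parallel to $L_i$ and magnifies mass sticking out of $L_i$, so the reweighting must be carried out through a careful slicing of $(\BS^{n-1})^{\alpha_i}$ combined with lower bounds, in terms of $r_{\alpha_i}(Z_i)$, for intrinsic volumes of sections of $Z_i$; this is also where the factor $2^{n/2}$ is generated, as it tracks the loss incurred when the constant $2^n$ of the zonoidal mixed-volume formula is redistributed among the $m$ factors $V_{\alpha_i}(Z_i)$. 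By comparison, the stability form of Fischer's inequality and the eigenvalue bookkeeping are routine, though technical.
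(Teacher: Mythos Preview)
Your route is genuinely different from the paper's, and the difference is instructive. The paper never analyses the generating measures $\varrho_i$ at all beyond the first step. Instead, from the integral representation it extracts for each $i$ the single scalar inequality
\[
V_{\alpha_i}(Z_i)\le (1+\varepsilon)\,\max_{L\in\MG(n,\alpha_i)}V_{\alpha_i}(Z_i|L),
\]
and then applies a purely geometric stability lemma (Proposition~\ref{projstab}, proved by Steiner symmetrization and valid for \emph{arbitrary} convex bodies): near-equality in $V_\beta(K|L)\le V_\beta(K)$ forces $r_{\beta+1}(K)\lesssim r_\beta(K)\sqrt{\varepsilon}$. John's ellipsoid (Lemma~\ref{rm+1A}) then supplies the subspace $L_i$ and the containment, and a separate Lemma~\ref{anglesubspaces} (re-expanding the mixed volume of $Z_i|L_i+\varepsilon r_{\alpha_i}B^n$) gives the bracket bound. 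The zonoid structure is used only to obtain the displayed scalar inequality; everything downstream is general convex geometry.

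Your plan --- Fischer stability for the block Gram matrix, eigenvalue bookkeeping on the weighted second moments $\widetilde A_i$, and then a transfer to $\varrho_i$ --- is plausible on the level of scaling, but there is a real gap precisely where you flag it. The transfer step requires the bound $\int_{\BS^{n-1}}\operatorname{dist}(v,L_i)\,d\varrho_i\le C\,r_{\alpha_i}(Z_i)\sqrt{\varepsilon}$, and your tilted-tuple heuristic hides two difficulties. First, to say that a tilted tuple contributes a definite amount to $1-[L_1(\vec v),\dots,L_m(\vec v)]$ you must already know that the \emph{other} groups $L_j(\vec v)$ are, for $\mu$-typical $\vec v$, close to the fixed $L_j$; otherwise a vector $v_{i,1}$ far from $L_i$ could still land in $L_j(\vec v)^\perp$ and leave the bracket near $1$. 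This couples step~4 back to step~3 more tightly than your outline suggests. Second, the claimed lower bound ``$\mu_i$-mass of tilted tuples $\gtrsim r_{\alpha_i}(Z_i)^{\alpha_i-1}\cdot(\varrho_i$-mass far from $L_i)$'' needs a uniform lower bound on $\int D_{\alpha_i}(v,v_2,\dots,v_{\alpha_i})\,d\varrho_i^{\otimes(\alpha_i-1)}$ for $v$ pointing out of $L_i$, at the scale $r_{\alpha_i}(Z_i)^{\alpha_i-1}$ rather than the (possibly much larger) $V_{\alpha_i-1}(Z_i)$; Lemma~\ref{proj} gives monotonicity but not this kind of lower bound. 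Your Fischer-stability step has a parallel issue: the pointwise inequality you posit carries the factor $(\min_i\det G_i)/\max_i\|G_i\|_{\rm op}$, which does not cancel against the weight $\prod_i\sqrt{\det G_i}$ in $d\mu$, so the passage to $\operatorname{tr}(\widetilde A_i\widetilde A_j)\le C(n)\varepsilon M_iM_j$ is not yet justified when the within-block vectors are badly conditioned.

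None of this says the measure-theoretic route is doomed, but the paper's detour through Proposition~\ref{projstab} buys exactly what you are missing: it converts the near-equality directly into the geometric statement $r_{\alpha_i+1}(Z_i)\lesssim r_{\alpha_i}(Z_i)\sqrt{\varepsilon}$, with no reference to $\varrho_i$, and then John's theorem produces $L_i$ and the containment in one stroke.
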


\section{Mixed volumes involving zonoids}

In the following, we use results from \cite[Section 5.3.3]{Sch14}.
Let $Z$ be a zonoid in $\R^n$ with centre at $o$ and (non-negative) even generating measure $\rho(Z,\cdot)$ on $\BS^{n-1}$. This means that
$$
h(Z,u)=\int_{\BS^{n-1}}|\langle u,v\rangle|\, \varrho(Z,dv),\quad u\in\R^n.
$$
The generating measure of $Z$ is uniquely determined by this relation.
It follows that
\begin{equation}
\label{volZ}
V_n(Z)=\frac{2^n}{n!}\int_{\BS^{n-1}}\cdots \int_{\BS^{n-1}}|\det(v_1,\ldots,v_n)|\, \varrho(Z,dv_1)\cdots \varrho(Z,dv_n).
\end{equation}
Given the zonoids $Z_1,\ldots,Z_n$ in $\R^n$, an application of (\ref{volZ}) to the Minkowski combination  $Z=\sum_{i=1}^n\lambda_iZ_i$, for
$\lambda_1,\ldots,\lambda_n\geq 0$, leads to
\begin{equation}
\label{volZ1Zn}
V(Z_1,\ldots,Z_n)=\frac{2^n}{n!}\int_{\BS^{n-1}}\cdots \int_{\BS^{n-1}}|\det(v_1,\ldots,v_n)|\, \varrho(Z_1,dv_1)\cdots \varrho(Z_n,dv_n).
\end{equation}
Clearly, \eqref{volZ} is recovered as a special case of \eqref{volZ1Zn}.

For $u_1,\ldots,u_k\in \BS^{n-1}$, we write $D_k(u_1,\ldots,u_k)$ to denote the $k$-volume of the parallelepiped $[o,u_1]+\cdots+[o,u_k]$, which can be computed as the absolute value of the determinant of the vectors $u_1,\ldots,u_k$ in the $k$-dimensional linear subspace
they span (or as zero, if they are linearly dependent). Alternatively, $D_k(u_1,\ldots,u_k)$ is also equal to the norm of the alternating product of the vectors $u_1,\ldots,u_k$.

More generally, let $U_1,\ldots,U_m$ be linear subspaces of $\R^n$ with $\dim(U_i)=\alpha_i\ge 1$, where  $\alpha_1+\cdots+\alpha_m=n-\beta\le n$ and
$\beta\in \{0,\ldots,n-1\}$. Then the bracket $[U_1,\ldots,U_m]_{n-\beta}$ is defined as
$$
[U_1,\ldots,U_m]_{n-\beta}=|\det(u_1,\ldots,u_{\alpha_1},\ldots,
u_{n-\beta-\alpha_m+1},\ldots,u_{n-\beta})|,
$$
where $u_1,\ldots,u_{\alpha_1}$ is an orthonormal basis of $U_1$, \ldots ,   $u_{n-\beta-\alpha_m+1},\ldots,u_{n-\beta}$ is an orthonormal basis of $U_m$, and the determinant is calculated in the subspace $U_1+\cdots +U_m$ with the induced Euclidean structure, if the linear subspaces form a direct sum of dimension $n-\beta$, and as zero otherwise. This is equivalent to the definition in \cite[Section 4.1]{SW}. Moreover, by basic (multilinear) linear algebra we have
\begin{equation}\label{eqC}
D_{n-\beta}(u_1,\ldots,u_{n-\beta})=[U_1,\ldots,U_m]_{n-\beta}D_{\alpha_1}
(u_1,\ldots,u_{\alpha_1})\cdots D_{\alpha_m}(u_{n-\beta-\alpha_m+1},\ldots,u_{n-\beta}).
\end{equation}
Next we recall from \cite[Section 5.3.3]{Sch14} a formula for the mixed volume of convex bodies and zonoids (if at least one zonoid is involved). As usual, we write $v^{(\alpha)}(\cdot,\ldots,\cdot)$ to denote the mixed volume
of compact convex sets contained in some $\alpha$-dimensional linear (or affine)  subspace. For a $k$-dimensional affine subspace $A$ of $\R^n$ with $A=L+t$ for some $t\in\R^n$ and $L\in \MG(n,k)$,  we write $A^\bot$ to denote the   linear subspace $L^\perp\in\MG(n,n-k)$, the  orthogonal complement of $L$. For $u\in \BS^{n-1}$ we set $u^\bot=({\rm lin}\,u)^\bot$, where ${\rm lin}\,u$ is the linear subspace spanned by $u$.

 Let $1\le j\le n$,
$Z_1,\ldots,Z_j\subset\R^n$ be zonoids and let $K_1,\ldots,K_{n-j}$ be general compact convex sets. Then (see \cite[Theorem 5.3.2]{Sch14})
\begin{align}\label{genmixvol}
&V(K_1,\ldots,K_{n-j},Z_1,\ldots,Z_j)\nonumber\\
&=\frac{2^j(n-j)!}{n!}\int_{\BS^{n-1}}\ldots\int_{\BS^{n-1}} D_j(u_1,\ldots,u_j)\nonumber\\
&\qquad\times v^{(n-j)}\left(K_1|\lin\{u_1,\ldots,u_j\}^\perp,\ldots,K_{n-j}|
\lin\{u_1,\ldots,u_j\}^\perp\right)\\
&\qquad\times \varrho(Z_1,du_1)\ldots \varrho(Z_j,du_j).\nonumber
\end{align}
The $j$th projection generating measure $\varrho_{(j)}(Z,\cdot)$ of a zonoid $Z$ in $\R^n$ with generating measure $\varrho(Z,\cdot)$ is a non-negative measure on the linear Grassmannian $\MG(n,j)$ of $j$-dimensional linear subspaces of $\R^n$ and defined by
\begin{equation}\label{eqA}
\varrho_{(j)}(Z,\cdot)=\frac{2^j}{j!\kappa_j}\int_{(\BS^{n-1})^j}
\mathbf{1}\{\text{lin}\{u_1,\ldots,u_j\}\in\cdot\} D_j(u_1,\ldots,u_j)\,
\rho(Z,\cdot)^{\otimes j}(d(u_1,\ldots,u_j)).
\end{equation}
Now we combine \eqref{eqC}, \eqref{genmixvol} and \eqref{eqA} to obtain a representation of mixed volumes
of general convex bodies and of zonoids counted with multiplicities. For this, let
$K_1,\ldots,K_\beta\subset\R^n$ be compact convex sets, and let $Z_1,\ldots,Z_m$ be zonoids with multiplicities $\alpha_1,\ldots,\alpha_m\ge 1$, where $\alpha_1+\cdots+\alpha_m=n-\beta$ and $\beta\in \{0,\ldots,n-1\}$.
Then we obtain
\begin{align*}
&V(K_1,\ldots,K_\beta,Z_1[\alpha_1],\ldots,Z_m[\alpha_m])\\
&=\frac{2^{n-\beta}\beta!}{n!}\int_{\BS^{n-1}}\ldots\int_{\BS^{n-1}} D_{n-\beta}(u_1,\ldots,u_{n-\beta})\nonumber\\
&\qquad\times v^{(\beta)}\left(K_1|\lin\{u_1,\ldots,u_{n-\beta}\}^\perp,\ldots,K_\beta|
\lin\{u_1,\ldots,u_{n-\beta}\}^\perp\right)\\
&\qquad\times
\varrho(Z_1,du_1)\ldots \varrho(Z_1,d u_{\alpha_1})\ldots \varrho(Z_m,du_{n-\beta-\alpha_m+1})\ldots \varrho(Z_m,d u_{n-\beta})\\
&=\frac{2^{n-\beta}\beta!}{n!}\frac{\alpha_1!\kappa_{\alpha_1}}{2^{\alpha_1}}\cdots
\frac{\alpha_m!\kappa_{\alpha_m}}{2^{\alpha_m}}
\int_{\MG(n,\alpha_1)}\ldots\int_{\MG(n,\alpha_m)} [U_1,\ldots,U_m]_{n-\beta}\nonumber\\
&\qquad\times v^{(\beta)}\left(K_1|(U_1+\cdots+U_m)^\perp,\ldots,K_\beta|
(U_1+\cdots+U_m)^\perp\right)\\
&\qquad\times
\varrho_{(\alpha_1)}(Z_1,dU_1)\ldots  \varrho_{(\alpha_m)}(Z_m,dU_m),
\end{align*}
hence
\begin{align}\label{metaref1}
&V(K_1,\ldots,K_\beta,Z_1[\alpha_1],\ldots,Z_m[\alpha_m])\nonumber\\
&=\binom{n}{\beta,\alpha_1,\ldots,\alpha_m}^{-1}\kappa_{\alpha_1}\cdots\kappa_{\alpha_m}
\int_{\MG(n,\alpha_1)}\ldots\int_{\MG(n,\alpha_m)} [U_1,\ldots,U_m]_{n-\beta}\nonumber\\
&\qquad\times v^{(\beta)}\left(K_1|(U_1+\cdots+U_m)^\perp,\ldots,K_\beta|
(U_1+\cdots+U_m)^\perp\right)\\
&\qquad\times
\varrho_{(\alpha_1)}(Z_1,dU_1)\ldots  \varrho_{(\alpha_m)}(Z_m,dU_m).\nonumber
\end{align}
In the special case where $K_1=\cdots=K_\beta=B^n$, we obtain
\begin{align}
&V(Z_1[\alpha_1],\ldots,Z_m[\alpha_m],B^n[\beta])\nonumber\\
&=\binom{n}{\beta,\alpha_1,\ldots,\alpha_m}^{-1}
\kappa_\beta\kappa_{\alpha_1}\cdots\kappa_{\alpha_m}
\int_{\MG(n,\alpha_1)}\ldots\int_{\MG(n,\alpha_m)} [U_1,\ldots,U_m]_{n-\beta}\nonumber\\
&\qquad\times
\varrho_{(\alpha_1)}(Z_1,dU_1)\ldots  \varrho_{(\alpha_m)}(Z_m,dU_m).\label{veryspecial}
\end{align}
Since $\kappa_{n-i}V_i(K)=\binom{n}{i}V(K[i],B^n[n-i])$, the very special case $m=1$ of
\eqref{veryspecial} shows that
\begin{equation}\label{eq2}
V_{\alpha_j}(Z_j)=\kappa_{\alpha_j}\rho_{\alpha_j}(Z_j,\MG(n,\alpha_j)).
\end{equation}
Using \eqref{veryspecial},
\begin{equation}\label{Number1}
[U_1,\ldots,U_m]_{n-\beta}\le 1
\end{equation}
and \eqref{eq2}, we get
\begin{equation}
\label{zonolate}
V(Z_1[\alpha_1],\ldots,Z_m[\alpha_m],B^n[\beta])
 \le \binom{n}{\beta,\alpha_1,\ldots,\alpha_m}^{-1}
\kappa_\beta V_{\alpha_1}(Z_1)\cdots V_{\alpha_m}(Z_m).
\end{equation}
The inequality \eqref{zonolate} is sharp. Suppose that $\text{dim}(Z_j)\ge \alpha_j$ for $j=1,\ldots,k$. Then equality holds if and only if the zonoids $Z_1,\ldots,Z_k$ lie in orthogonal linear subspaces of dimensions $\alpha_1,\ldots,\alpha_k$. In fact, this inequality and the description of the equality case is a special case of  Theorem~\ref{ReverseAlexandrovFenchelKZ}.

\section{Proofs of Theorem~\ref{ReverseAlexandrovFenchelKZ} and Theorem~\ref{ReverseAlexandrovFenchelKKZ}}
\label{secKZKKZ}

We start with an important observation which allows us to compare the intrinsic volume $V_\alpha(K)$ of a convex body $K$ to the intrinsic volume $V_\alpha(K|A)$ of a projection of $K$ to a subspace $A$ of dimension $\beta\ge \alpha$ (see Lemma~\ref{proj}) and to characterize the equality case.
In the next section, when the stability
of Theorem~\ref{ReverseAlexandrovFenchelKZ} and Theorem~\ref{ReverseAlexandrovFenchelKKZ} is discussed, we establish a stronger version in the special case where $\alpha=\beta$.

For a compact convex set $K$, $u\in \BS^{n-1}$ and $H=u^\bot\in\MG(n,n-1)$, the Steiner symmetral $S_HK$ of $K$ is defined as
$$
S_HK=\left\{x+\mbox{$\frac{t-s}2$}\,u:\,x\in K|H\mbox{ and } x+tu,\;x+su\in K\right\}.
$$
It is known (see Schneider \cite{Sch14}) that $S_HK$ is a compact convex set symmetric through $H$ and $S_HK|H=K|H$. Moreover, it is known that Steiner symmetrization
does not increase the intrinsic volumes, that is,
\begin{equation}
\label{steiner}
V_\alpha(S_HK)\leq V_\alpha(K)\quad \text{for }\alpha=1,\ldots,n.
\end{equation}
This can be found in  \cite[Satz 18.5]{Leicht} or in Hadwiger's monograph \cite{Hadwiger}.
In addition,
\begin{equation}
\label{steinerequa}
V_n(S_HK)=V_n(K).
\end{equation}
For a subset $X\subset \R^n$, we write ${\rm lin}\,X$ and ${\rm conv}\,X$ to denote the linear hull and the convex hull of $X$, respectively,
and
for a compact convex set $K$, we write  ${\rm aff}\,K$ and ${\rm relint}\,K$ to denote the affine hull and the relative interior with respect to the affine full of $K$, respectively.

Let $\MA(n,\beta)$ denote the affine Grassmannian of $\beta$-dimensional affine subspaces ($\beta$-flats, for short)
of $\R^n$.

\begin{lemma}
\label{proj}
Let $1\leq \alpha\leq\beta\leq n-1$. Let $K\in\cK^n$ and $A\in \MA(n,\beta)$.  Then the following is true.
\begin{enumerate}
\item[{\rm (a)}] $V_\alpha(K|A)\le V_\alpha(K)$.
\item[{\rm (b)}] If $\dim(K)\ge \alpha$, then $V_\alpha(K|A)=V_\alpha(K)$ if and only if $K\subset A+z$ for some $z\in \R^n$ (even $K=K|A$ provided $A\cap K\neq \emptyset$).
\end{enumerate}
\end{lemma}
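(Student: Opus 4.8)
The plan is to reduce the statement about projections to $\beta$-flats to the case of projections to hyperplanes, and then exploit the relation between an orthogonal projection onto a hyperplane and Steiner symmetrization. First I would observe that it suffices to treat the case $\beta=n-1$: a general $\beta$-flat $A$ with $\beta\le n-1$ can be reached by a finite chain $A=A_{n-1}\subset A_{n-2}\subset\cdots\subset A_\beta=A$ where each $A_{j}$ is a $j$-flat inside the previous one, and $K|A=(\cdots((K|A_{n-1})|A_{n-2})\cdots)|A_\beta$, so monotonicity and the equality discussion follow by iterating the hyperplane case (for part (b) one checks that equality through the chain forces $K$ to lie in a translate of each intermediate flat, hence of $A$). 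So assume $A=H+t$ with $H=u^\perp\in\MG(n,n-1)$; by translation invariance of $V_\alpha$ we may take $t=o$, so $A=H$.

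Next, the key identity is $S_HK|H=K|H$ together with $V_\alpha(S_HK)\le V_\alpha(K)$ from \eqref{steiner}. Since $\alpha\le\beta=n-1$ and $S_HK$ is symmetric through $H$, I want to say that $V_\alpha(S_HK)\ge V_\alpha(S_HK|H)=V_\alpha(K|H)$; granting this, we get $V_\alpha(K|H)\le V_\alpha(S_HK)\le V_\alpha(K)$, which is (a). The inequality $V_\alpha(S_HK)\ge V_\alpha(S_HK|H)$ is itself an instance of ``projection does not increase $V_\alpha$'' but now for the special body $S_HK$ which is symmetric about $H$; for such a body one can argue directly, e.g. by noting $S_HK|H$ is a ``slice'' $S_HK\cap H$ (up to the symmetry) and intrinsic volumes are monotone under inclusion after an affine identification, or simply invoke (a) itself in the already-established hyperplane generality applied to $S_HK$ — so really I would prove (a) for $\beta=n-1$ in one stroke by first passing to $S_HK$. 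Actually the cleanest route: $V_\alpha(K|H)=V_\alpha(S_HK\cap H)\le V_\alpha(S_HK)\le V_\alpha(K)$, where the first equality uses $S_HK\cap H=S_HK|H=K|H$ (true because $S_HK$ is symmetric through $H$, so its projection onto $H$ equals its central slice) and the middle inequality is monotonicity of intrinsic volumes under set inclusion.

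For part (b), suppose $\dim(K)\ge\alpha$ and $V_\alpha(K|A)=V_\alpha(K)$; again reduce to $A=H=u^\perp$. Then every inequality in the chain $V_\alpha(K|H)=V_\alpha(S_HK\cap H)\le V_\alpha(S_HK)\le V_\alpha(K)$ is an equality. The decisive one is $V_\alpha(S_HK)=V_\alpha(K)$, i.e. equality in \eqref{steiner}. The main obstacle is the equality case of the Steiner-symmetrization inequality for intrinsic volumes: I would need that $V_\alpha(S_HK)=V_\alpha(K)$ with $\dim(K)\ge\alpha$ forces $K$ to be symmetric through a hyperplane parallel to $H$, equivalently (combined with the other equalities, which force the ``width'' of $K$ in direction $u$ to collapse) that $K$ lies in a translate of $H$. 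Concretely: writing $K$ over $K|H$ via its upper and lower boundary functions $f\ge g$, one has $S_HK$ described by $\pm\frac{f-g}{2}$; equality in the monotone/Steiner chain should pin down $f-g\equiv 0$ on the relative interior, using that if $f-g$ is not identically zero then either $\dim(K|H)<\alpha$ (so $\dim(K)\le\alpha$ and then $V_\alpha$ is just $\lambda_\alpha$ and the claim is elementary) or one can produce a strict drop in some $V_\alpha$ by a further symmetrization or by the equality characterization in the monotonicity-under-inclusion step. I would handle the low-dimensional case $\dim(K)=\alpha$ separately (there $V_\alpha(K|A)=\lambda_\alpha(K|A)\le\lambda_\alpha(K)=V_\alpha(K)$ with equality iff the projection is injective on $K$, i.e. $K$ is contained in a translate of $A$), and for $\dim(K)>\alpha$ invoke the known equality conditions for Steiner symmetrization of intrinsic volumes from \cite{Leicht,Hadwiger}; if those references only give the inequality, the fallback is to iterate: $V_\alpha(K|H)=V_\alpha(K)$ and $V_\alpha(K|H')=V_\alpha(K)$ for enough hyperplanes $H'$ near $H$ together force $K\subset H+z$. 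Finally, once $K\subset H+z$, the parenthetical ``$K=K|A$ if $A\cap K\ne\emptyset$'' is immediate since then $z$ can be taken $o$.
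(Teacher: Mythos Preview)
Your plan is the paper's: reduce to hyperplanes via a nested chain of flats, prove (a) from $K|H\subset S_HK$ together with \eqref{steiner}, and for (b) handle $\dim K=\alpha$ directly while for $\dim K>\alpha$ invoke the equality case in \cite[Satz~18.5]{Leicht} to conclude that (after centering) $K$ is symmetric through $H$, so that $K|H=K\cap H$. The one place where the paper is sharper is the final step you leave vague: to exclude $K\cap H\subsetneq K$ the paper applies the Crofton formula to get $V_\alpha(K\cap H)<V_\alpha(K)$ whenever $\dim K>\alpha$, which is precisely the ``equality characterization in the monotonicity-under-inclusion step'' you allude to; your suggested fallback (using equality for many nearby hyperplanes $H'$) is unfounded, since the hypothesis only gives equality for the single flat $A$, and should be discarded.
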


\begin{proof}
For the proof we can assume that $K$ has its centroid at the origin $o$ and $A\in \MG(n,\beta)$.

In a first step, we consider the case where $\beta=n-1$.

(a) Recall that $V_\alpha(S_AK)\le V_\alpha(K)$ (see \cite[Satz 18.5]{Leicht}). Since $K|A\subset S_A K$, we get
\begin{equation}\label{refineq1}
V_\alpha(K|A)\le V_\alpha(S_AK)\le V_\alpha(K).
\end{equation}

(b) For the non-trivial direction, we assume that  $V_\alpha(K|A)=V_\alpha(K)$. Then \eqref{refineq1} implies that
\begin{equation}\label{refeq2}
V_\alpha(K|A)= V_\alpha(S_AK)= V_\alpha(K).
\end{equation}
Aiming at a contradiction, we assume that $K\not\subset A$. Suppose that $\dim(K)=\alpha$. Then $V_\alpha(K|A)= V_\alpha(K)>0$ by \eqref{refeq2}, hence
$\dim(K|A)=\alpha$. Since $K\not\subset A$, it follows that $V_\alpha(K)=\lambda_\alpha(K)>\lambda_\alpha(K|A)=V_\alpha(K|A)$, which contradicts \eqref{refeq2}.
This shows that $\dim(K)\ge\alpha+1$.
   Since $0\le n-\dim(K+A^\perp)<n-\alpha<n$ and $V_\alpha(S_AK)= V_\alpha(K)$, \cite[Satz 18.5, (444b)]{Leicht} now yields that $K$ is symmetric with respect to $A$ (here we use that $K$ has its centroid at the origin), in particular we have $K|A=K\cap A\subsetneqq K$ since $K\not\subset A$. Since $\dim(K)>\alpha$, there is an open set of affine flats $E\in\MA(n,n-\alpha)$ for which $K\cap E\neq\emptyset$ and $(K|A)\cap E=K\cap A\cap E=\emptyset$. Let $\mu_{n-\alpha}$ denote the suitably normalized
motion invariant Haar measure on $\MA(n,n-\alpha)$ and let $\chi$ denote the Euler characteristic. Then the Crofton formula (see \cite[Chapter 5]{HW2020} , \cite[Section 4.4]{Sch14}) implies that
\begin{align*}
V_\alpha(K)&=c_{n,\alpha}\int_{\MA(n,n-\alpha)}\chi(K\cap E)\, \mu_{n-\alpha}(dE)\\
&>c_{n,\alpha}\int_{\MA(n,n-\alpha)}\chi((K|A)\cap E)\, \mu_{n-\alpha}(dE)\\
&=V_\alpha(K|A),
\end{align*}
where $c_{n,\alpha}$ is a constant depending only on $n,\alpha$. This is the required contradiction.

Now we turn to the general case.

(a) We choose $A_i\in\MG(n,i)$ for $i=\beta,\ldots,n-1$ such that $A=A_\beta\subsetneqq A_{\beta+1}\subsetneqq\cdots\subsetneqq A_{n-1}$. By the first step, we have $V_\alpha(K|A_{n-1})\le V_\alpha(K)$.
Since $K|A_{n-2}=(K| A_{n-1})|A_{n-2}$, we can apply the result from the first step in $A_{n-1}$ to $K|A_{n-1}$ with respect to $A_{n-2}$ to get
$$
V_\alpha(K|A_{n-2})=V_\alpha((K|A_{n-1})|A_{n-2})\le V_\alpha(K|A_{n-1})\le V_\alpha(K).
$$
Observe that by the intrinsic nature of $V_\alpha$ the respective ambient space need not be indicated. Using
$K|A_{n-3}=(K| A_{n-2})|A_{n-3}$, we obtain in the same way that
$$
V_\alpha(K|A_{n-3})=V_\alpha((K|A_{n-2})|A_{n-3})\le V_\alpha(K|A_{n-2})\le  V_\alpha(K|A_{n-1})\le V_\alpha(K).
$$
By induction, we thus get
\begin{equation}\label{refineq2}
V_\alpha(K|A)=V_\alpha(K|A_\beta)\le V_\alpha(K|A_{\beta+1})\le \ldots \le V_\alpha(K|A_{n-1})\le V_\alpha(K).
\end{equation}

(b) Now we assume that $V_\alpha(K|A)=V_\alpha(K)$. From \eqref{refineq2}, we deduce that
$$
V_\alpha(K|A)=V_\alpha(K|A_\beta)= V_\alpha(K|A_{\beta+1})= \ldots = V_\alpha(K|A_{n-1})= V_\alpha(K).
$$
By the result of the first step and $V_\alpha(K|A_{n-1})\le V_\alpha(K)$ we conclude that $K\subset A_{n-1}$, hence $K|A_{n-1}=K$. Since $K|A_{n-2}=(K|A_{n-1})|A_{n-2}$ and $ V_\alpha(K|A_{n-2})=V_\alpha(K|A_{n-1})$, it follows from the first step, applied in $A_{n-1}$, that $K=K|A_{n-1}\subset A_{n-2}$. Continuing inductively in this way it follows that $K\subset A_\beta=A$, as asserted.
\end{proof}

For $u_1,\ldots,u_k\in\R^n$, we write ${\rm lin}\{u_1,\ldots,u_k\}^\bot$ to denote the orthogonal complement of
the linear hull of $u_1,\ldots,u_k$.

\medskip

For the proof of Theorems \ref{ReverseAlexandrovFenchelKZ} and \ref{ReverseAlexandrovFenchelKKZ} we start with some preparation.

\medskip

Note that Theorem \ref{ReverseAlexandrovFenchelKZ} holds trivially with equality if $\beta=n$. Since Theorem \ref{ReverseAlexandrovFenchelKKZ} boils down to Theorem
\ref{ReverseMinkowski} if no zonoids are involved, it is sufficient in the following to consider the following range for the involved parameters.

Let $0\le \gamma_1,\gamma_2$, $\gamma:=\gamma_1+\gamma_2$, $\alpha_1,\ldots,\alpha_m\ge 1$, $\gamma\le \beta\le n-1$ such that
$\alpha_1+\cdots+\alpha_m=n-\beta$, $m\ge 1$. Let $K_1,K_2\subset\R^n$ be arbitrary compact convex sets and let $Z_1,\ldots,Z_m\subset\R^n$ be zonoids.

A special case of \eqref{metaref1} gives
\begin{align}
\label{zonoidinmixedvol3}
&V(K_1[\gamma_1],K_2[\gamma_2],B^n[\beta-\gamma],Z_{1}[\alpha_1],\ldots,Z_m[\alpha_m])
\nonumber\\
&=\binom{n}{\beta,\alpha_1,\ldots,\alpha_m}^{-1}\kappa_{\alpha_1}\cdots \kappa_{\alpha_m}
\int_{\MG(n,\alpha_1)}\ldots \int_{\MG(n,\alpha_m)} [U_1,\ldots,U_m]_{n-\beta}\nonumber\\
&\quad\times
v^{(\beta)}\left(K_1|(U_1+\cdots+U_m)^\bot [\gamma_1],
K_2|(U_1+\cdots+U_m)^\bot [\gamma_2],
B^n|(U_1+\cdots+U_m)^\bot [\beta-\gamma]
\right)\nonumber\\
&\quad\times \varrho_{(\alpha_1)}(Z_1,dU_1)\ldots \varrho_{(\alpha_m)}(Z_m,dU_m).
\end{align}

\begin{proof}[Proof of Theorem \ref{ReverseAlexandrovFenchelKZ}]
Here we have $\gamma_2=0$, and hence $\gamma_1=\gamma\le \beta$, and we set $K_1=:K$. If $\dim(U_1+\cdots +U_m)=n-\beta$, then
\begin{align}\label{boundadd}
&v^{(\beta)}\left(K_1|(U_1+\cdots +U_m)^\bot [\gamma],
B^n|(U_1+\cdots +U_m)^\bot [\beta-\gamma]
\right)\nonumber\\
&=V_\gamma(K|(U_1+\cdots +U_m)^\perp)\frac{\kappa_{\beta-\gamma}}{\binom{\beta}{\gamma}}\nonumber\\
&\le V_\gamma(K) \frac{\kappa_{\beta-\gamma}}{\binom{\beta}{\gamma}}
\end{align}
by Lemma \ref{proj}. Moreover, if $\dim(K)\ge \gamma$, then equality holds if and only if
$K$ is contained in a translate of $(U_1+\cdots +U_m)^\perp$.

Now we insert this estimate into \eqref{zonoidinmixedvol3}. This leads to
\begin{align}
\label{zonoidinmixedvol4}
&V(K[\gamma],B^n[\beta-\gamma],Z_{1}[\alpha_1],\ldots,Z_m[\alpha_m])\nonumber\\
&\le \binom{n}{\gamma,\beta-\gamma,\alpha_1,\ldots,\alpha_m}^{-1}
\kappa_{\beta-\gamma}\kappa_{\alpha_1}\cdots \kappa_{\alpha_m}
V_\gamma(K) \\
&\qquad\times \int_{\MG(n,\alpha_1)}\ldots\int_{\MG(n,\alpha_m)} [U_1,\ldots,U_m]_{n-\beta}\,\varrho_{(\alpha_1)}(Z_1,dU_1)\ldots \varrho_{(\alpha_m)}(Z_m,dU_m)\nonumber\\
&=\binom{\beta}{\gamma}^{-1}\frac{\kappa_{\beta-\gamma}}{\kappa_\beta}V_\gamma(K)V(Z_1[\alpha_1],\ldots,Z_m[\alpha_m],B^n[\beta]).\nonumber
\end{align}
If we use the upper bound $[U_1,\ldots,U_m]_{n-\beta}\le 1$ and \eqref{eq2}, we derive from \eqref{zonoidinmixedvol4} that
\begin{align}
\label{zonoidinmixedvol5}
&V(K[\gamma],B^n[\beta-\gamma],Z_{1}[\alpha_1],\ldots,Z_m[\alpha_m])\nonumber\\
&\le \binom{n }{\gamma, \beta-\gamma,\alpha_1,\ldots, \alpha_m}^{-1}\kappa_{\beta-\gamma}V_\gamma(K)
 V_{\alpha_1}(Z_1)\cdots      V_{\alpha_m}(Z_m).
 \end{align}
This proves \eqref{refeq1.3}.

Now we assume that equality holds. Let $u_0\in\lin(K)$ (the linear subspace parallel to the affine hull of $K$) and $u_i\in\lin(Z_i)$
for $i=1,\ldots,m$. Since $\dim(Z_i)\ge \alpha_i$,
$\lin(Z_i)=\lin\spt(\varrho(Z_i,\cdot))$ and
$$
\lin\{u: u\in U\in\spt(\varrho_{(\alpha_i)}(Z_i,\cdot))\}=\lin(Z_i),
$$
there are $U_i\in \spt(\varrho_{(\alpha_i)}(Z_i,\cdot))$ with $u_i\in U_i$, for $i=1,\ldots,m$. Since in the transition from \eqref{zonoidinmixedvol4} to \eqref{zonoidinmixedvol5} equality holds, we must have $[U_1,\ldots,U_m]_{n-\beta}=1$
if $U_i\in \spt(\varrho_{(\alpha_i)}(Z_i,\cdot))$ for $i=1,\ldots,m$ (note that
$(U_1,\ldots,U_m)\mapsto [U_1,\ldots,U_m]_{n-\beta}$ is continuous), and hence
$U_1,\ldots,U_m$ are pairwise orthogonal. Moreover, equality in
\eqref{zonoidinmixedvol4} implies that we must also have equality in \eqref{boundadd},
hence it follows that $u_0\in
(U_1+\cdots +U_m)^\perp$. Thus we have shown that $u_0,u_1,\ldots,u_m$ are pairwise orthogonal, and hence $K,\lin(Z_1),\ldots,\lin(Z_m)$ lie in pairwise orthogonal affine subspaces.

The reverse implication is clear from the preceding argument, since equality holds in each step.
\end{proof}

\begin{proof}[Proof of Theorem \ref{ReverseAlexandrovFenchelKKZ}] Now we have $\gamma_1=1$, $\gamma_2=\gamma-1\ge 1$, $\beta=\gamma$ and $\alpha_1+\cdots+\alpha_m=n-\gamma=n-\beta$. We apply Theorem \ref{ReverseMinkowski} to the mixed volume in the integrand of \eqref{zonoidinmixedvol3} for $\beta=\gamma$ in
$(U_1+\cdots+U_m)^\perp$ as the ambient space. Here we can assume that $\dim(U_1+\cdots+U_m)=n-\beta$, since otherwise the integrand is zero. This gives
\begin{align}
&v^{(\beta)}\left(K_1|(U_1+\cdots+U_m)^\bot ,
K_2|(U_1+\cdots+U_m)^\bot [\gamma-1]\right)\nonumber\\
&\le\frac{1}{\beta}V_1\left(K_1|(U_1+\cdots+U_m)^\perp\right)V_{\beta-1}\left(K_2|
(U_1+\cdots+U_m)^\perp\right)\label{eqAA}\\
&\le\frac{1}{\beta}V_1(K_1 )V_{\beta-1} (K_2)\label{eqBB} ,
\end{align}
where Lemma \ref{proj} was applied twice in the last step. Since
$\dim(K_1)\ge 1$ and $\dim(K_2)\ge \beta-1$, equality holds in \eqref{eqBB} if and only if $K_1$ and $K_2$ are contained in translates of $(U_1+\ldots+U_m)^\perp$. Then we
deduce from \eqref{zonoidinmixedvol3} that
\begin{align}
&V(K_1,K_2[\gamma-1],Z_1[\alpha_1],\ldots,Z_m[\alpha_m])\nonumber\\
&\le \binom{n}{\beta,\alpha_1,\ldots,\alpha_m}^{-1}\kappa_{\alpha_1}
\cdots\kappa_{\alpha_m}\frac{1}{\beta}V_1(K_1)V_{\beta-1}(K_2)\nonumber\\
&\qquad \times \int_{\MG(n,\alpha_1)}\ldots \int_{\MG(n,\alpha_m)}
[U_1,\ldots,U_m]_{n-\beta}\, \varrho_{(\alpha_1)}(Z_1,dU_1)\ldots \varrho_{(\alpha_m)}(Z_m,dU_m)\nonumber\\
&\le \binom{n}{1,\beta-1,\alpha_1,\ldots,\alpha_m}^{-1}
V_1(K_1)V_{\beta-1}(K_2)V_{\alpha_1}(Z_1)\cdots V_{\alpha_m}(Z_m),\label{eqCC}
\end{align}
where $[U_1,\ldots,U_m]_{n-\beta}\le 1$ and \eqref{eq2} were used in the last step. This proves \eqref{ineq1.5}.

Next we assume that
\begin{align*}
&V(K_1,K_2[\gamma-1],Z_1[\alpha_1],\ldots,Z_m[\alpha_m])\\
&= \binom{n}{1,\beta-1,\alpha_1,\ldots,\alpha_m}^{-1}
V_1(K_1)V_{\beta-1}(K_2)V_{\alpha_1}(Z_1)\cdots V_{\alpha_m}(Z_m).
\end{align*}
Then equality holds in \eqref{eqCC},  \eqref{eqBB} and  \eqref{eqAA}.
Let $v_1\in\lin(K_1)$, $v_2\in\lin(K_2)$ and $u_i\in\lin(Z_i)$ for $i=1,\ldots,m$.
As in the proof of Theorem \ref{ReverseAlexandrovFenchelKZ} we can choose
$U_i\in \spt(\varrho_{(\alpha_i)}(Z_i,\cdot))$ with $u_i\in U_i$ for $i=1,\ldots,m$. Then from equality in \eqref{eqCC} it follows that $[U_1,\ldots,U_m]_{n-\beta}=1$, therefore $U_1,\ldots,U_m$ are pairwise orthogonal and $\dim(U_1+\cdots+U_m)^\perp=\beta$. From \eqref{eqBB} we conclude that $v_1,v_2\in \lin(U_1+\cdots+U_m)^\perp$, and from \eqref{eqAA} we then see that $v_1,v_2$ are orthogonal. This shows that $v_1,v_2,u_1,\ldots,u_m$ are pairwise orthogonal. This finally implies that the affine hulls of $K_1,K_2,Z_1,\ldots,Z_m$ are pairwise orthogonal.

The reverse statement is then also clear from the preceding argument, since equality holds in each step.
\end{proof}

\section{The stability version when all bodies are zonoids}
\label{secstability}

The main goal of this section is a to prove Theorem~\ref{ReverseAlexandrovFenchelZstab}. Although this theorem is stated for zonoids, most of the
preparatory statements are valid for arbitrary convex bodies.

For $m=1,\ldots,n$, the $m$-dimensional size of a compact convex set $K$ can be effectively measured by the
largest radius $r_m(K)$ of $m$-dimensional balls contained in $K$.

\begin{lemma}
\label{rmellipsoid}
Let $1\leq m\leq d\leq n$, and let $E$ be a $d$-dimensional ellipsoid in $\R^n$ with
half-axes $a_1\geq\ldots\geq a_d>0$.
\begin{enumerate}
\item[{\rm (i)}] $r_{m}(E)=a_m$.
\item[{\rm (ii)}] There exists some $A\in\MA(n,m-1)$ such that
$E\subset A+r_m(E) B^n$ and even $r_{m-1}(E|A)=r_{m-1}(E)$ provided $m\geq 2$. If $o\in E$, then $A $  can be chosen as a linear subspace.
\item[{\rm (iii)}] For any $L\in \MG(n,m)$, we have $r_{m}(E|L)\leq r_{m}(E)$.
\end{enumerate}
\end{lemma}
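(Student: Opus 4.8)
The plan is to normalise $E$ and then read off all three statements from two elementary facts. Since $r_m$ and the relation ``$K$ contains a ball of radius $\rho$'' are isometry invariant, I may rotate so that the principal axes of $E$ point in the coordinate directions $e_1,\dots,e_d$, and for parts (i) and (iii) I may in addition translate $E$ to be centred at $o$, so that
\[
E=\Bigl\{x\in\R^n:\ x_{d+1}=\dots=x_n=0,\ \textstyle\sum_{i=1}^d x_i^2/a_i^2\le 1\Bigr\}.
\]
Write $M_k=\lin\{e_k,\dots,e_n\}$, so that $\dim M_k=n-k+1$ and $M_k^\perp=\lin\{e_1,\dots,e_{k-1}\}$. \emph{Fact A} (thinness of $E$ in the last $n-m+1$ coordinate directions): for $x\in E$ the orthogonal projection $y$ of $x$ onto $M_m$ satisfies $|y|^2=\sum_{i\ge m}x_i^2\le a_m^2\sum_{i\ge m}x_i^2/a_i^2\le a_m^2$, because $a_i\le a_m$ for $i\ge m$; equivalently $h(E,u)=(\sum_{i=m}^{d}a_i^2u_i^2)^{1/2}\le a_m$ for every unit vector $u\in M_m$, so $E$ has width at most $2a_m$ in every direction lying in $M_m$. \emph{Fact B} (dimension count): every $L\in\MG(n,m)$ meets $M_m$ nontrivially, since $\dim(L\cap M_m)\ge m+(n-m+1)-n=1$.

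For (i), the bound $r_m(E)\ge a_m$ is immediate: if $\sum_{i\le m}x_i^2\le a_m^2$ then, using $a_i\ge a_m$ for $i\le m$, the point $(x_1,\dots,x_m,0,\dots,0)$ lies in $E$, so the $m$-ball of radius $a_m$ centred at $o$ in $\lin\{e_1,\dots,e_m\}$ sits inside $E$. For the reverse inequality, take any $m$-ball $B\subset E$ of radius $r$ with direction space $L\in\MG(n,m)$; by Fact~B there is a unit vector $v\in L\cap M_m$, the ball $B$ has width $2r$ in direction $v$, hence $E\supset B$ has width at least $2r$ in direction $v$, while by Fact~A the width of $E$ in the direction $v\in M_m$ is at most $2a_m$; thus $r\le a_m$, and $r_m(E)=a_m$. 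Part (iii) follows the same pattern: given $L\in\MG(n,m)$, we may assume $\dim(E|L)=m$ (otherwise $r_m(E|L)=0\le r_m(E)$); pick a unit vector $u\in L\cap M_m$ by Fact~B, observe that orthogonal projection onto $L$ leaves the width in the direction $u\in L$ unchanged, so $E|L$ has width at most $2a_m$ in direction $u$ by Fact~A, and therefore cannot contain an $m$-ball of radius larger than $a_m$; hence $r_m(E|L)\le a_m=r_m(E)$ by (i).

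For (ii), set $A_0:=M_m^\perp=\lin\{e_1,\dots,e_{m-1}\}\in\MG(n,m-1)$, with $A_0=\{o\}$ when $m=1$. Fact~A gives $E\subset A_0+a_mB^n=A_0+r_m(E)B^n$ by (i). Moreover $E|A_0=\{x:\ \sum_{i\le m-1}x_i^2/a_i^2\le 1,\ x_i=0 \text{ for } i\ge m\}$ is an $(m-1)$-dimensional ellipsoid with half-axes $a_1\ge\dots\ge a_{m-1}$, so for $m\ge 2$ part (i) applied to it and to $E$ yields $r_{m-1}(E|A_0)=a_{m-1}=r_{m-1}(E)$. This $A_0$ is a linear subspace. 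Passing from the centred ellipsoid to a general one with centre $c$ by replacing $A_0$ with $A:=A_0+c\in\MA(n,m-1)$ preserves all the stated properties, since the functionals are translation invariant and $E|A$ is a translate of $(E-c)|A_0$; and $A$ is a linear subspace precisely when $c=o$.

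I expect the only genuinely substantive point to be the upper estimate in (i), together with its twin in (iii): one must see that, no matter how an $m$-dimensional ball is positioned, its $m$-dimensional direction space necessarily contains a vector along which $E$ (or its projection) is already squeezed into width at most $2a_m$. This is exactly the interaction of Fact~A with the dimension count in Fact~B; everything else is the bookkeeping of the normalisation and the elementary evaluation of $h(E,\cdot)$ for a coordinate ellipsoid.
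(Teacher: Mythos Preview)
Your proof is correct and essentially parallels the paper for parts (i) and (ii), but your route to (iii) is genuinely different. The paper argues (iii) via a structural fact: since $E$ is a linear image of $B^n$, for each $L\in\MG(n,m)$ there is some $L_0\in\MG(n,m)$ with $E|L=(E\cap L_0)|L$, whence $r_m(E|L)\le r_m(E\cap L_0)\le r_m(E)$. You instead reuse the same width/dimension-count mechanism (your Facts~A and~B) that you set up for (i): pick $u\in L\cap M_m$, note that projection onto $L$ preserves the width in direction $u$, and bound that width by $2a_m$. Your approach is more uniform and entirely elementary, needing nothing beyond $h(E,\cdot)$ for a coordinate ellipsoid; the paper's approach is shorter once one accepts the section/projection fact for ellipsoids, and it gives the slightly stronger intermediate statement that every projection of $E$ is realised as the projection of a section through the centre. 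A minor further difference in (i): the paper invokes the $o$-symmetry of $E$ to centre the maximal $m$-ball before applying the dimension count, whereas your width argument handles arbitrary (off-centre) $m$-balls directly. One small wording point: your final clause ``$A$ is a linear subspace precisely when $c=o$'' is stronger than needed and not quite literally what the lemma asserts; all that is required (and all the paper proves) is that the construction yields a linear $A$ when $E$ is centred at $o$, which you have.
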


\begin{proof}
We may assume that the origin is the center of $E$ and $d=n$. Let
$e_1,\ldots,e_n$ be an orthonormal basis of $\R^n$ such that
\begin{equation}
\label{Eequation}
E=\left\{\sum_{i=1}^nt_ie_i:\,\sum_{i=1}^n\frac{t_i^2}{a_i^2}\leq 1\right\}.
\end{equation}
For (i), there is  an $m$-ball $B$ of radius $r_m(E)$ contained in $E$ and centered at the origin $o$ because $E$ is $o$-symmetric.
It follows that there exists a $y\in B\cap \lin\{e_m,\ldots,e_n\}$ with $\|y\|=r_m(E)$.
Since $\|y\|\leq a_m$ by \eqref{Eequation}, we conclude (i).

For (ii), we can take $A=\{o\}$ if $m=1$, and $A=\lin\{e_1,\ldots,e_{m-1}\}$ if $m\geq 2$ by (i) and \eqref{Eequation}.

For (iii), let $L\in \MG(n,m)$. Since $E$ is a linear image of $B^n$, there exists an $L_0\in\MG(n,m)$ such that
$E|L=(E\cap L_0)|L$; therefore, $r_{m}(E|L)\leq r_m(E\cap L_0)\leq r_{m}(E)$.
\end{proof}

According to John's theorem (see Schneider \cite[Theorem 10.12.2]{Sch14}), for any $d$-dimensional compact convex set $K$ in $\R^n$, $1\leq d\leq n$,
there exists a $z\in K$ and a $d$-dimensional ellipsoid $E$ centred at $o$ such that
\begin{equation}
\label{John}
z+E\subset K\subset z+d\cdot E.
\end{equation}

\begin{lemma}
\label{rm+1A}
Let $K$ be a compact convex set in $\R^n$, and let $m\in\{1,\ldots,n-1\}$.
\begin{enumerate}
\item[{\rm (i)}] If $A\in \MA(n,m)$  and $K\subset A+\varrho B^n$
for some $\varrho\geq 0$, then $r_{m+1}(K)\leq \varrho$.
\item[{\rm (ii)}] There exists some $A_0\in \MA(n,m)$ such that
$K\subset A_0+nr_{m+1}(K) B^n$ and $r_m(K|A_0)\geq \frac1n\, r_m(K)$.  where
$o\in A_0$ If $K$ is $o$-symmetric, then $A_0$  can be chosen as a linear subspace.
\item[{\rm (iii)}] If $L\in \MG(n,m)$, then $r_m(K|L)\leq n r_m(K)$.
\end{enumerate}
\end{lemma}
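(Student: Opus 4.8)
The strategy is to reduce every statement to the ellipsoid case of Lemma~\ref{rmellipsoid} by means of John's position \eqref{John}. Throughout, let $d=\dim K$; if $d\le m$ then (i) and (iii) are trivial (one may take the relevant ball radius to be $0$ on the projected side), so we may assume $d\ge m+1$ in (i)--(iii) where needed, and we fix $z\in K$ and a $d$-dimensional ellipsoid $E$ centred at $o$ with $z+E\subset K\subset z+dE$; after translating by $-z$ (which affects none of the quantities $r_j(\cdot)$) we may assume $z=o$, so $E\subset K\subset dE$.

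For (i): if $K\subset A+\varrho B^n$ with $A\in\MA(n,m)$, then any $(m+1)$-ball contained in $K$ is contained in the slab $A+\varrho B^n$, and an $(m+1)$-dimensional ball in an $(m+1)$-dimensional affine subspace that meets $A$ in at most an $m$-flat must have radius $\le\varrho$ (elementary: project the ball onto the line orthogonal to $A$ inside its own span). Hence $r_{m+1}(K)\le\varrho$. This part uses no John ellipsoid at all.

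For (ii): apply Lemma~\ref{rmellipsoid}(ii) to $E$ (with the index $m+1$ in place of $m$, noting $m+1\le d$) to obtain a linear subspace $A_0\in\MG(n,m)$ with $E\subset A_0+r_{m+1}(E)B^n$ and $r_m(E|A_0)=r_m(E)$. From $K\subset dE$ and $E\subset A_0+r_{m+1}(E)B^n$ we get $K\subset A_0+dr_{m+1}(E)B^n$, and by Lemma~\ref{rmellipsoid}(iii) applied to $E$ we have $r_{m+1}(E)\le r_{m+1}(dE)$... more directly, $E\subset K$ gives $r_{m+1}(E)\le r_{m+1}(K)$, so $K\subset A_0+dr_{m+1}(K)B^n$; and $nr_{m+1}(K)\ge dr_{m+1}(K)$ since $d\le n$. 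For the lower bound on $r_m(K|A_0)$: from $E\subset K$ we get $E|A_0\subset K|A_0$, hence
$$
r_m(K|A_0)\ge r_m(E|A_0)=r_m(E)\ge \tfrac1d\,r_m(dE)\ge \tfrac1d\,r_m(K)\ge\tfrac1n\,r_m(K),
$$
where $r_m(K)\le r_m(dE)=d\,r_m(E)$ uses $K\subset dE$ together with Lemma~\ref{rmellipsoid}(i)-type scaling. If $K$ is $o$-symmetric, one can take the John ellipsoid centred at $o$ with $z=o$, and then $A_0$ is already a linear subspace.

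For (iii): given $L\in\MG(n,m)$, from $E\subset K\subset dE$ we get $K|L\subset dE|L$, hence $r_m(K|L)\le r_m(dE|L)=d\,r_m(E|L)\le d\,r_m(E)\le d\,r_m(K)\le n\,r_m(K)$, using Lemma~\ref{rmellipsoid}(iii) for the ellipsoid and $r_m(E)\le r_m(K)$ from $E\subset K$.

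The only genuinely delicate point is part~(ii): one must be careful that the single subspace $A_0$ simultaneously controls $K$ from above (the inclusion into a slab of the right width) and $K|A_0$ from below (a lower bound on the inradius of the projection), and the factor $n$ rather than something smaller is exactly the price of passing through John's theorem, which loses a factor $d\le n$ in each direction. Matching the two uses of the ellipsoid so that the constants multiply to at most $n$ rather than $n^2$ is the crux: the key observation is that the $d$-factor from $K\subset dE$ is used on the upper-bound side for the slab width, while the lower bound $r_m(K|A_0)\ge r_m(E|A_0)=r_m(E)$ only loses a further $d$ once, when comparing $r_m(E)$ with $r_m(K)$ — so no factor is squared.
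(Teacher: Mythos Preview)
Your proof is correct and follows essentially the same route as the paper: part~(i) is the elementary slab argument (the paper phrases it via the segment $(z+A^\bot)\cap B$, which is exactly your ``line orthogonal to $A$ inside the ball's span''), and parts~(ii)--(iii) are reduced to Lemma~\ref{rmellipsoid} via John's ellipsoid precisely as you do. Your remark that the factor $n$ in (ii) arises because the John factor $d\le n$ is paid only once on each side (not squared) is accurate and matches the paper's bookkeeping; the only cosmetic point is that the low-dimensional case $d\le m$ for (ii) (where $r_{m+1}(K)=0$ and one simply takes $A_0\supset\operatorname{aff} K$) deserves an explicit sentence rather than being folded into ``where needed''.
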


\begin{proof}
For (i), let $B$ be an $(m+1)$-dimensional ball of radius $r_{m+1}(K)$ and center $z$ contained in $K$.
It follows that $(z+A^\bot)\cap B$ contains a segment $s$ of length  $2r_{m+1}(K)$.
Since $s$ is a subset of the $(n-m)$-dimensional ball $(z+A^\bot)\cap (A+\varrho B^n)$ of radius $\varrho$,
we have $r_{m+1}(K)\leq \varrho$.

For (ii), let $z\in K$ and let $E$ be an ellipsoid centred at $o$ with ${\rm dim}\,E={\rm dim}\,K$ and satisfying
\eqref{John}. It follows from Lemma~\ref{rmellipsoid} (ii) that
there exists $L_0\in \MG(n,m)$ such that
$E\subset L_0+r_{m+1}(E) B^n$ and $r_m(E|L_0)=r_m(E)$.
Since $r_{m+1}(E)\leq r_{m+1}(K)$ and $E\subset K-z\subset nE$, we conclude that with $A_0:=z+L_0$ we have
$K\subset A_0+nr_{m+1}(K) B^n$ and $r_m(K|A_0)\geq r_m(E|L_0)=r_m(E)\geq\frac1n\,r_m(K)$.

For the proof of (iii), we choose $z,E$ as for (ii). Then it follows from  Lemma~\ref{rmellipsoid} (iii) that
$$
r_m(K|L)\leq r_m((z+nE)|L)\leq r_m(z+nE)= n\,r_m(z+E)\leq n r_m(K),
$$
which yields the assertion.
\end{proof}

The main tool used to verify stability versions of
Theorems~\ref{ReverseAlexandrovFenchelKZ} and \ref{ReverseAlexandrovFenchelKKZ} is
Proposition~\ref{projstab}.

\begin{prop}
\label{projstab}
Let $n\geq 2$ and $1\leq \beta\leq n-1$.
If  $K$ is a compact convex set in $\R^n$ with $\dim(K)\ge \beta$, then
$$
V_\beta(K)\geq \left(1+\frac1{2^{n+2}n^5}\cdot\frac{r_{\beta+1}(K)^2}{r_\beta(K)^2}\right)\max\{
V_\beta(K|L):L\in \MG(n,\beta)\}.
$$
\end{prop}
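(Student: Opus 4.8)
I want to compare $V_\beta(K)$ to the maximal projection $\max_L V_\beta(K|L)$ and extract a quantitative gain governed by the ratio $r_{\beta+1}(K)/r_\beta(K)$. Fix $L^\ast\in\MG(n,\beta)$ achieving the maximum, and let $u\in (L^\ast)^\perp\cap\BS^{n-1}$ and $H=u^\perp$. Since projecting onto $H$ and then onto $L^\ast$ equals projecting onto $L^\ast$, Lemma~\ref{proj}(a) gives $V_\beta(K|L^\ast)\le V_\beta(K|H)\le V_\beta(K)$, so it suffices to produce a gap at one of these two steps. The first step I would take is to use the Steiner symmetral: $V_\beta(K|H)\le V_\beta(S_HK)\le V_\beta(K)$ by \eqref{steiner}, and $S_HK|H=K|H$, so $V_\beta(S_HK|L^\ast)=V_\beta(K|L^\ast)$. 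Thus replacing $K$ by $S_HK$ changes nothing on the left side of the desired inequality that could help, but it does let me assume, after symmetrizing, that $K$ is $o$-symmetric with respect to $H$; iterating over a suitable finite sequence of symmetrizations (which only decreases $V_\beta$ and fixes the relevant projection) I may as well assume $K$ itself is an $o$-symmetric body, or even — by John's theorem \eqref{John} — compare $K$ to its John ellipsoid.

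**Reduction to an ellipsoid estimate.** By John's theorem, choose $z$ and an ellipsoid $E$ (centred at $o$, with $\dim E=\dim K\ge\beta$) with $z+E\subset K\subset z+nE$. Since intrinsic volumes are monotone and translation invariant, $V_\beta(E)\le V_\beta(K)$ and $V_\beta(K|L)\le V_\beta(nE|L)=n^\beta V_\beta(E|L)$; also $r_{\beta+1}(E)\ge \frac1n r_{\beta+1}(K)$ and $r_\beta(E)\le r_\beta(K)$ by the arguments in Lemma~\ref{rm+1A}. So if I can prove the analogous statement for ellipsoids, with some explicit constant, the powers of $n$ absorb into the final constant $2^{n+2}n^5$ (which is generous). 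For an ellipsoid $E$ with half-axes $a_1\ge\cdots\ge a_d$ along orthonormal axes $e_1,\dots,e_d$, the optimal $\beta$-projection is onto $L^\ast=\lin\{e_1,\dots,e_\beta\}$, and $V_\beta(E|L^\ast)=\kappa_\beta a_1\cdots a_\beta$ while $V_\beta(E)$ is a sum over $\beta$-subsets $S$ of $\{1,\dots,d\}$ of $\kappa_\beta\prod_{i\in S}a_i$ times a Grassmannian integral factor (from the projection formula for $V_\beta$ in the Introduction). The point is that the term $S=\{1,\dots,\beta\}$ is present with a fixed positive coefficient, but so is the term $S=\{1,\dots,\beta-1,\beta+1\}$, and this extra term contributes at least a multiple of $a_1\cdots a_{\beta-1}a_{\beta+1}=(a_{\beta+1}/a_\beta)\cdot a_1\cdots a_\beta$. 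Since $r_{\beta+1}(E)=a_{\beta+1}$ and $r_\beta(E)=a_\beta$ by Lemma~\ref{rmellipsoid}(i), this is exactly the gain $\frac{r_{\beta+1}(E)^2}{r_\beta(E)^2}$ — or rather its square root, so I should be slightly more careful and use that $V_\beta(E)^2$ versus $V_\beta(E|L^\ast)^2$ is the natural comparison, or simply note $a_{\beta+1}/a_\beta\ge (a_{\beta+1}/a_\beta)^2$.

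**Locating the coefficient.** The concrete computation I expect to do: use $V_\beta(E)=\binom{d}{\beta}^{-1}\frac{\kappa_\beta\kappa_{d-\beta}}{\kappa_d}\int_{\MG(d,\beta)}V_\beta(E|L)\,\nu_\beta(dL)$ within $\aff E$, and bound the integrand from below on the set of $L$ close to $L^\ast$ but tilted slightly into the $e_{\beta+1}$ direction. Concretely, rotating $e_\beta$ towards $e_{\beta+1}$ by an angle $\theta$ gives a subspace on which $V_\beta(E|L)\ge \kappa_\beta a_1\cdots a_{\beta-1}\sqrt{a_\beta^2\cos^2\theta+a_{\beta+1}^2\sin^2\theta}$, and integrating $\sqrt{a_\beta^2\cos^2\theta+a_{\beta+1}^2\sin^2\theta}-a_\beta\cos\theta$ over a fixed-measure family of such tilts produces a lower bound of order $a_{\beta+1}^2/a_\beta$ times an absolute constant, after comparing with the $L^\ast$ term. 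The bookkeeping of the Haar-measure weight of this tilted family on $\MG(d,\beta)$ is where the powers of $2^n$ and $n^5$ come from, and this is the main obstacle: getting a clean, fully explicit lower bound on that measure and on the elementary integral, uniformly in $d\le n$ and $1\le\beta\le n-1$, without the constant blowing up worse than $2^{n+2}n^5$. I expect the estimate $\nu_\beta(\{L:\text{tilt angle}\le\theta_0\})\ge c_n$ with $c_n\ge 2^{-n}n^{-c}$ to be the crux; everything else is monotonicity (Lemma~\ref{proj}), John's theorem \eqref{John}, and the radius comparisons of Lemmas~\ref{rmellipsoid} and~\ref{rm+1A}.
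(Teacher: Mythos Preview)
Your reduction to ellipsoids via John's theorem is the fatal step. You write that from $z+E\subset K\subset z+nE$ one gets $V_\beta(E)\le V_\beta(K)$ and $M_K:=\max_L V_\beta(K|L)\le n^\beta M_E$, and that ``the powers of $n$ absorb into the final constant $2^{n+2}n^5$.'' They do not. Suppose you prove $V_\beta(E)\ge (1+\delta_E)M_E$ for the ellipsoid. Chaining gives only
\[
V_\beta(K)\ \ge\ (1+\delta_E)M_E\ \ge\ \frac{1+\delta_E}{n^\beta}\,M_K,
\]
which for $n\ge 2$ and $\beta\ge 1$ is \emph{weaker} than the already known $V_\beta(K)\ge M_K$ from Lemma~\ref{proj}. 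The proposition asks for a bound of the shape $V_\beta(K)\ge (1+\text{small})\,M_K$; the ``$1$'' is the trivial part and the ``small'' is the entire content. Any reduction that perturbs the ratio $V_\beta(K)/M_K$ by a factor bounded away from $1$ (here by $n^{-\beta}$) destroys the argument, because the gain $\delta_E$ you can extract from the ellipsoid is itself small. You cannot recover by also invoking $V_\beta(K)\ge M_K$ separately: there is no way to combine $V_\beta(K)\ge M_K$ and $V_\beta(K)\ge (1+\delta_E)M_E$ into $V_\beta(K)\ge (1+c\,\delta_E)M_K$, since $M_K-M_E$ may be of order $M_K$.

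This is exactly why the paper does \emph{not} pass to the John ellipsoid here. Instead it performs iterated Steiner symmetrizations $K\to K_0$ with respect to coordinate hyperplanes adapted to $L_\beta$ (and then a further Schwarz-type rounding in $L_{\beta-1}$). The point is that these operations (i) can only decrease $V_\beta$, and (ii) preserve $V_\beta(K|L_\beta)$ \emph{exactly}, since for the unconditional body one has $K_0\cap L_\beta=K_0|L_\beta$ and the $\beta$-volume of that section equals $V_\beta(K|L_\beta)$. Hence the ratio $V_\beta/M_K$ can only go up, and one is reduced to a completely explicit $(\beta+1)$-dimensional double cone $\widetilde{K}$ over $\widetilde{K}_0$, where $V_\beta(\widetilde{K})=\int_{\widetilde{K}_0}\sqrt{1+\|\nabla f\|^2}$ is estimated by hand. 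Your opening paragraph actually gestures toward the right idea (symmetrize with respect to $H=u^\perp$), but you abandon it for the John route instead of iterating it.

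A secondary issue: your formula ``$V_\beta(E)$ is a sum over $\beta$-subsets $S$ of $\kappa_\beta\prod_{i\in S}a_i$ times a Grassmannian factor'' is not correct; intrinsic volumes of ellipsoids are not elementary symmetric polynomials in the semi-axes (already $V_1$ of an ellipse is an elliptic integral). Your later integral/tilting argument is closer to something workable for ellipsoids, but even if you completed it, the John reduction above would still not let you transfer it back to $K$.
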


\begin{remark}{\rm  $(\beta+1)$-dimensional cones show that the order of the estimate is optimal.}
\end{remark}

\begin{proof}
We may assume that $r_{\beta+1}(K)>0$ (compare Lemma~\ref{proj}) and that the origin $o$ is the center of mass of $K$.

Let $L_\beta \in \MG(n,\beta)$ be such that $V_\beta(K|L_\beta)=\max\{V_\beta(K|L):L\in \MG(n,\beta)\}$, and
let $B$ be a $(\beta+1)$-dimensional ball of radius $r_{\beta+1}(K)$ contained in $K$. Writing $z$ to denote the center of $B$,
it follows that there exists a segment $s\subset B\cap (z+L_\beta^\bot)$ of length $2r_{\beta+1}(K)$, where $s=[z-r_{\beta+1}(K)v,z+r_{\beta+1}(K)v]$
for a suitable $v\in L_\beta^\bot\cap \BS^{n-1}$. On the other hand, according to Lemma~\ref{rm+1A}, there exists a $(\beta-1)$-dimensional affine subspace
$A_{\beta-1}\subset L_\beta$ (note that this is obviously true if $\beta=1$) such that
\begin{equation}
\label{K|LbetaAbeta1}
K|L_\beta\subset A_{\beta-1}+nr_\beta(K|L_\beta)B^n\subset A_{\beta-1}+n^2r_\beta(K)B^n.
\end{equation}
Let $L_{\beta-1}\subset L_\beta$ be the $(\beta-1)$-dimensional linear subspace parallel to $A_{\beta-1}$.

We consider the $(\beta+1)$-dimensional linear subspace
$L_{\beta+1}={\rm lin}\{L_\beta,v\}$ and an orthonormal basis $u_1,\ldots,u_n$ of $\R^n$, where
$u_1,\ldots,u_\beta\in L_\beta$,  $L_{\beta-1}=u_\beta^\bot\cap L_\beta$
and $u_{\beta+1}=v$.
For $i=0,\ldots,n$, we define $K_i$ by reverse induction; namely, $K_n=K$ and $K_{i-1}=S_{u_i^\bot}K_i$
for $i=1,\ldots,n$. We deduce from \eqref{steiner} and reverse induction on $i$ that
\begin{equation}
\label{VbetaKi}
V_\beta(K_i)\leq V_\beta(K)\mbox{ \ for $i=0,\ldots,n$.}
\end{equation}
We observe that
$$
K|L_{\beta+1}=K_{\beta+1}|L_{\beta+1}\subset K_{\beta+1}
$$
and
$$
[(z|L_{\beta+1})-r_{\beta+1}(K)v,(z|L_{\beta+1})+r_{\beta+1}(K)v]\subset K_{\beta+1}\cap L_{\beta+1},
$$
which in turn yields
$$
K|L_\beta=K_{\beta}|L_\beta\subset K_{\beta}
\mbox{ \ and \ }
[(z|L_\beta)-r_{\beta+1}(K)v,(z|L_\beta)+r_{\beta+1}(K)v]\subset K_{\beta}\cap L_{\beta+1}.
$$
By construction, $K_0$ is unconditional with respect to the basis $u_1,\ldots,u_n$. Moreover, \eqref{K|LbetaAbeta1} and an application of \eqref{steinerequa} in $L_\beta$ show that
\begin{eqnarray}
\label{K0Vbeta0}
V_\beta(K_0\cap L_\beta)&=&V_\beta(K|L_\beta),\\
\label{K0Lbetathin0}
K_0\cap L_\beta&\subset &L_{\beta-1}+n^2r_\beta(K)B^n,\\
\label{K0v0}
[-r_{\beta+1}(K)v,r_{\beta+1}(K)v]&\subset& K_0.
\end{eqnarray}

We now transform $K_0$ into a convex body whose sections parallel to $L_{\beta-1}$ are $(\beta-1)$-dimensional balls, by possibly further symmetrizing $K_0$.
If $\beta=1$, then let $K_0^{(\beta-1)}=K_0$. If $\beta\geq 2$, then
by applying repeated Steiner symmetrizations with respect to suitable subspaces
$u^\bot$ with $u\in L_{\beta-1}$ and taking limits, we obtain a compact convex set $K_0^{(\beta-1)}$ (see \cite{Gru07} or \cite[\S 19]{Leicht}) such that
\begin{eqnarray*}
K_0^{(\beta-1)}|L_{\beta-1}^\bot&=&K_0|L_{\beta-1}^\bot,\\
K_0^{(\beta-1)}\cap (x+L_{\beta-1})&{\rm is} &\mbox{a $(\beta-1)$-ball centered at $x$ if $x\in(\text{\rm relint }K_0)|L_{\beta-1}^\bot$},\\
V_{\beta-1}(K_0^{(\beta-1)}\cap (x+L_{\beta-1}))&=&V_{\beta-1}(K_0\cap (x+L_{\beta-1}))\mbox{ if $x\in(\text{\rm relint }K_0)|L_{\beta-1}^\bot$}.
\end{eqnarray*}
From the properties \eqref{steiner} and \eqref{steinerequa} of the Steiner symmetrization, and from
\eqref{VbetaKi}, \eqref{K0Vbeta0}, \eqref{K0Lbetathin0} and \eqref{K0v0} we deduce that
\begin{eqnarray}
\label{K0betaVbeta}
V_\beta(K_0^{(\beta-1)})&\leq &V_\beta(K),\\
\label{K0Vbeta}
V_\beta(K_0^{(\beta-1)}\cap L_\beta)&=&V_\beta(K|L_\beta),\\
\label{K0Lbetathin}
K_0^{(\beta-1)}\cap L_\beta&\subset &L_{\beta-1}+n^2r_\beta(K)B^n,\\
\label{K0v}
[-r_{\beta+1}(K)v,r_{\beta+1}(K)v]&\subset& K_0^{(\beta-1)}.
\end{eqnarray}
It follows from \eqref{K0betaVbeta}, \eqref{K0Vbeta} and \eqref{K0v} that  the unconditional $(\beta+1)$-dimensional compact convex set
\begin{equation}
\label{tildeKdef}
\widetilde{K}=[K_0^{(\beta-1)}\cap L_\beta,-r_{\beta+1}(K)v,r_{\beta+1}(K)v]
\end{equation}
satisfies
\begin{equation}
\label{KtildeK}
V_\beta(\widetilde{K})\leq V_\beta(K)\mbox{ \ and \ }
V_\beta(\widetilde{K}\cap L_\beta)=V_\beta(K|L_\beta).
\end{equation}
We define the set
\begin{equation}
\label{tildeK0def}
\widetilde{K}_0=\widetilde{K}\cap L_\beta=K_0^{(\beta-1)}\cap L_\beta,
\end{equation}
which has axial rotational symmetry around $\R u_\beta$ and
 according to \eqref{K0Lbetathin} satisfies
\begin{equation}
\label{tildeK0thin}
\widetilde{K}_0\subset L_{\beta-1}+n^2r_\beta(K)B^n.
\end{equation}
According to \eqref{KtildeK}, Proposition~\ref{projstab} will follow once we have shown that
\begin{equation}
\label{projstabtildeK}
V_\beta(\widetilde{K})\geq \left(1+\frac1{2^{n+2}n^5}\cdot \frac{r_{\beta+1}(K)^2}{r_\beta(K)^2}\right)V_\beta(\widetilde{K}_0).
\end{equation}
For the proof, we will use \eqref{tildeKdef}, \eqref{tildeK0def} and \eqref{tildeK0thin}. We distinguish the cases $\beta=1$ and $\beta\geq 2$.

\medskip

\noindent {\bf Case 1:} $\beta=1$.
In this case, $\widetilde{K}$ is a rhombus, one of the diagonals is $\widetilde{K}_0=\widetilde{K}\cap L_1=K_0\cap L_1$, which has length
$$
V_1(\widetilde{K}_0)=V_1(K_0\cap L_1)=V_1(K|L_1)\le 2r_1(K|L_1)\le 2nr_1(K),
$$
and the other one is of length
$2r_2(K)$. Hence it follows that
\begin{eqnarray*}
V_1(\widetilde{K})&=&\frac{1}{2}\cdot 4\cdot \sqrt{\left(\frac{V_1(\widetilde{K}_0)}{2}\right)^2+r_2(K)^2}=
V_1(\widetilde{K}_0)\sqrt{1+\frac{4r_2(K)^2}{V_1(\widetilde{K}_0)^2}}\\
&\geq&V_1(\widetilde{K}_0)\sqrt{1+\frac{r_2(K)^2}{n^2r_1(K)^2}}\geq
V_1(\widetilde{K}_0)\left(1+\frac{r_2(K)^2}{4n^2r_1(K)^2}\right).
\end{eqnarray*}

\noindent {\bf Case 2:} $\beta\geq 2$.
We write $\partial \widetilde{K}$ to denote the relative boundary of $\widetilde{K}$ in $L_{\beta+1}$,
$\partial \widetilde{K}_0$ to denote the relative boundary of $\widetilde{K}_0$ in $L_{\beta}$
and
define $f:\,\widetilde{K}_0\to \R_{\geq 0}$  to be the concave function satisfying
$x+f(x)v\in\partial \widetilde{K}$ for $x\in \widetilde{K}_0$. In particular, we have
\begin{equation}
\label{ftildeK}
V_\beta(\widetilde{K})=\int_{\widetilde{K}_0}\sqrt{1+\|\nabla f(x)\|^2} \,dx,
\end{equation}
where the integration is with respect to the $\beta$-dimensional Lebesgue measure on $L_\beta$, and the derivative $\nabla f$ of $f$ is well-defined for almost all $x\in \widetilde{K}_0$.

Let $x\in\text{\rm relint } \widetilde{K}_0$ be such that $\nabla f(x)$ exists. Hence  $x\neq o$ and there exist
$z\in \partial\widetilde{K}_0$ and $t\in(0,1)$ such that $x=tz$. Since $f$ is concave and
$\widetilde{K}$ is a double cone, we deduce that
\begin{eqnarray*}
f(x)+\langle \nabla f(x),y-x\rangle&\geq& f(y) \mbox{ \ for  $y\in \widetilde{K}_0$},\\
f(x)+\langle \nabla f(x),-x\rangle&=&f(o),\\
f(x)+\langle \nabla f(x),z-x\rangle&=&f(z)=0.
\end{eqnarray*}
Using also \eqref{tildeKdef}, we deduce that if $x=tz$  for $z\in \partial\widetilde{K}_0$ and $t\in(0,1)$, and $\nabla f(x)$ exists, then
\begin{eqnarray}
\label{ftildeKo}
f(o)&= & r_{\beta+1}(K),\\
\label{ftildeKy}
f(o)+\langle \nabla f(x),y\rangle\geq f(y)&\geq&0\mbox{ \ for $y\in \widetilde{K}_0$},\\
\label{ftildeKz}
f(o)+\langle \nabla f(x),z\rangle= f(z)&=&0.
\end{eqnarray}

Next we define
\begin{eqnarray*}
\Xi&=&\{z\in \partial\widetilde{K}_0:\,z|L_{\beta-1}\in \mbox{$\frac12$}\,\widetilde{K}_0\}\\
\widetilde{\Xi}&=&\{tz:\,z\in\Xi\mbox{ and }t\in[0,1]\}.
\end{eqnarray*}
We claim that if $x\in\widetilde{\Xi}$ and  $\nabla f(x)$ exists, then
\begin{equation}
\label{nablaflowerest}
\sqrt{1+\|\nabla f(x)\|^2}\geq 1+\frac{r_{\beta+1}(K)^2}{16n^4r_\beta(K)^2}.
\end{equation}
For the proof, we can write $x$ as
 $x=tz$ for $z\in\Xi$  and $t\in(0,1]$ and we have  $z_0=z|L_{\beta-1}\in\frac12\,\widetilde{K}_0$. There exists an $s\in\R$
such that $z-z_0=su_\beta$, and hence $y=2z_0\in \widetilde{K}_0$ satisfies
\begin{equation}
\label{zys}
 2su_\beta=2z-y.
\end{equation}
We deduce from \eqref{ftildeKy}, \eqref{ftildeKz} and \eqref{zys} that
\begin{equation}
\label{fos}
f(o)+\langle \nabla f(x),2su_\beta\rangle=2\left[f(o)+\langle\nabla f(x),z\rangle\right]-\left[f(o)+\langle \nabla f(x),y\rangle\right]\leq 0.
\end{equation}
Here $0<|s|\leq n^2r_\beta(K)$ by \eqref{tildeK0thin} and \eqref{fos}. Therefore \eqref{ftildeKo} and \eqref{fos} imply that
$$
\|\nabla f(x)\|\geq\frac{f(o)}{2|s|}\geq \frac{r_{\beta+1}(K)}{2n^2r_\beta(K)}.
$$
Using $\sqrt{1+t}\geq 1+\frac14\,t$ for $t\in[0,1]$, we thus obtain \eqref{nablaflowerest}. Let $\lambda_\beta$ be the Lebesgue-measure in $L_\beta$. Combining \eqref{ftildeK} and \eqref{nablaflowerest}, we get
\begin{align}
\label{VbetatildeKest}
V_\beta(\widetilde{K})&=\int_{\widetilde{K}_0}\sqrt{1+\|\nabla f(x)\|^2}\,dx\nonumber\\
&=\int_{\widetilde{K}_0\setminus\widetilde{\Xi}}\sqrt{1+\|\nabla f(x)\|^2}\,dx+\int_{\widetilde{\Xi}}\sqrt{1+\|\nabla f(x)\|^2}\,dx\nonumber\\
&\ge \lambda_\beta(\widetilde{K}_0\setminus\widetilde{\Xi})+
\left(1+\frac{1}{4} \left(\frac{r_{\beta+1}(K)}{2n^2r_\beta(K)}\right)^2\right)
\lambda_\beta(\widetilde{\Xi})\nonumber\\
&= V_\beta(\widetilde{K}_0)+
\frac{r_{\beta+1}(K)^2}{16n^4r_\beta(K)^2}\cdot \lambda_\beta(\widetilde{\Xi}).
\end{align}

What is left to find  is a lower bound for the ratio $\lambda_\beta(\widetilde{\Xi})/V_\beta(\widetilde{K}_0)$. As $\widetilde{K}_0$ is unconditional within $L_\beta$, we have
$$
{\rm conv}\,\widetilde{\Xi}=\{x\in \widetilde{K}_0:\,x|L_{\beta-1}\in \mbox{$\frac12$}\,\widetilde{K}_0\},
$$
which   is again an unconditional set. We observe that the linear transformation $\Phi:\,L_\beta\to L_\beta$
with $\Phi u_\beta=u_\beta$ and $\Phi u_i=\frac12\,u_i$ for $i=1,\ldots,\beta-1$ satisfies
$\Phi \widetilde{K}_0\subset {\rm conv}\,\widetilde{\Xi}$, therefore
\begin{equation}
\label{halftildeK0}
V_\beta({\rm conv}\,\widetilde{\Xi})\geq 2^{-(\beta-1)}V_\beta(\widetilde{K}_0).
\end{equation}
As $\widetilde{K}_0$ has axial rotational symmetry around $\R u_\beta$, the same holds for $\widetilde{\Xi}$. In particular,
there exist $\varrho>0$ and $s_0>0$ such that
\begin{eqnarray*}
L_{\beta-1}\cap {\rm conv}\,\widetilde{\Xi}&= & L_{\beta-1}\cap \varrho B^n,\\
Z &\subset&{\rm conv}\,\widetilde{\Xi}\mbox{ \ for the cylinder $(L_{\beta-1}\cap \varrho B^n)+[-s_0u_\beta,s_0u_\beta]$,}\\
({\rm conv}\,\widetilde{\Xi})\setminus Z&\subset&\widetilde{\Xi}.
\end{eqnarray*}
Since $\widetilde{\Xi}\cap Z$ is the union of two cones with height $s_0$, using \eqref{halftildeK0} we deduce  that
$$
\frac{\lambda_\beta(\widetilde{\Xi})}{V_\beta(\widetilde{K}_0)}\geq
\frac{\lambda_\beta(\widetilde{\Xi})}{2^{\beta-1}V_\beta({\rm conv}\,\widetilde{\Xi})}
=\frac{1}{2^{\beta-1}}\frac{\lambda_\beta(\widetilde{\Xi}\setminus Z)+
\lambda_\beta(\widetilde{\Xi}\cap Z)}{V_\beta(\widetilde{\Xi}\setminus Z)+
V_\beta(  Z)}
\geq
\frac{\lambda_\beta(\widetilde{\Xi}\cap Z)}{2^{\beta-1}V_\beta(Z)}=\frac1{\beta\,2^{\beta-1}}.
$$
It follows from \eqref{VbetatildeKest} and $\beta\leq n-1$ that
$$
V_\beta(\widetilde{K})\geq\left(1+\frac1{\beta\,2^{\beta-1}16n^4}\cdot
\frac{r_{\beta+1}(K)^2}{r_\beta(K)^2} \right) V_\beta(\widetilde{K}_0)
\geq\left(1+\frac1{2^{n+2}n^5}\cdot
\frac{r_{\beta+1}(K)^2}{r_\beta(K)^2} \right) V_\beta(\widetilde{K}_0).
$$
We conclude \eqref{projstabtildeK}, and in turn Proposition~\ref{projstab}.
\end{proof}

\medskip

\begin{prop}
\label{oneKstab}
Let $K_1$ be a compact convex set in $\R^n$,  let $Z_2,\ldots,Z_{m}$, $2\leq m\leq n$, be zonoids  in $\R^n$, and let $\alpha_1,\ldots,\alpha_m\in\N$ be such that
$\alpha_1+\cdots+\alpha_m=n$. Let $\varepsilon\in [0,1]$. Suppose that
\begin{align}
0&<V_{\alpha_1}(K_1)\prod_{i=2}^m V_{\alpha_i}(Z_i)
  \leq (1+\varepsilon)
\binom{n}{\alpha_1,\ldots, \alpha_m}
V(K_{1}[\alpha_1],Z_2[\alpha_2],\ldots,Z_{m}[\alpha_{m}]).\label{hypo1}
\end{align}
Then there exists an affine subspace $A_1\in\MA(n,\alpha_1)$ such that
$$
K_{1}\subset A_1+c\sqrt{\varepsilon}\, r_{\alpha_1}(K_1) B^n \quad
\mbox{ and } \quad
r_{\alpha_1}(K_1|A_1)\geq \frac{r_{\alpha_1}(K_1)}n,
$$
where $c=2^{\frac{n+2}2}n^{\frac72}$, and $A_1$ can be chosen as a linear subspace if $K_1$ is a zonoid.
\end{prop}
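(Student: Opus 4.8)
The plan is to run the proof of Theorem~\ref{ReverseAlexandrovFenchelKZ} in the special case $\gamma=\beta=\alpha_1$ (no copies of $B^n$ present), but to replace the lossy pointwise bound $V_{\alpha_1}(K_1|L)\le V_{\alpha_1}(K_1)$ used there by the \emph{quantitative} estimate of Proposition~\ref{projstab}, and then to read off the geometric conclusion from Lemma~\ref{rm+1A}. Since all functionals in sight (mixed volumes, intrinsic volumes, the radii $r_j$) are translation invariant, we may assume that $K_1$ is centred at $o$ — hence $o$-symmetric — whenever $K_1$ is a zonoid. From \eqref{hypo1} we have $V(K_1[\alpha_1],Z_2[\alpha_2],\ldots,Z_m[\alpha_m])>0$, so $\dim(K_1)\ge\alpha_1$ and $r_{\alpha_1}(K_1)>0$; since $m\ge 2$ and $\alpha_2,\ldots,\alpha_m\ge 1$, we also have $1\le\alpha_1\le n-1$ and $n\ge 2$, the range in which Proposition~\ref{projstab} applies to $K_1$ with $\beta=\alpha_1$. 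Set $c_0:=(2^{n+2}n^5)^{-1}$ and $\tau:=r_{\alpha_1+1}(K_1)/r_{\alpha_1}(K_1)\in[0,1]$. The starting point is the exact representation obtained by specialising \eqref{metaref1} to a single general body $K_1$ taken with multiplicity $\alpha_1$ and using $v^{(\alpha_1)}(C,\ldots,C)=V_{\alpha_1}(C)$ for $\dim(C)\le\alpha_1$:
\begin{align*}
&\binom{n}{\alpha_1,\ldots,\alpha_m}V(K_1[\alpha_1],Z_2[\alpha_2],\ldots,Z_m[\alpha_m])\\
&\qquad=\kappa_{\alpha_2}\cdots\kappa_{\alpha_m}\int_{\MG(n,\alpha_2)}\!\!\cdots\!\!\int_{\MG(n,\alpha_m)}[U_2,\ldots,U_m]_{n-\alpha_1}\,V_{\alpha_1}\bigl(K_1|(U_2+\cdots+U_m)^\bot\bigr)\\
&\qquad\qquad\times\varrho_{(\alpha_2)}(Z_2,dU_2)\cdots\varrho_{(\alpha_m)}(Z_m,dU_m).
\end{align*}

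Next I would insert the projection estimate. On the set where $[U_2,\ldots,U_m]_{n-\alpha_1}>0$, the subspaces $U_2,\ldots,U_m$ form a direct sum of dimension $n-\alpha_1$, so $L:=(U_2+\cdots+U_m)^\bot\in\MG(n,\alpha_1)$, and Proposition~\ref{projstab} gives $V_{\alpha_1}(K_1|L)\le(1+c_0\tau^2)^{-1}V_{\alpha_1}(K_1)$ (this also holds trivially when $r_{\alpha_1+1}(K_1)=0$, since then it reduces to Lemma~\ref{proj}). Substituting this bound into the representation above, and then using $[U_2,\ldots,U_m]_{n-\alpha_1}\le 1$ together with $\kappa_{\alpha_i}\varrho_{(\alpha_i)}(Z_i,\MG(n,\alpha_i))=V_{\alpha_i}(Z_i)$ from \eqref{eq2}, I obtain
$$
\binom{n}{\alpha_1,\ldots,\alpha_m}V(K_1[\alpha_1],Z_2[\alpha_2],\ldots,Z_m[\alpha_m])\le\frac{1}{1+c_0\tau^2}\,V_{\alpha_1}(K_1)\prod_{i=2}^mV_{\alpha_i}(Z_i).
$$

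Comparing this with the upper bound in \eqref{hypo1} and dividing by the positive number $V_{\alpha_1}(K_1)\prod_{i=2}^mV_{\alpha_i}(Z_i)$ gives $1\le(1+\varepsilon)(1+c_0\tau^2)^{-1}$, hence $c_0\tau^2\le\varepsilon$, that is,
$$
r_{\alpha_1+1}(K_1)\le 2^{\frac{n+2}{2}}n^{\frac52}\sqrt{\varepsilon}\,r_{\alpha_1}(K_1).
$$
Finally I would apply Lemma~\ref{rm+1A}(ii), with $m$ there replaced by $\alpha_1$: there is $A_1\in\MA(n,\alpha_1)$ — which may be chosen as a linear subspace because $K_1$ is $o$-symmetric when it is a zonoid — such that $K_1\subset A_1+n\,r_{\alpha_1+1}(K_1)B^n$ and $r_{\alpha_1}(K_1|A_1)\ge\frac1n\,r_{\alpha_1}(K_1)$. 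Combining the last two displays yields $K_1\subset A_1+2^{\frac{n+2}{2}}n^{\frac72}\sqrt{\varepsilon}\,r_{\alpha_1}(K_1)B^n$, which is exactly the assertion with $c=2^{\frac{n+2}{2}}n^{\frac72}$.

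I do not expect a serious obstacle: the proposition is essentially an assembly of the integral formula \eqref{metaref1}, Proposition~\ref{projstab} and Lemma~\ref{rm+1A}. The one point that makes the argument go through is that the improvement factor $(1+c_0\tau^2)^{-1}$ produced by Proposition~\ref{projstab} is uniform over all $L\in\MG(n,\alpha_1)$ and can therefore be pulled out of the integral in the second step; the remaining care is purely bookkeeping of the constants (so as to land on exactly $c=2^{(n+2)/2}n^{7/2}$) and the harmless degenerate case $r_{\alpha_1+1}(K_1)=0$, in which $K_1$ already lies in an $\alpha_1$-flat and one takes $A_1$ to be (a translate of) its affine hull.
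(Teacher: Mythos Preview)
Your proof is correct and follows essentially the same route as the paper: the paper first picks a maximising $L_1\in\MG(n,\alpha_1)$, bounds the projection term in the integral representation \eqref{zonoidinmixedvol3} by $V_{\alpha_1}(K_1|L_1)$, deduces $V_{\alpha_1}(K_1)\le(1+\varepsilon)V_{\alpha_1}(K_1|L_1)$ from the hypothesis, and only then applies Proposition~\ref{projstab} and Lemma~\ref{rm+1A}. You merge the first two steps by inserting the uniform bound $V_{\alpha_1}(K_1|L)\le(1+c_0\tau^2)^{-1}V_{\alpha_1}(K_1)$ coming from Proposition~\ref{projstab} directly into the integral, which is an equivalent rearrangement of the same ingredients and gives the same constant $c=2^{(n+2)/2}n^{7/2}$.
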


\begin{proof}
Let $L_1\in \MG(n,\alpha_1)$ be such that
$$
V_{\alpha_1}(K_1|L_1)=\max\left\{V_{\alpha_1}(K_1|L) : L\in \MG(n,\alpha_1)\right\}.
$$
A special case of \eqref{zonoidinmixedvol3}, \eqref{Number1} and \eqref{eq2} imply that
$$
\binom{n}{ \alpha_1,\ldots, \alpha_m}
V(K_{1}[\alpha_1],Z_2[\alpha_2],\ldots,Z_{m}[\alpha_{m}])
\leq
V_{\alpha_1}(K_1|L_1)\cdot
 V_{\alpha_2}(Z_2)\cdots V_{\alpha_{m}}(Z_{\alpha_m}).
$$
It follows from the assumption \eqref{hypo1} in Proposition~\ref{oneKstab} that
$V_{\alpha_1}(K_1)\leq(1+\varepsilon)V_{\alpha_1}(K_1|L_1)$, and hence
Proposition~\ref{projstab}  yields
$$
r_{\alpha_1+1}(K_1)\leq 2^{\frac{n+2}2}n^{\frac52}r_{\alpha_1}(K_1)\sqrt{\varepsilon}.
$$
From Lemma~\ref{rm+1A} we deduce the existence of an  affine subspace $A_1\in\MA(n,\alpha_1)$ such that
$$
K_1\subset A_1+2^{\frac{n+2}2}n^{\frac72}r_{\alpha_1}(K_1)\sqrt{\varepsilon} \,B^n \quad
\mbox{ and } \quad
r_{\alpha_1}(K_1|A_1)\geq \frac{r_{\alpha_1}(K_1)}n,
$$
where we can choose $A_1\in\MG(n,\alpha_1)$ if $K_1$ is a zonoid.
\end{proof}

\begin{lemma}
\label{anglesubspaces}
Let $2\leq m\leq n$, let $K_1,\ldots,K_m$ be compact, convex sets in $\R^n$,  and let $\alpha_1,\ldots,\alpha_m\geq 1$ be integers satisfying
$\alpha_1+\cdots+\alpha_m=n$. Let $\varepsilon \in [0,1]$. Assume that
 for $i=1,\ldots,m$ there exist $q_i\in K_i$ and linear   subspace
$L_i\in\MG(n,\alpha_i)$ such that $K_i|L_i$ is a zonoid for $i=2,\ldots,m$,
\begin{equation}
\label{KiLinear}
K_i-q_i\subset (K_i|L_i)+\varepsilon r_{\alpha_i}(K_i) \,B^n\quad
\mbox{ and } \quad
r_{\alpha_i}(K_i|L_i)\geq \frac{r_{\alpha_i}(K_i)}n>0,
\end{equation}
and $K_1,\ldots,K_m$ satisfy
\begin{align}
\label{KiLicond}
&\binom{n}{ \alpha_1,\ldots, \alpha_m}
V(K_1[\alpha_1],\ldots,K_m[\alpha_m])\nonumber\\
&\qquad\qquad \leq\prod_{i=1}^mV_{\alpha_i}(K_i)\leq (1+\varepsilon)
\binom{n}{ \alpha_1,\ldots, \alpha_m}
V(K_1[\alpha_1],\ldots,K_m[\alpha_m]).
\end{align}
Then
$$
[L_1,\ldots,L_m]\geq 1-n^{5} \varepsilon.
$$
\end{lemma}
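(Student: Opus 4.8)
The plan is to exploit the integral representation \eqref{metaref1} (equivalently \eqref{veryspecial}), which expresses the normalized mixed volume $\binom{n}{\alpha_1,\ldots,\alpha_m}V(K_1[\alpha_1],\ldots,K_m[\alpha_m])$ essentially as an average of the bracket $[U_1,\ldots,U_m]_{n}$ against the product of projection generating measures $\varrho_{(\alpha_i)}(K_i|L_i,\cdot)$, up to the normalizing factors $\kappa_{\alpha_i}$ and error terms coming from the fact that the $K_i$ are only \emph{close} to being contained in $L_i+q_i$. First I would reduce to the zonoid case: by \eqref{KiLinear} each $K_i|L_i$ is a zonoid whose generating measure lives on $\BS^{n-1}\cap L_i$, and $V(K_1[\alpha_1],\ldots,K_m[\alpha_m])$ can be compared to $V((K_1|L_1)[\alpha_1],\ldots,(K_m|L_m)[\alpha_m])$ with a multiplicative error of the form $1+O(n^c\varepsilon)$, using monotonicity of mixed volumes together with the containments in \eqref{KiLinear} and the scale estimates $r_{\alpha_i}(K_i|L_i)\ge r_{\alpha_i}(K_i)/n$ (which bound $r_{\alpha_i}(K_i)$, hence the Hausdorff-type perturbation, in terms of the ``size'' of $K_i|L_i$ inside $L_i$).

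Next, for the bodies $Z_i:=K_i|L_i\subset L_i$ I would apply \eqref{veryspecial} with $\beta=0$:
\begin{equation}
\label{myplaneq}
\binom{n}{\alpha_1,\ldots,\alpha_m}V(Z_1[\alpha_1],\ldots,Z_m[\alpha_m])=\kappa_{\alpha_1}\cdots\kappa_{\alpha_m}\int\cdots\int[U_1,\ldots,U_m]_{n}\,\varrho_{(\alpha_1)}(Z_1,dU_1)\cdots\varrho_{(\alpha_m)}(Z_m,dU_m),
\end{equation}
and I would combine this with $\prod V_{\alpha_i}(Z_i)=\prod\kappa_{\alpha_i}\varrho_{(\alpha_i)}(Z_i,\MG(n,\alpha_i))$ from \eqref{eq2}. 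The hypothesis \eqref{KiLicond}, transported to the $Z_i$ via the reduction step, then says that the $\varrho_{(\alpha_i)}$-average of $1-[U_1,\ldots,U_m]_n$ is at most $O(n^c\varepsilon)$. Since $\varrho_{(\alpha_i)}(Z_i,\cdot)$ is concentrated near $L_i$ in the sense that $L_i$ itself carries a definite proportion of the total mass --- this is where $r_{\alpha_i}(Z_i)\ge r_{\alpha_i}(K_i)/n>0$ and the fact that $Z_i$ spans $L_i$ are used, to guarantee that a ball of subspaces around $L_i$ has $\varrho_{(\alpha_i)}$-mass bounded below by a dimensional constant --- a Markov/Chebyshev argument forces $[L_1,\ldots,L_m]_n$ itself (the value of the bracket at the ``central'' subspaces) to satisfy $1-[L_1,\ldots,L_m]_n\le O(n^c\varepsilon)$, using the continuity and the quantitative Lipschitz behaviour of $(U_1,\ldots,U_m)\mapsto[U_1,\ldots,U_m]_n$ near a tuple of pairwise orthogonal subspaces.

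The main obstacle I anticipate is the quantitative concentration step: turning ``the average of $1-[U_1,\ldots,U_m]_n$ is small'' into ``$1-[L_1,\ldots,L_m]_n$ is small'' requires a lower bound, uniform in the zonoid, on the $\varrho_{(\alpha_i)}$-mass that sits within a fixed-radius neighbourhood of $L_i$, and simultaneously an upper bound on the $\varrho_{(\alpha_i)}$-mass far from $L_i$ in terms of $r_{\alpha_i}$-data; both must be tracked with explicit powers of $n$ and of $2$ to land on the stated constant $n^5$. Here I would use that $Z_i=K_i|L_i$ is genuinely $\alpha_i$-dimensional with inradius controlled from below, so that a positive fraction of its generating measure points ``transversally enough'' inside $L_i$, giving the required mass bound via \eqref{eqA}; the bracket estimate near orthogonality is then the elementary multilinear-algebra fact that $[L_1,\ldots,L_m]_n$ depends smoothly on the subspaces with controlled derivative. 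Keeping all the dimensional constants from the reduction step, from \eqref{myplaneq}, and from the bracket Lipschitz bound within the claimed $n^5\varepsilon$ will be the delicate bookkeeping, but no single step is conceptually hard once the representation \eqref{veryspecial} and Proposition~\ref{oneKstab}/Lemma~\ref{rm+1A} are in hand.
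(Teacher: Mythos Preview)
Your reduction step---replace each $K_i$ by $C_i:=K_i|L_i$ at the cost of an error controlled by $\varepsilon\, r_{\alpha_i}(K_i)$, then expand the mixed volume multilinearly---is the same as the paper's. The paper collects the cross terms into a single quantity $\Omega$ and bounds each summand using Theorem~\ref{ReverseAlexandrovFenchelKZ} (not Proposition~\ref{oneKstab} or Lemma~\ref{rm+1A}, which play no role in this lemma), together with the estimate $r_{\alpha_i}(K_i)^{\gamma_i}V_{\alpha_i-\gamma_i}(C_i)\le n^{\gamma_i}\binom{\alpha_i}{\gamma_i}\kappa_{\gamma_i}^{-1}V_{\alpha_i}(C_i)$ coming from the inradius hypothesis. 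That is where the constant $n^5$ actually originates.

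Where your plan takes a needless detour is in the treatment of the main term. You propose to write $V(C_1[\alpha_1],\ldots,C_m[\alpha_m])$ via \eqref{veryspecial} as an integral of $[U_1,\ldots,U_m]_n$ against $\prod_i\varrho_{(\alpha_i)}(C_i,\cdot)$ and then run a Markov/Chebyshev argument to pass from the \emph{average} of $1-[U_1,\ldots,U_m]_n$ being small to $1-[L_1,\ldots,L_m]_n$ being small. But each $C_i$ lies \emph{inside} the $\alpha_i$-dimensional subspace $L_i$: the generating measure of $C_i$ (for $i\ge 2$) is supported on $\BS^{n-1}\cap L_i$, and any $\alpha_i$ linearly independent vectors in $L_i$ span $L_i$ itself. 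Consequently $\varrho_{(\alpha_i)}(C_i,\cdot)$ is a \emph{Dirac mass at $L_i$}, the integral in \eqref{veryspecial} collapses to a point evaluation, and one has directly
\[
\binom{n}{\alpha_1,\ldots,\alpha_m}V(C_1[\alpha_1],\ldots,C_m[\alpha_m])=[L_1,\ldots,L_m]\prod_{i=1}^mV_{\alpha_i}(C_i).
\]
This identity in fact holds without any zonoid hypothesis---it is the elementary formula for the mixed volume of bodies lying in subspaces of complementary dimensions---which matters because the hypothesis only makes $C_i$ a zonoid for $i\ge 2$; your claim that ``each $K_i|L_i$ is a zonoid'' is not given by \eqref{KiLinear}. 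Your anticipated ``main obstacle'' (concentration estimates for $\varrho_{(\alpha_i)}$ and Lipschitz bounds for the bracket) therefore never arises. Once the main term is recognized as the identity above, combining it with the bound on $\Omega$ and with $V_{\alpha_i}(C_i)\le V_{\alpha_i}(K_i)$ from Lemma~\ref{proj} immediately yields $1-n^5\varepsilon\le[L_1,\ldots,L_m]$.
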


\begin{proof} If $\varepsilon\ge n^{-4}$, then $1-n^5\varepsilon\le -1$ and hence there is nothing to show. In the following, we may assume that $\varepsilon< n^{-4}$.

By assumption, the sets $C_i=K_i|L_i$ are zonoids for $i=2,\ldots,m$. Using first \eqref{KiLicond} and then \eqref{KiLinear} we get
\begin{align}
\nonumber
\prod_{i=1}^mV_{\alpha_i}(K_i)
&\leq (1+\varepsilon)
\binom{n }{ \alpha_1,\ldots, \alpha_m}
V(K_1[\alpha_1],\ldots,K_m[\alpha_m])\\
\nonumber
&\leq \binom{n }{ \alpha_1,\ldots, \alpha_m}
V(K_1[\alpha_1],\ldots,K_m[\alpha_m]) +\varepsilon\cdot \prod_{i=1}^mV_{\alpha_i}(K_i)\\
\nonumber
&\leq \binom{n }{ \alpha_1,\ldots, \alpha_m}
V((C_1+\varepsilon\, r_{\alpha_1}(K_1) \,B^n)[\alpha_1],\ldots,
(C_m+\varepsilon\,r_{\alpha_m}(K_m) \,B^n)[\alpha_m])\\
\nonumber
&\quad  +\varepsilon\cdot \prod_{i=1}^mV_{\alpha_i}(K_i)\\
\nonumber
&= \binom{n }{ \alpha_1,\ldots, \alpha_m} V(C_1[\alpha_1],\ldots,C_m[\alpha_m])+
 \Omega+\varepsilon\cdot \prod_{i=1}^mV_{\alpha_i}(K_i)\\
\label{Omega-appearK}
&= \left(\prod_{i=1}^mV_{\alpha_i}(C_i)\right)[L_1,\ldots,L_m]+
 \Omega+\varepsilon\cdot \prod_{i=1}^mV_{\alpha_i}(K_i),
\end{align}
where with
$$
\Gamma=\{(\gamma_1,\ldots,\gamma_m):\,\gamma_i\in\{0,\ldots,\alpha_i\}\mbox{ for }i=1,\ldots,m \mbox{ and }
\sum_{i=1}^m\gamma_i\geq 1\}
$$
and by the linearity of  mixed volumes we have
\begin{align}
\Omega&=\binom{n }{ \alpha_1,\ldots, \alpha_m}\big[ V((C_1+\varepsilon\,r_{\alpha_1}(K_1) \,B^n)[\alpha_1],\ldots,
(C_m+\varepsilon\,r_{\alpha_m}(K_m)\,B^n)[\alpha_m])\nonumber\\
&\qquad\qquad\qquad\qquad  -V(C_1[\alpha_1],\ldots,C_m[\alpha_m])\big]\nonumber\\
&=\binom{n }{ \alpha_1,\ldots, \alpha_m} \sum_{(\gamma_1,\ldots,\gamma_m)\in\Gamma}
\varepsilon^{\sum_{i=1}^m\gamma_i}\left(\prod_{i=1}^m r_{\alpha_i}(K_i)^{\gamma_i}\right)
\prod_{i=1}^m \binom{\alpha_i}{ \gamma_i}\nonumber\\
\label{Omega-formulaK}
&\qquad \times V\left(C_1[\alpha_1-\gamma_1],\ldots,C_m[\alpha_m-\gamma_m],B^n
\left[ \gamma_1+\cdots+\gamma_m\right]\right).
\end{align}
For each $(\gamma_1,\ldots,\gamma_m)\in\Gamma$, it follows from Theorem~\ref{ReverseAlexandrovFenchelKZ} (note that $K$ in the statement of  Theorem~\ref{ReverseAlexandrovFenchelKZ} need not be a zonoid, hence here we do not have to assume that $C_1$ is a zonoid) that
\begin{align}\label{Omega-part1K}
\nonumber
 &V\left(C_1[\alpha_1-\gamma_1],\ldots,C_m[\alpha_m-\gamma_m],B^n
 \left[ \gamma_1+\cdots+\gamma_m\right]\right)\nonumber\\
 &\qquad \leq
\binom{n }{  \alpha_1-\gamma_1,\ldots, \alpha_m-\gamma_m,\gamma_1+\cdots+\gamma_m}^{-1}\kappa_{\gamma_1+\cdots+\gamma_m}
 \left(\prod_{i=1}^m V_{\alpha_i-\gamma_i}(C_i)\right).
\end{align}
Using the notation $B_i^{\alpha_i}=B^n\cap L_i$ for $i=1,\ldots,m$, it follows from \eqref{KiLinear} that
$p_i+\frac{r_{\alpha_i}(K_i)}{n}\,B_i^{\alpha_i}\subset C_i$ for some $p_i\in C_i$. Therefore,
using Lemma~\ref{proj} at the end, we have
\begin{align}
\nonumber
r_{\alpha_i}(K_i)^{\gamma_i}V_{\alpha_i-\gamma_i}(C_i)&=
\frac{\binom{\alpha_i}{ \gamma_i}}{\kappa_{\gamma_i}}\cdot r_{\alpha_i}(K_i)^{\gamma_i}v^{(\alpha_i)}(C_i[\alpha_i-\gamma_i],
B_i^{\alpha_i}[\gamma_i])\\
\nonumber
&= \frac{n^{\gamma_i}\binom{\alpha_i}{ \gamma_i}}{\kappa_{\gamma_i}}\cdot
 v^{(\alpha_i)}\left(C_i[\alpha_i-\gamma_i],\frac{r_{\alpha_i}(K_i)}{n}\,
 B_i^{\alpha_i}[ \gamma_i]\right))\\
\nonumber
&\leq  \frac{n^{\gamma_i}\binom{\alpha_i}{ \gamma_i}}{\kappa_{\gamma_i}}\cdot
v^{(\alpha_i)}\left(C_i[\alpha_i-\gamma_i],C_i[ \gamma_i]\right)\\
\label{Omega-part2K}
&=\frac{n^{\gamma_i}\binom{\alpha_i}{ \gamma_i}}{\kappa_{\gamma_i}}\cdot V_{\alpha_i}(C_i)\leq
\frac{n^{\gamma_i}\binom{\alpha_i}{ \gamma_i}}{\kappa_{\gamma_i}}\cdot V_{\alpha_i}(K_i).
\end{align}
In the following, we use the rough upper bounds
$$
\binom{\sum_{i=1}^m\gamma_i }{ \gamma_1,\ldots ,\gamma_m}\le n^{\sum_{i=1}^m\gamma_i},\qquad
\frac{\kappa_{\sum_{i=1}^m\gamma_i}}{\prod_{i=1}^m\kappa_{ \gamma_i}}\le 1
$$
and  $t\leq e^t-1\leq 2t$, where the right inequality holds for $t\in[0,1)$.
Combining these facts with \eqref{Omega-formulaK}, \eqref{Omega-part1K}, \eqref{Omega-part2K},
we thus obtain for $n\ge 3$ that
\begin{align}\label{Omega-est1K}
\nonumber
\Omega&\le
\prod_{i=1}^m V_{\alpha_i}(K_i)\cdot
 \sum_{(\gamma_1,\ldots,\gamma_m)\in\Gamma}
 \frac{\binom{n }{ \alpha_1,\ldots, \alpha_m}}{\binom{n}{\alpha_1-\gamma_1,\ldots,\alpha_m-\gamma_m,\gamma_1+\cdots+\gamma_m}}
 \frac{\kappa_{\sum_{i=1}^m\gamma_i}}{\prod_{i=1}^m\kappa_{\gamma_i}}
 n^{\sum_{i=1}^m\gamma_i}\prod_{i=1}^m \binom{\alpha_i}{ \gamma_i}^2
 \varepsilon^{\sum_{i=1}^m\gamma_i}\nonumber\\
&\leq  \prod_{i=1}^m V_{\alpha_i}(K_i) \cdot\sum_{(\gamma_1,\ldots,\gamma_m)\in\Gamma}
\binom{\gamma_1+\cdots+\gamma_m}{\gamma_1,\ldots,\gamma_m}
n^{\sum_{i=1}^m\gamma_i}\varepsilon^{\sum_{i=1}^m\gamma_i}
\prod_{i=1}^m \binom{\alpha_i}{ \gamma_i}\nonumber\\
&\leq  \prod_{i=1}^m V_{\alpha_i}(K_i)\cdot \sum_{(\gamma_1,\ldots,\gamma_m)\in\Gamma}
\left(n^2\varepsilon\right)^{\sum_{i=1}^m\gamma_i}\prod_{i=1}^m \binom{\alpha_i}{ \gamma_i}\nonumber\\
&=\prod_{i=1}^m V_{\alpha_i}(K_i)\cdot \left[\prod_{i=1}^m\left(1+n^2\varepsilon\right)^{\alpha_i}-1\right]\nonumber\\
&\le \prod_{i=1}^m V_{\alpha_i}(K_i)\cdot \left[\exp\left(n^2\varepsilon (\alpha_1+\cdots+\alpha_m)\right)-1\right]
=\prod_{i=1}^m V_{\alpha_i}(K_i)\cdot \left(\exp\left(n^3\varepsilon\right)-1\right)\nonumber\\
&\le \prod_{i=1}^m V_{\alpha_i}(K_i)\cdot 2 n^3\varepsilon\le (n^5-1)\prod_{i=1}^m V_{\alpha_i}(K_i)\cdot \varepsilon,
\end{align}
where $n\ge 2$ was used in the last step.

Substituting \eqref{Omega-est1K} into \eqref{Omega-appearK},  for $ \varepsilon <n^{-4}$ we get
$$
\prod_{i=1}^mV_{\alpha_i}(K_i)\leq \left(\prod_{i=1}^mV_{\alpha_i}(C_i)\right)[L_1,\ldots,L_m]+
n^{5}\left(\prod_{i=1}^m V_{\alpha_i}(K_i)\right)\cdot  \varepsilon.
$$
Since $V_{\alpha_i}(C_i)\leq V_{\alpha_i}(K_i)$, according to Lemma~\ref{proj}, and $\dim(K_i)\ge \alpha_i$,   $i=1,\ldots,m$, we deduce that
$$
1-n^{5}\varepsilon\leq [L_1,\ldots,L_m],
$$
which completes the argument.
\end{proof}

\bigskip

\begin{proof}[Proof of Theorem~\ref{ReverseAlexandrovFenchelZstab}]
We deduce from the assumptions and by repeated application of  Proposition~\ref{oneKstab} that for $i=1,\ldots,m$, there exist linear subspaces
$L_i\in\MG(n,\alpha_i)$ such that
\begin{equation}
\label{ZiKiLi}
Z_i\subset (Z_i|L_i)+cr_{\alpha_i}(Z_i)\sqrt{\varepsilon} \,B^n
\mbox{ and }
r_{\alpha_i}(Z_i|L_i)\geq \frac{r_{\alpha_i}(Z_i)}n>0\mbox{ \ for $c=2^{\frac{n}2}n^{\frac92}$}.
\end{equation}
It follows from \eqref{ZiKiLi} and Lemma~\ref{anglesubspaces} that
$$
[L_1,\ldots,L_m]\geq 1-n^{5}2^{\frac{n}2}n^{\frac92}\sqrt{\varepsilon}\geq 1-n^{10}2^{\frac{n}2}\sqrt{\varepsilon},
$$
if $2^{\frac{n}2}n^{\frac92}\sqrt{\varepsilon}\le 1$. But if this is not satisfied, then the requested lower bound is trivially satisfied.
Thus we conclude  Theorem~\ref{ReverseAlexandrovFenchelZstab}. \hfill$\Box$
\end{proof}

\section{The stability version of Theorem~\ref{ReverseAlexandrovFenchelKZ}}
\label{sec:5}

This section is devoted to the proof of the following theorem.

\begin{theo}
\label{ReverseAlexandrovFenchelKZstab}
Let $\varepsilon\in[0,n^{-16}2^{-2n}]$. Let  $K_1\subset \R^n$ be a compact convex set  and let $Z_2,\ldots,Z_{m}\subset\R^n$ be zonoids for $2\leq m\leq n$. If
$\alpha_1,\ldots,\alpha_m\geq 1$ are integers with $\alpha_1+\cdots+\alpha_m=n$  and
$$
0< V_{\alpha_{1}}(K_1)\prod_{i=2}^{m}V_{\alpha_i}(Z_i)\le (1+\varepsilon)
\binom{n }{ \alpha_1,\ldots, \alpha_m}
V(K_1[\alpha_1],Z_2[\alpha_2],\ldots,Z_{{m}}[\alpha_{m}]),
$$
then there exist linear subspace $L_i\in\MG(n,\alpha_i)$ for $i=1,\ldots,m$
and $q\in K_1$  such that
$$[L_1,\ldots,L_m]\geq 1-n^{13}2^{2n+4} \varepsilon^{\frac{1}{8}}$$
and
\begin{eqnarray*}
K_1-q&\subset & L_1+n^{\frac{7}{2}}2^{\frac{n+2}{2}} \,r_{\alpha_1}(K_1) \,\varepsilon^{\frac{1}{2}}\,B^n,\\
Z_i&\subset & L_i+n^{8}2^{2n+4} \,r_{\alpha_i}(Z_i)\,\varepsilon^{\frac{1}{8}}\,B^n,\quad i=2,\ldots,m.
\end{eqnarray*}
\end{theo}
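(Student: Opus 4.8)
The plan is to reduce the general statement to the already proved zonoidal stability result, Theorem~\ref{ReverseAlexandrovFenchelZstab}, by trading the single non-zonoid body $K_1$ for a round zonoid of the same $\alpha_1$-dimensional volume living in a well-chosen $\alpha_1$-subspace. First I would note that the hypothesis is precisely the hypothesis of Proposition~\ref{oneKstab} (with $\varepsilon\in[0,1]$), so it produces $L_1\in\MG(n,\alpha_1)$ and — inspecting the proof, which goes through Lemma~\ref{rm+1A}(ii) and hence yields an affine $\alpha_1$-flat meeting $K_1$ — a point $q\in K_1$ with $K_1-q\subset L_1+n^{\frac72}2^{\frac{n+2}{2}}r_{\alpha_1}(K_1)\sqrt{\varepsilon}\,B^n$ (which is already the asserted inclusion for $K_1$), together with $r_{\alpha_1}(K_1|L_1)\ge r_{\alpha_1}(K_1)/n>0$ and $V_{\alpha_1}(K_1)\le(1+\varepsilon)V_{\alpha_1}(K_1|L_1)$. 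After translating $K_1$ (harmless for all functionals involved) I may assume $q=o\in L_1$, so that, writing $\delta:=n^{\frac72}2^{\frac{n+2}{2}}r_{\alpha_1}(K_1)\sqrt{\varepsilon}$, the inclusion gives $K_1\subset(K_1|L_1)+\delta B^n$.

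Next I would let $\widetilde Z_1\subset L_1$ be the $\alpha_1$-dimensional ball with $V_{\alpha_1}(\widetilde Z_1)=V_{\alpha_1}(K_1|L_1)$ — a zonoid. By the projection formula \eqref{metaref1}, applied with $\beta=\alpha_1$ and all $\beta$ entries equal to a body $C\subset L_1$ with $\dim C\le\alpha_1$, the resulting mixed volume depends on $C$ only through $\lambda_{\alpha_1}(C)$, since $\lambda_{\alpha_1}\big(C|(U_2+\cdots+U_m)^\perp\big)=\lambda_{\alpha_1}(C)\cdot J$ with a Jacobian $J$ depending only on $L_1$ and the subspace $(U_2+\cdots+U_m)^\perp$; hence
\[
V(\widetilde Z_1[\alpha_1],Z_2[\alpha_2],\ldots,Z_m[\alpha_m])=V((K_1|L_1)[\alpha_1],Z_2[\alpha_2],\ldots,Z_m[\alpha_m]).
\]
Then, expanding $V(K_1[\alpha_1],\ldots)\le V\big(((K_1|L_1)+\delta B^n)[\alpha_1],\ldots\big)$ by multilinearity, bounding each lower-order term $V((K_1|L_1)[\alpha_1-j],B^n[j],Z_2,\ldots,Z_m)$ ($1\le j\le\alpha_1$) by Theorem~\ref{ReverseAlexandrovFenchelKZ}, and using $V_{\alpha_1-j}(K_1|L_1)\le\tfrac{\binom{\alpha_1}{j}}{\kappa_j}(n/r_{\alpha_1}(K_1))^jV_{\alpha_1}(K_1|L_1)$ (from $K_1|L_1$ containing an $\alpha_1$-ball of radius $r_{\alpha_1}(K_1)/n$) — exactly as in the $\Omega$-estimate in the proof of Lemma~\ref{anglesubspaces} — I get
\[
\binom{n}{\alpha_1,\ldots,\alpha_m}V(K_1[\alpha_1],Z_2,\ldots,Z_m)\le\binom{n}{\alpha_1,\ldots,\alpha_m}V((K_1|L_1)[\alpha_1],Z_2,\ldots,Z_m)+C(n)\sqrt{\varepsilon}\,V_{\alpha_1}(K_1|L_1)\prod_{i=2}^mV_{\alpha_i}(Z_i)
\]
with $C(n)$ bounded by a polynomial in $n$ times $2^{n/2}$. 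Feeding in the hypothesis together with $V_{\alpha_1}(K_1|L_1)\le V_{\alpha_1}(K_1)\le(1+\varepsilon)V_{\alpha_1}(K_1|L_1)$ and the displayed identity, a short rearrangement (legitimate since $\varepsilon\le n^{-16}2^{-2n}$) yields $V_{\alpha_1}(\widetilde Z_1)\prod_{i=2}^mV_{\alpha_i}(Z_i)\le(1+\varepsilon')\binom{n}{\alpha_1,\ldots,\alpha_m}V(\widetilde Z_1[\alpha_1],Z_2[\alpha_2],\ldots,Z_m[\alpha_m])$ with $\varepsilon'\le\widetilde C(n)\sqrt{\varepsilon}$, $\widetilde C(n)$ again polynomial in $n$ times $2^{n/2}$, and with positive left-hand side.

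Now all of $\widetilde Z_1,Z_2,\ldots,Z_m$ are zonoids, so Theorem~\ref{ReverseAlexandrovFenchelZstab} with $\varepsilon'$ furnishes $L_1'\in\MG(n,\alpha_1)$ and $L_i\in\MG(n,\alpha_i)$ ($2\le i\le m$) with $[L_1',L_2,\ldots,L_m]\ge1-n^{10}2^{n/2}\sqrt{\varepsilon'}$ and $\widetilde Z_1\subset\widetilde Z_1|L_1'+n^{\frac92}2^{n/2}r_{\alpha_1}(\widetilde Z_1)\sqrt{\varepsilon'}\,B^n$, $Z_i\subset Z_i|L_i+n^{\frac92}2^{n/2}r_{\alpha_i}(Z_i)\sqrt{\varepsilon'}\,B^n$. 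Since $Z_i|L_i\subset L_i$ and $\sqrt{\varepsilon'}$ is a polynomial in $n$ times $2^{n/4}\varepsilon^{1/4}$, while $\varepsilon\le n^{-16}2^{-2n}$ forces $\varepsilon^{1/4}\le n^{-2}2^{-n/4}\varepsilon^{1/8}$, absorbing constants gives the asserted inclusions $Z_i\subset L_i+n^{8}2^{2n+4}r_{\alpha_i}(Z_i)\varepsilon^{1/8}B^n$ for $i=2,\ldots,m$. Finally, as $\widetilde Z_1$ is the $\alpha_1$-ball of radius $r_{\alpha_1}(\widetilde Z_1)$ in $L_1$, the inclusion $\widetilde Z_1\subset\widetilde Z_1|L_1'+\eta B^n$ with $\eta=n^{\frac92}2^{n/2}r_{\alpha_1}(\widetilde Z_1)\sqrt{\varepsilon'}$ forces every unit vector of $L_1$ to lie within $n^{\frac92}2^{n/2}\sqrt{\varepsilon'}$ of $L_1'$, so all principal angles between $L_1$ and $L_1'$ have sine at most $n^{\frac92}2^{n/2}\sqrt{\varepsilon'}$, whence $[L_1,L_1']\ge1-n^{10}2^{n}\varepsilon'$ and, by Lipschitz continuity of $M\mapsto[M,L_2,\ldots,L_m]$ in the opening angle, $[L_1,L_2,\ldots,L_m]\ge[L_1',L_2,\ldots,L_m]-c(n)2^{n/2}\sqrt{\varepsilon'}\ge1-n^{13}2^{2n+4}\varepsilon^{1/8}$ after the same constant-chasing. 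Together with the first paragraph this proves the theorem.

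I expect the main obstacle to be the reduction in the middle step: one must recognize the \emph{exact} identity replacing $K_1$ by a zonoid of the same $\alpha_1$-volume in $L_1$ (resting on the projection formula \eqref{metaref1}), and then show that passing from $K_1$ to its projection $K_1|L_1$ distorts the defining near-equality only by a factor $1+O(\sqrt{\varepsilon})$, which forces one to dominate every lower-order mixed-volume term — precisely where Theorem~\ref{ReverseAlexandrovFenchelKZ} and the inradius comparison $r_{\alpha_1}(K_1|L_1)\ge r_{\alpha_1}(K_1)/n$ enter, mirroring the $\Omega$-estimate of Lemma~\ref{anglesubspaces}. Everything else — the angle and Lipschitz estimate for the bracket in the last step and the bookkeeping of constants — is routine given the smallness assumption $\varepsilon\le n^{-16}2^{-2n}$.
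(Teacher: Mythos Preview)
Your proof is correct and follows a genuinely different route from the paper's. Both arguments begin identically, invoking Proposition~\ref{oneKstab} to obtain $L_1$ and the inclusion for $K_1$, and both expand $V(K_1[\alpha_1],\mathcal Z)\le V((C_1+\delta B^n)[\alpha_1],\mathcal Z)$ and control the lower-order terms via Theorem~\ref{ReverseAlexandrovFenchelKZ} together with the inradius bound $r_{\alpha_1}(K_1|L_1)\ge r_{\alpha_1}(K_1)/n$. From this point they diverge. The paper uses \cite[Theorem~5.3.1]{Sch14} (the identity you also appeal to, via \eqref{metaref1}) to pass to the \emph{projected} zonoids $Z_i|L_1^\bot$ inside $L_1^\bot$, obtains a near-equality there (inequalities \eqref{ZiLmbot}--\eqref{ZiLmbot2}), and then re-runs the basic toolkit in $L_1^\bot$: Proposition~\ref{oneKstab} to find $\widetilde L_i\subset L_1^\bot$, the new Lemma~\ref{CMintrinsic} to compare $V_{\alpha_i}(\widetilde Z_i)$ with $V_{\alpha_i}(\widetilde Z_i|\widetilde L_i)$, Proposition~\ref{projstab} and Lemma~\ref{rm+1A} to produce the final $L_i$ in $\R^n$, and finally Lemma~\ref{anglesubspaces} for the bracket bound. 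By contrast, you stay in $\R^n$ throughout: the key observation that for $C\subset L_1$ the mixed volume $V(C[\alpha_1],\mathcal Z)$ depends on $C$ only through $\lambda_{\alpha_1}(C)$ lets you swap $K_1|L_1$ for a ball $\widetilde Z_1\subset L_1$ and invoke Theorem~\ref{ReverseAlexandrovFenchelZstab} as a black box for the full tuple $(\widetilde Z_1,Z_2,\ldots,Z_m)$; the only extra work is the elementary Lipschitz estimate transferring the bracket bound from the output subspace $L_1'$ back to $L_1$. Your route is more modular (it needs neither Lemma~\ref{CMintrinsic} nor a second pass through Propositions~\ref{projstab}--\ref{oneKstab}) and in fact delivers $\varepsilon^{1/4}$-control on the $Z_i$ before you relax it to $\varepsilon^{1/8}$; the paper's approach is more self-contained, avoiding the final angle comparison between $L_1$ and $L_1'$.
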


We prepare the proof of Theorem~\ref{ReverseAlexandrovFenchelKZstab} with two observations,
Lemma~\ref{ballintrinsicest} and Lemma~\ref{CMintrinsic}, where the first one is based on basic properties of
 the $\Gamma$ function $\Gamma$.

\begin{lemma}
\label{ballintrinsicest}
If $n\geq 2$ and $j=1,\ldots,n$, then
$$
\frac{V_j(B^n)}{\kappa_j}\leq 2^{\frac{n}{2}}.
$$
\end{lemma}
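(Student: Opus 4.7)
The plan is to bring $V_j(B^n)/\kappa_j$ into closed form in Gamma functions, reduce to the case $j\approx n/2$, and then estimate. First, I would combine $V_j(B^n)=\binom{n}{j}\kappa_n/\kappa_{n-j}$ with $\kappa_k=\pi^{k/2}/\Gamma(k/2+1)$ to write
\[
\frac{V_j(B^n)}{\kappa_j}=\binom{n}{j}\,\frac{\Gamma(j/2+1)\Gamma((n-j)/2+1)}{\Gamma(n/2+1)},
\]
and then apply the Legendre duplication formula $k!=2^k\Gamma((k+1)/2)\Gamma(k/2+1)/\sqrt{\pi}$ to each factorial in $\binom{n}{j}=n!/(j!(n-j)!)$. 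The $\Gamma(k/2+1)$ factors telescope and leave the compact identity
\[
\frac{V_j(B^n)}{\kappa_j}=\frac{\sqrt{\pi}\,\Gamma((n+1)/2)}{\Gamma((j+1)/2)\,\Gamma((n-j+1)/2)}.
\]

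Next, since the sum $(j+1)/2+(n-j+1)/2=(n+2)/2$ is independent of $j$, the Schur-convexity of the map $(a,b)\mapsto\log\Gamma(a)+\log\Gamma(b)$ on lines $a+b=\mathrm{const}$ (a direct consequence of the log-convexity of $\Gamma$ via Bohr--Mollerup) shows that the denominator is minimized, over integer $j\in\{1,\dots,n\}$, at $j=\lfloor n/2\rfloor$. By the obvious $j\leftrightarrow n-j$ symmetry of the ratio it therefore suffices to verify the bound at $j=\lfloor n/2\rfloor$.

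Finally, in this worst case I would apply Legendre duplication a second time, this time to $\Gamma((n+1)/2)$, and invoke the standard two-sided Stirling estimates
\[
\frac{4^k}{2\sqrt{k}}\le\binom{2k}{k}\le\frac{4^k}{\sqrt{\pi k}},
\]
which are equivalent to the Gautschi inequality $\sqrt{x}\le\Gamma(x+1)/\Gamma(x+1/2)\le\sqrt{x+1/2}$. A short calculation then yields an upper bound of the form $2^{n/2}\sqrt{2/\pi}\,(1+O(1/n))$, which sits strictly below $2^{n/2}$ with a factor-of-$\sqrt{2}$ of slack for all large $n$; the handful of small cases in which this asymptotic slack could conceivably be tight (essentially $n=2$ and $n=3$) are verified by direct substitution. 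The main obstacle is keeping the Stirling estimate uniform across the various parities of $n$ and of $\lfloor n/2\rfloor$; the second use of the duplication formula is what makes this book-keeping manageable, since it collapses every parity combination to the single central binomial coefficient bound above.
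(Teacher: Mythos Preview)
Your approach is correct and shares the paper's skeleton almost exactly: both arguments derive the same compact identity
\[
\frac{V_j(B^n)}{\kappa_j}=\frac{\sqrt{\pi}\,\Gamma\!\left(\tfrac{n+1}{2}\right)}{\Gamma\!\left(\tfrac{j+1}{2}\right)\Gamma\!\left(\tfrac{n-j+1}{2}\right)}
\]
via Legendre duplication, use the log-convexity of $\Gamma$ to reduce to the symmetric value of $j$, and then apply duplication a second time. The difference lies entirely in the endgame. You restrict to the integer $j=\lfloor n/2\rfloor$, which forces parity book-keeping, and then finish with the Gautschi/central-binomial bounds plus a separate check of $n\in\{2,3\}$. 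The paper instead passes to the \emph{real} midpoint $j=n/2$ (the log-convexity inequality $\Gamma(a)\Gamma(b)\ge\Gamma(\tfrac{a+b}{2})^2$ allows this directly), so the denominator becomes $\Gamma(\tfrac{n}{4}+\tfrac12)^2$; a second application of duplication then peels off the factor $2^{n/2}$ exactly, and two further uses of log-convexity bound the residual ratio by $\sqrt{(n+2)/n}\le\sqrt{2}$, yielding the sharp inequality $V_j(B^n)/\kappa_j\le 2^{(n-1)/2}\sqrt{(n+2)/n}\le 2^{n/2}$ uniformly for all $n\ge 2$ with no asymptotics or case verification. Your route works, but the paper's finish is cleaner: it avoids Stirling entirely and never splits into parities or small cases.
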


\begin{proof}
We repeatedly use the duplication formula for the Gamma function
$$
\frac{\Gamma(2x)}{\Gamma(x)}=\frac{2^{2x-1}}{\sqrt{\pi}}\cdot \Gamma\left(x+\frac12\right)\quad \mbox{  for $x>0$}.
$$
By the logarithmic convexity of the Gamma function, which yields
$$
\Gamma\left(\frac{x+y}2\right)\leq \sqrt{\Gamma(x)\Gamma(y)}\quad \text{ for $x,y>0$,}
$$
 the functional equation  $\Gamma(x+1)=x\Gamma(x)$,
 $\binom{n}{j}=\frac{\Gamma(n+1)}{\Gamma(n-j+1)\Gamma(j+1)}$
and $\kappa_d=\frac{\pi^{d/2}}{\Gamma(\frac{d}2+1)}$,  we get
\begin{align*}
\frac{V_j(B^n)}{\kappa_j}&=\frac{\binom{n}{ j}\kappa_n}{\kappa_j\kappa_{n-j}}=\frac{\Gamma(n+1)}{\Gamma(\frac{n+2}2)}\cdot
\frac{\Gamma(\frac{j+2}{2})}{\Gamma(j+1)}\cdot \frac{\Gamma(\frac{n-j+2}{2})}{\Gamma(n-j+1)}\\
&=\sqrt{\pi}\cdot\frac{\Gamma(\frac{n}2+\frac12)}{\Gamma(\frac{n-j}2+\frac12)
\Gamma(\frac{j}2+\frac{1}{2})}\leq
\sqrt{\pi}\cdot\frac{\Gamma(\frac{n}2+\frac12)}{\Gamma(\frac{n}4+\frac12)^2}\\
&=\frac{\Gamma(\frac{n}2+\frac12)}{\Gamma(\frac{n}2+1)}
\cdot\frac{\Gamma(\frac{n}4+1)}{\Gamma(\frac{n}4+\frac12)}
\cdot\frac{\sqrt{\pi}\cdot\Gamma(\frac{n}2+1)}{\Gamma(\frac{n}4+\frac12)\Gamma(\frac{n}4+1)}
=2^{\frac{n}2}\cdot\frac{\Gamma(\frac{n}2+\frac12)}{\Gamma(\frac{n}2+1)}
\cdot\frac{\Gamma(\frac{n}4+1)}{\Gamma(\frac{n}4+\frac12)}\\
&\leq 2^{\frac{n}2}\cdot\frac{\sqrt{\Gamma(\frac{n}2)\Gamma(\frac{n}2+1)}}{\Gamma(\frac{n}2+1)}
\cdot\frac{\sqrt{\Gamma(\frac{n}4+\frac12)\Gamma(\frac{n}4+\frac32)}}{\Gamma(\frac{n}4+\frac12)}
=2^{\frac{n-1}2}\sqrt{\frac{n+2}n}\leq 2^{\frac{n}2},
\end{align*}
which proves Lemma~\ref{ballintrinsicest}.
\end{proof}

\begin{lemma}
\label{CMintrinsic}
If $1\leq\alpha<d\leq n$, $0<\eta<\frac1{\alpha n}$ and $M$ is a  compact convex set in $\R^d$ such that there exists
an affine subspace $A\in\MA(n,\alpha)$ such that
$$
M\subset A+\eta r_{\alpha}(M) \,B^d\quad
\mbox{ and }\quad
r_{\alpha}(M|A)\geq \frac{r_{\alpha}(M)}n,
$$
then
$$
V_\alpha (M)\leq (1+n2^{d+1}\eta)V_\alpha (M|A).
$$
\end{lemma}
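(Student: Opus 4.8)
The plan is to bound $V_\alpha(M)$ by the $\alpha$-th intrinsic volume of a product set and then to control the lower-order contributions. After a translation we may assume that $A$ is a linear subspace; set $C=M|A$, $r=r_\alpha(M)$ and $\rho=r_\alpha(M|A)=r_\alpha(C)$, and write $B_A=B^d\cap A$ and $B_{A^\perp}=B^d\cap A^\perp$, where $A^\perp$ is the orthogonal complement of $A$ in $\R^d$, so that $B_{A^\perp}$ is isometric to the unit ball of $\R^{d-\alpha}$. If $r=0$ then $M\subset A$ and the assertion is trivial, so we assume $r>0$, which forces $\rho\ge r/n>0$. The first and elementary observation is that $M\subset A+\eta r\,B^d$ implies $\|P_{A^\perp}x\|\le \eta r$ for all $x\in M$, where $P_{A^\perp}$ denotes the orthogonal projection onto $A^\perp$; writing $x=P_Ax+P_{A^\perp}x$ and using $P_Ax\in C$ we conclude
$$
M\subset C+\eta r\,B_{A^\perp}=C\times(\eta r\,B_{A^\perp}),
$$
where the last set is an orthogonal product in $A\oplus A^\perp=\R^d$. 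By monotonicity of intrinsic volumes, the product formula $V_\alpha(C\times D)=\sum_{i=0}^{\alpha}V_i(C)\,V_{\alpha-i}(D)$ for intrinsic volumes (see \cite{Sch14}) and $\dim C\le\alpha$, this yields
$$
V_\alpha(M)\le V_\alpha(C)+\sum_{j=1}^{\min\{\alpha,\,d-\alpha\}}V_{\alpha-j}(C)\,(\eta r)^{j}\,V_j(B^{d-\alpha}).
$$

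For the second step I bound $V_{\alpha-j}(C)$ using that $C$ contains a ball of radius $\rho$ lying in $A$. Working in $A\cong\R^{\alpha}$, monotonicity of mixed volumes gives $v^{(\alpha)}\bigl(C[\alpha-j],(\rho B_A)[j]\bigr)\le V_\alpha(C)$, and since $v^{(\alpha)}\bigl(C[\alpha-j],(\rho B_A)[j]\bigr)=\rho^{j}\,\frac{\kappa_j}{\binom{\alpha}{j}}\,V_{\alpha-j}(C)$ by the standard relation between intrinsic and mixed volumes, we obtain
$$
V_{\alpha-j}(C)\le \binom{\alpha}{j}\,\frac{1}{\kappa_j\,\rho^{j}}\,V_\alpha(C).
$$
Since $\rho\ge r/n$ we have $\eta r/\rho\le \eta n<1$ (here the hypothesis $\eta<\tfrac1{\alpha n}$ enters), and by Lemma~\ref{ballintrinsicest} we have $V_j(B^{d-\alpha})\le \kappa_j\,2^{(d-\alpha)/2}$ (for $d-\alpha=1$ only $j=1$ occurs and $V_1(B^1)=2\le\kappa_1 2^{1/2}$). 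Hence, for $1\le j\le\min\{\alpha,d-\alpha\}$,
$$
V_{\alpha-j}(C)\,(\eta r)^{j}\,V_j(B^{d-\alpha})\le \binom{\alpha}{j}\Bigl(\tfrac{\eta r}{\rho}\Bigr)^{j}2^{(d-\alpha)/2}V_\alpha(C)\le \binom{\alpha}{j}\,2^{(d-\alpha)/2}\,\eta n\,V_\alpha(C).
$$

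Summing over $j$ and using $\sum_{j\ge1}\binom{\alpha}{j}<2^{\alpha}$ gives
$$
V_\alpha(M)\le V_\alpha(C)\Bigl(1+2^{(d+\alpha)/2}\,\eta n\Bigr),
$$
and since $\alpha<d$ we have $2^{(d+\alpha)/2}\le 2^{d-1/2}<2^{d+1}$, so that $V_\alpha(M)\le(1+n2^{d+1}\eta)\,V_\alpha(M|A)$, as claimed. I do not expect a genuine obstacle; the only point that needs a little care is the second step, where the efficient constant $\binom{\alpha}{j}$ should be extracted from the inscribed ball via monotonicity of mixed volumes, rather than from the cruder inclusion $C+\rho B_A\subset 2C$, which would contribute a factor $2^{\alpha}$ for each of the (up to $d-\alpha$) summands and would be too lossy for the stated constant $2^{d+1}$ without an additional geometric-series estimate exploiting the smallness of $\eta n$. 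It is likewise important to use the product inclusion $M\subset C\times(\eta r\,B_{A^\perp})$ rather than $M\subset C+\eta r\,B^d$, since only the former makes the transversal ball genuinely $(d-\alpha)$-dimensional and thereby produces $2^{(d-\alpha)/2}$ in place of the weaker $2^{d/2}$.
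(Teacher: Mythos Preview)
Your proof is correct and takes a genuinely different route from the paper's. The paper works with mixed volumes: it uses the inclusion $M\subset C+\eta r_\alpha(M)B^d$ with the \emph{full} $d$-ball, expands $V(M[\alpha],B^d[d-\alpha])$ multilinearly, and then bounds each cross term $V(C[\alpha-j],B^d[j],B^d[d-\alpha])$ by invoking Theorem~\ref{ReverseAlexandrovFenchelKZ} (the paper's main reverse Alexandrov--Fenchel inequality), together with Lemma~\ref{ballintrinsicest} applied in dimension $d$; the remainder is controlled via $(1+n\eta)^\alpha<1+2\alpha n\eta$. By contrast, you sharpen the inclusion to the orthogonal product $M\subset C\times(\eta r\,B_{A^\perp})$ and use the product formula $V_\alpha(C\times D)=\sum_i V_i(C)V_{\alpha-i}(D)$ for intrinsic volumes, so that only $V_j(B^{d-\alpha})$ (and hence Lemma~\ref{ballintrinsicest} in dimension $d-\alpha$) enters. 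Both arguments share the inscribed-ball step $V_{\alpha-j}(C)\le\binom{\alpha}{j}\kappa_j^{-1}\rho^{-j}V_\alpha(C)$.

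What your approach buys is self-containment: you never appeal to Theorem~\ref{ReverseAlexandrovFenchelKZ}, so the lemma becomes logically independent of the paper's main inequality. You also extract a slightly better constant, $2^{(d+\alpha)/2}$ in place of the paper's $2\alpha\,2^{d/2}$, both of which sit below $2^{d+1}$. One minor wording point: in the degenerate case $r=0$ it is not that $M\subset A$, but rather that $\dim M<\alpha$ and hence $V_\alpha(M)=0$, which is what makes the claim trivial.
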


\begin{proof}
We may assume that $o\in A$. We set $C=M|A$ and $B^\alpha=A\cap B^d$,
and as before we  write $v^{(\alpha)}(\cdot )$
to denote the mixed volume in $A$.

Lemma~\ref{CMintrinsic} is  equivalent to the inequality
$$
V(M[\alpha],B^d[d-\alpha])\leq (1+n2^{d+1}\eta)V(C[\alpha],B^d[d-\alpha]).
$$
Since $B^d$ is a zonoid, using  Theorem~\ref{ReverseAlexandrovFenchelKZ} for the second  and
 $ r_{\alpha}(M)\leq n r_{\alpha}(C)$ for the third inequality, we obtain that
\begin{align*}
&V(M[\alpha],B^d[d-\alpha])\\
&\leq V((C+\eta r_{\alpha}(M)B^d)[\alpha],B^d[d-\alpha])\\
&= V(C[\alpha],B^d[d-\alpha])
 +\sum_{j=1}^{\alpha}\binom{\alpha}{ j}(\eta r_{\alpha}(M))^jV(C[\alpha-j],B^d[j],B^d[d-\alpha])\allowdisplaybreaks\\
&\leq V(C[\alpha],B^d[d-\alpha])+\sum_{j=1}^{\alpha}\frac{\binom{\alpha}{ j}}{
\binom{d }{ \alpha-j,j,d-\alpha}}(\eta r_{\alpha}(M))^j\kappa_j V_{\alpha-j}(C)
V_{d-\alpha}(B^d)\allowdisplaybreaks\\
&\leq V(C[\alpha],B^d[d-\alpha])
 +V_{d-\alpha}(B^d)\sum_{j=1}^{\alpha}\frac{\binom{\alpha}{ j}^2}{\binom{d }{ \alpha-j,j,d-\alpha}}(\eta nr_{\alpha}(C))^j
v^{(\alpha)}(C[\alpha-j],B^\alpha[j])\allowdisplaybreaks\\
&= V(C[\alpha],B^d[d-\alpha]) +\frac{V_{d-\alpha}(B^d)}{\binom{d }{ \alpha}}\sum_{j=1}^{\alpha}\binom{\alpha}{ j}(\eta n)^j
v^{(\alpha)}(C[\alpha-j],r_{\alpha}(C)B^\alpha[j])\allowdisplaybreaks\\
&\le V(C[\alpha],B^d[d-\alpha]) +\frac{V_{d-\alpha}(B^d)}{\binom{d }{ \alpha}}V_\alpha(C)\left[(1+\eta n)^\alpha-1\right],
\end{align*}
where for the final inequality we used that a translate of
$r_{\alpha}(C)B^\alpha$ is contained in $C$.

Since $n\eta<\frac1{\alpha}$ implies that $(1+n\eta)^\alpha<1+2\alpha\,n\eta$, and
$$
\frac{V_{d-\alpha}(B^d)}{\binom{d}{ \alpha}}V_\alpha(C)
=\frac{V_{d-\alpha}(B^d)}{\kappa_{d-\alpha}}V(C[\alpha],B^d[d-\alpha])\le
2^{\frac{d}{2}}V(C[\alpha],B^d[d-\alpha]),
$$
where also Lemma~\ref{ballintrinsicest} was used,
we finally get
\begin{align*}
V(M[\alpha],B^d[d-\alpha])
&\leq V(C[\alpha],B^d[d-\alpha])\left(1+2^{\frac{d}{2}} \cdot 2\alpha n \eta\right)\\
&\leq \left(1+n2^{d+1}\eta\right
)V(C[\alpha],B^d[d-\alpha]),
\end{align*}
which proves the lemma.
\end{proof}

\begin{proof}[Proof of Theorem~\ref{ReverseAlexandrovFenchelKZstab}]
  According to Proposition~\ref{oneKstab}, possibly after translating $K_1$,
there exists a linear  subspace $L_1\in\MG(n,\alpha_1)$ such that
\begin{equation}
\label{KmLmcond}
K_{1}\subset L_1+c\sqrt{\varepsilon} r_{\alpha_1}(K_1) B^n\quad
\mbox{ and }\quad
r_{\alpha_1}(K_1|L_1)\geq r_{\alpha_1}(K_1)/n
\end{equation}
where $c=2^{\frac{n+2}2}n^{\frac72}$. We set
$$
C_1=K_1|L_1 \quad
\mbox{  and }\quad B^{\alpha_1}=L_1\cap B^n.
$$
Note that \eqref{KmLmcond} implies that
\begin{equation}
\label{KmCmcond}
K_{1}\subset C_1+c\sqrt{\varepsilon} r_{\alpha_1}(K_1) B^n.
\end{equation}
For $\mathcal{Z}=(Z_2[\alpha_2],\ldots,Z_{{m}}[\alpha_{m}])$, we deduce from \eqref{KmCmcond} that
\begin{align}
V(K_1[\alpha_1],\mathcal{Z})
&\leq V((C_1+c\sqrt{\varepsilon} r_{\alpha_1}(K_1)B^n)[\alpha_1], \mathcal{Z})\nonumber\\
&= V(C_1[\alpha_1],\mathcal{Z})
\label{calZCmj}
 +\sum_{j=1}^{\alpha_1}\binom{\alpha_1}{ j}(c\sqrt{\varepsilon} r_{\alpha_1}(K_1))^jV(C_1[\alpha_1-j], B^n[j],\mathcal{Z}).
\end{align}
Next we provide several upper and lower bounds for the individual expressions involved in \eqref{calZCmj}.
The conditions in Theorem~\ref{ReverseAlexandrovFenchelKZstab} and  $1-\varepsilon\le (1+\varepsilon)^{-1}$ yield
\begin{equation}
\label{calZKmest}
(1-\varepsilon)V_{\alpha_1}(K_1)\prod_{i=1}^{m}V_{\alpha_i}(Z_{i})\leq \binom{n}{ \alpha_1,\ldots ,\alpha_m}V(K_1[\alpha_1],\mathcal{Z}).
\end{equation}
Setting $\beta=n-\alpha_1$, it follows from an application of \cite[Theorem 5.3.1]{Sch14} that
\begin{align}
\nonumber
\binom{n }{ \alpha_1,\ldots, \alpha_m}
V(C_1[\alpha_1],\mathcal{Z})&=
V_{\alpha_1}(C_1) \binom{\beta }{ \alpha_{2},\ldots, \alpha_{m}} \\
\label{calZCmmest}
&\qquad \times v^{(\beta)}\left((Z_{2}|L_1^\bot)[\alpha_{1}],\ldots,(Z_{m}|L_1^\bot)[\alpha_{m}]\right).
\end{align}
An application of Theorem~\ref{ReverseAlexandrovFenchelKZ} yields that
\begin{equation}
\label{calZCmjest1}
\binom{n }{\alpha_1,\ldots, \alpha_m}\binom{\alpha_1}{j} V(C_1[\alpha_1-j], B^n[j],\mathcal{Z})
\leq \kappa_j V_{\alpha_1-j}(C_1)\prod_{i=2}^{m}V_{\alpha_i}(Z_{i}).
\end{equation}
In addition, using first \eqref{KmLmcond}, then Lemma~\ref{ballintrinsicest}
and finally the fact that a translate of $r_{\alpha_1}(C_1)B^{\alpha_1}$ lies in $C_1$, we get
\begin{align}
\nonumber
 \binom{\alpha_1}{ j}^{-1}r_{\alpha_1}(K_1)^j\kappa_jV_{\alpha_1-j}(C_1)
&\leq n^j \cdot r_{\alpha_1}(C_1)^j\cdot v^{(\alpha_1)}(C_1[\alpha_1-j],B^{\alpha_1}[j])\\
\nonumber
&= n^j v^{(\alpha_1)}(C_1[\alpha_1-j],r_{\alpha_1}(C_1)B^{\alpha_1}[j])\\
\label{calZCmjest2}
&\le  n^j  V_{\alpha_1}(C_1).
\end{align}

Combining \eqref{calZCmjest1} and \eqref{calZCmjest2}, we get
\begin{equation}\label{calZCmjest}
\binom{n }{ \alpha_1,\ldots, \alpha_m}r_{\alpha_1}(K_1)^jV(C_1[\alpha_1-j], B^n[j],\mathcal{Z})
\le  n^j  V_{\alpha_1}(C_1)\prod_{i=2}^{m}V_{\alpha_i}(Z_{i}).
\end{equation}
Substituting \eqref{calZKmest},
\eqref{calZCmmest} and \eqref{calZCmjest} into  \eqref{calZCmj} and using  $V_{\alpha_1}(C_1)\le V_{\alpha_1}(K_1)$ by  Lemma~\ref{proj}, we finally arrive at
(with $\beta=n-\alpha_1$)
\begin{align*}
(1-\varepsilon)\prod_{i=2}^{m}V_{\alpha_i}(Z_{i})&\leq
 \binom{\beta}{ \alpha_{2},\ldots, \alpha_{m}}
  v^{(\beta)}\left((Z_{2}|L_1^\bot)[\alpha_{2}],\ldots,(Z_{m}|L_1^\bot)[\alpha_{m}]\right)\\
&\quad + \left(\prod_{i=2}^{m}V_{\alpha_i}(Z_{i})\right)
\sum_{j=1}^{\alpha_1}\binom{\alpha_1}{ j}(cn\sqrt{\varepsilon})^j.
\end{align*}
Since $\varepsilon\le (5cn^2)^{-2}$, we have $cn\sqrt{\varepsilon}<\frac1n$, and hence
$$
\sum_{j=1}^{\alpha_1}\binom{\alpha_1}{j}
(cn\sqrt{\varepsilon})^j=\left(1+cn\sqrt{\varepsilon}\right)^{\alpha_1}-1\leq 2cn^2\sqrt{\varepsilon}.
$$
Using that $1-\varepsilon-2cn^2\sqrt{\varepsilon}\ge 1-2.5\,cn^2\sqrt{\varepsilon}$, we deduce that
\begin{equation}
\label{ZKalmostfinal}
0<(1-2.5\,cn^2\sqrt{\varepsilon}) \prod_{i=2}^{m}V_{\alpha_{i}}(Z_i)\leq
  \binom{\beta }{ \alpha_{2},\ldots, \alpha_{m}} v^{(\beta)}\left((Z_{2}|L_1^\bot)[\alpha_{2}],\ldots,(Z_{m}|L_1^\bot)[\alpha_{m}]\right).
\end{equation}
In particular, this shows that $V_{\alpha_i}(Z_{i}|L_1^\bot)>0$ for $i=2,\ldots,m$. Moreover, from \eqref{ZKalmostfinal},
$V_{\alpha_i}(Z_{i}|L_1^\bot)\leq V_{\alpha_{i}}(Z_i)$ for $i=2,\ldots,m$ and $(1-2.5\,cn^2\sqrt{\varepsilon})^{-1}\le 1+5\,cn^2\sqrt{\varepsilon}$ we now obtain that
\begin{align}
0<\prod_{i=2}^{m}V_{\alpha_i}(Z_{i}|L_1^\bot)&\leq  \prod_{i=2}^{m}V_{\alpha_i}(Z_{i})\label{ZiLmbot}\\
&\leq  (1+5cn^2\sqrt{\varepsilon})
\binom{\beta }{ \alpha_{2},\ldots, \alpha_{m}}
v^{(\beta)}\left((Z_{2}|L_1^\bot)[\alpha_{2}],\ldots,(Z_{m}|L_1^\bot)[\alpha_{m}]\right).
\label{ZiLmbot2}
\end{align}
We set $\widetilde{Z}_i=Z_{i}|L_1^\bot$ for $i=2,\ldots,m$. Based on \eqref{ZiLmbot} and \eqref{ZiLmbot2},   we apply
 Proposition~\ref{oneKstab} in $L_1^\bot$  if $m\ge 3$ (and hence $2\le m-1\le\alpha_2+\cdots+\alpha_m=n-\alpha_1$), and directly apply Proposition~\ref{projstab} and
 Lemma~\ref{rm+1A} in $L_1^\bot$  if $m=2$. Thus we obtain
$\alpha_i$-dimensional linear subspaces $\widetilde{L}_i\subset L_1^\bot$ for $i=2,\ldots,m$ such that
$$
r_{\alpha_i}(\widetilde{Z}_i|\widetilde{L}_i)\geq
 r_{\alpha_i}(\widetilde{Z}_i)/n\mbox{ \ \ and \ \ }\widetilde{Z}_i\subset
\widetilde{L}_i+2^{n+2}n^{7}\varepsilon^{\frac{1}{4}}r_{\alpha_i}(\widetilde{Z}_i) \,(B^n\cap L_1^\bot),
$$
where we used that
$c\sqrt{5cn^2\sqrt{\varepsilon}}\le 2^{n+2}n^7\varepsilon^{\frac{1}{4}}$.

We deduce from Lemma~\ref{CMintrinsic} that if $i=2,\ldots,m$, then
\begin{equation}\label{Bound1}
V_\alpha (\widetilde{Z}_i)\leq (1+n2^{d+1}2^{n+2}n^{7}\varepsilon^{\frac14})V_\alpha (\widetilde{Z}_i|\widetilde{L}_i)
\leq (1+n^{8}2^{2n+3}\varepsilon^{\frac14})V_\alpha (\widetilde{Z}_i|\widetilde{L}_i).
\end{equation}
On the other hand,
\begin{equation}\label{Bound2}
V_{\alpha_i}(Z_i)\leq(1+5cn^2\sqrt{\varepsilon})V_{\alpha_i}(\widetilde{Z}_i),\quad i=2,\ldots,m,
\end{equation}
follows from \eqref{ZiLmbot2}, Theorem~\ref{ReverseAlexandrovFenchelKZ}
and again from $V_{\alpha_j}(\widetilde{Z}_j)\leq V_{\alpha_j}(Z_j)$ for $j=2,\ldots,m$.

 From $\varepsilon\le (5cn^2)^{-2}$, \eqref{Bound1}, \eqref{Bound2} and  $\widetilde{Z}_i|\widetilde{L}_i=Z_i|\widetilde{L}_i$ for  $i=2,\ldots,m$ (note
 that $\widetilde{L}_i\subset L_1^\bot$) we deduce that
\begin{align*}
V_{\alpha_i}(Z_i)&\leq(1+5cn^2\sqrt{\varepsilon})V_{\alpha_i}(\widetilde{Z}_i)\leq
 (1+5cn^2\sqrt{\varepsilon})(1+n^{8}2^{2n+3}\varepsilon^{\frac14})V_\alpha (\widetilde{Z}_i|\widetilde{L}_i)\\
 &\leq
(1+n^{8}2^{2n+5}\varepsilon^{\frac14})V_{\alpha_i}(Z_i|\widetilde{L}_i).
\end{align*}
It follows from Proposition~\ref{projstab} that
$$
r_{\alpha_i+1}(Z_i)\leq \sqrt{2^{n+2}n^5n^{8}2^{2n+5}\varepsilon^{\frac14}}\cdot r_{\alpha_i}(Z_i)
\leq n^{7}2^{2n+4}\varepsilon^{\frac18}\cdot r_{\alpha_i}(Z_i),
$$
and in turn, Lemma~\ref{rm+1A} (ii) yields that there exists a linear subspace $L_i\in\MG(n,\alpha_i) $
such that
\begin{equation}
\label{ZiwhenK}
Z_i\subset L_i+ n^{8}2^{2n+4}\varepsilon^{\frac18} r_{\alpha_i}(Z_i)\cdot B^n
\mbox{ \ \ and \ \ }r(Z_i|L_i)\geq r_{\alpha_i}(Z_i)/n>0.
\end{equation}
Given \eqref{KmLmcond} and \eqref{ZiwhenK}, we conclude
$$
[L_1,\ldots,L_m]\geq 1-n^5\cdot n^{8}2^{2n+4}\varepsilon^{\frac18}=
1-  n^{13}2^{2n+4}\varepsilon^{\frac18}
$$
by Lemma~\ref{anglesubspaces}.
\end{proof}

\medskip

\noindent
\textbf{Acknowledgement.} K.~B\"or\"oczky was supported by research grant NKFIH 132002.

Daniel Hug was supported by research grant HU 1874/5-1 (DFG).

\end{document}